\newtheorem{theorem}{Theorem}[section]
\newtheorem{assumption}{Assumption}[section]
\newtheorem{corollary}[theorem]{Corollary}
\newtheorem{lemma}[theorem]{Lemma}
\newtheorem{proposition}[theorem]{Proposition}
\theoremstyle{definition}
\theoremstyle{remark}
\newtheorem{remark}{Remark}[section]
\newcommand{\highlight}{}
\newcommand{\mymark}{}   
\newcommand{\ves}{{\rm vec} } 
\newcommand{\vecs}{{\rm vech} }
\newcommand{\TheTitle}{Continuous-Time Robust Dynamic Programming} 
\newcommand{\TheAuthors}{T. Bian and Z. P. Jiang}
\title{\TheTitle}
\thanks{The opinions expressed in this paper are those of the authors and do not necessarily reflect the views and policies of Bank of America Merrill Lynch.
}
\author{Tao Bian}
\address[T. Bian]{Bank of America Merrill Lynch, One Bryant Park, New York, NY 10036}
\email[T. Bian]{tbian@nyu.edu}
\author{Zhong-Ping Jiang}
\address[Z. P. Jiang]{Control and Networks Lab, Department of Electrical and Computer Engineering, Tandon School of Engineering, New York University, 5 Metrotech Center, Brooklyn, NY 11201}
\email[Z. P. Jiang]{zjiang@nyu.edu}
\begin{document}

\maketitle

\begin{abstract}
This paper presents a new theory, known as robust dynamic programming, for a class of continuous-time dynamical systems.
Different from traditional dynamic programming (DP) methods, this new theory serves as a fundamental tool to analyze the robustness of DP algorithms, and in particular, to develop novel adaptive optimal control and reinforcement learning methods. 
In order to demonstrate the potential of this new framework, four illustrative applications in the fields of stochastic optimal control and adaptive DP are presented.
Three numerical examples arising from both finance and engineering industries are also given, along with several possible extensions of the proposed framework.
\end{abstract}



\tableofcontents

\section{Introduction}
In 1952, Bellman proposed the original idea of dynamic programming (DP) \cite{Bellman1952} to solve a class of optimization problems subject to a controlled process that is usually described by a Markov decision process (MDP), a difference equation, or a differential equation.
Over the past several decades, DP and its extensions \cite{Puterman2005, Bertsekas2005, Bertsekas2007, Bertsekas2013} have attracted a significant amount of attention, because of the vital role they have played in several popular fields including reinforcement learning (RL) \cite{Sutton1998,Littman2015,Barto2017}, finance \cite{Merton1971, Tsitsiklis1999a, Pham2009, Garleanu2016}, and biological control \cite{Kleinman1970, Todorov2005}, to name a few.
Depending on the form (discrete-time vs. continuous-time) used to describe the dynamical system in question, DP problems can be solved by finding the solution to either the Bellman equation or the Hamilton-Jacobi-Bellman (HJB) equation.
However, due to the complex nature of these equations, the optimal solution cannot be obtained analytically in most cases, and numerous  methods including policy iteration (PI) \cite{Howard1960, Kleinman1968, Beard1995, Bian2014} and value iteration (VI) \cite{Bellman1957a, Bertsekas2017a, Bian2016b, Bian2016f} have been developed to approximate the solutions of these equations. 
Unfortunately, these algorithms suffer from serious usage limitations, due to the limited information available and the presence of various types of disturbance in practical problems.
Nevertheless, from a control theory  point of view, we identify two perspectives to address these issues.
The first one, which we refer to as the ``adaptive control perspective", aims at learning the unknown components in DP algorithms directly from available online/offline data. 
Based on the problem formulation, such unknown component can be the Q-factor, the policy gradient, and the policy function.
Indeed, the majority of existing adaptive optimal control and DP methods \cite{Bertsekas1996a, Si2004, Powell2007, Lewis2013, Jiang2017} falls into this category, and RL is considered as a machine learning reinterpretation of direct adaptive control \cite{Sutton1992}.
The main advantage of these methods is that they are effective in tackling the presence of static uncertainties such as the unknown parameters in the DP algorithm.
As a result, this allows the DP problem to be solved without directly using the knowledge of the underlying system (also known as the environment in RL), i.e., the optimal solution is obtained in a model-free manner.
In spite of its popularity, the adaptive control perspective is not effective in tackling the presence of dynamic uncertainties \cite{Liu2014} in DP algorithms. Such dynamic uncertainty may be caused by coupling the standard DP algorithm with other numerical algorithms, where each of these algorithms then serves as a dynamic uncertainty to the DP algorithm. It may also arise from the decentralized DP problem, where each node in a large-scale network executes its own version of the DP algorithm, and interacts with its neighbors through the outputs and inputs. The algorithm executed in its neighboring nodes can be considered as dynamic uncertainty to the node itself. Existing learning-based DP algorithms are not directly applicable to handle this type of disturbances. 

The second perspective, which we refer to as the ``robust control perspective", aims at strengthening the DP algorithm so that it is robust to the presence of disturbance.
A remarkable feature of this type of methods is that it is effective in dealing with both static and dynamic uncertainties.
Unlike the adaptive control persepctive, the development in this direction is still rudimentary.
Only a few results \cite{Iyengar2005,Nilim2005,Lim2013} are available for  solving DP and RL problems along this track, in which the authors still only considered the static uncertainty caused by the unknown transition probability measures.
Besides, these methods are only available for MDPs.
In other words, there is no robust DP solution for dynamical systems described by differential equations.
As a result, it is still an open problem how to develop DP algorithms that are robust to both static and dynamic uncertainties.
 

In this paper, we propose a novel robust DP theory for continuous-time linear dynamical systems.
Compared with traditional DP and adaptive optimal control, we take a completely different path to investigate DP methods from a viewpoint of nonlinear system theory \cite{Khalil2002} and small-gain theory \cite{Zames1966,Jiang1994}.
As a consequence, we will provide a complete robustness analysis on the DP algorithm, under multiple types of uncertainties, including external disturbance,  dynamic uncertainty, and  stochastic noise, that cannot be dealt with by previously known results.
The proposed robust DP framework is based on the dynamic property of differential matrix Riccati equation (DMRE).
Recall from \cite{Willems1971, Kucera1973} that under observability and stabilizability assumptions, the unique symmetric positive definite solution to the algebraic Riccati equation (ARE) is asymptotically stable for the DMRE, backward in time.
In \Cref{RVI}, we further improve this result by showing that the DMRE also admits a linear $L^2$ gain \cite{Schaft2017} for any arbitrarily large set of initial conditions within the region of attraction, which we will refer to as ``semiglobal gain assignment''.
This conclusion lays the foundation of our small-gain analysis on the continuous-time VI, which in turn leads to a sequence of convergence and robustness results.
A comparison between different DP methods is given in \cref{tab0}.
We admit that one drawback of robust DP is that it requires the nominal value of the components in the algorithm, and hence is not model-free as in existing RL and adaptive optimal control methods.
This drawback can be easily conquered by combining our robust DP with existing adaptive optimal control results.

\begin{table}
\centering
\caption{Comparison between Applications of Different DP Methods}
\begin{tabular}{l |c |c | c}
\cline{1-4}
Application scenarios&DP &   Adaptive DP & Robust DP \\
\cline{1-4}
Ideal case       & Yes                        &   Yes & Yes\\
Static uncertainty       & No                        &   Yes & Yes\\
Dynamic uncertainty & No                     &  No & Yes \\
Model free                & No                      &  Yes & No \\
\cline{1-4}
\end{tabular}
\label{tab0}
\end{table}

To demonstrate the power of the proposed method, in \Cref{sec_applicaiton}, we apply robust DP to solve four classical problems  arising from the field of adaptive optimal control. 
In the first application, we show that the continuous-time VI can be implemented  with system matrices estimated iteratively from a time series. 
The estimation error is treated as an external disturbance, and the convergence of VI is proved via robust DP theory.
In the second application, an improved version of the continuous-time adaptive dynamic programming (ADP) \cite{Bian2016b} is proposed by coupling the recursive least square (RLS) estimation of certain matrix inverse in the ADP learning process.
Robust DP is used to tackle the presence of RLS error.
Compared with existing results, the proposed ADP algorithm is more computationally efficient, as the estimate of the matrix inverse is updated together with the ADP learning.
In the third application, we develop a continuous-time stochastic ADP theory for a class of ergodic control problems, that generalizes the main result of \cite{Bian2016a}.
Different from the stochastic approximation \cite{Kushner2003} and Monte Carlo methods \cite[Chapter 5]{Sutton1998} in traditional RL, a new method for convergence analysis based on robust DP is proposed in the continuous-time setting, due to the complex nature of the continuous-time ergodic control problem.
In our fourth and last application, we propose a novel decentralized VI algorithm for solving coupled AREs.
The state-space based small-gain theory \cite{Jiang1994} is applied with our robust DP framework to provide a sufficient  condition for the convergence analysis of coupled AREs.
This result is especially useful in developing algorithms for robust ADP \cite{Jiang2013} and solving non-zero-sum differential games \cite{Starr1969,Starr1969a}.

To further illustrate the proposed result, we also give three practical simulation examples in \Cref{simulate}.



{\it Notation:} 
Throughout this paper, 
$I_n$ denotes the identity matrix of dimension $n$.  
$\mathbb{R}$ and $\mathbb{R}_+$ denote the set of real numbers and the set of nonnegative real numbers, respectively.
$\mathbb{Z}_+$ denotes the set of nonnegative integers.
$|\cdot|$ denotes the Euclidean norm for vectors, or the induced matrix norm for matrices. 
$\mathcal{S}^n$ denotes the normed space of all  $n$-by-$n$ real symmetric matrices, equipped with the induced matrix norm.
$\mathcal{S}^n_+=\{P\in\mathcal{S}^n:P>0\}$.
For a matrix $M\in\mathbb{R}^{n\times m}$, 
$\ves(M)=[M_1^T,M_2^T, \cdots, M_m^T]^T$, where $M_i\in\mathbb{R}^n$ is the $i$-th column of $M$. 
For any $M\in\mathcal{S}^{n}$, denote $\lambda_m(M)$ and $\lambda_M(M)$ as the minimum and maximum eigenvalues of $M$, respectively; and
$\vecs(M)=[M_{11},M_{12},\cdots,M_{1n},M_{22},M_{23},\cdots, M_{(n-1)n},M_{nn}]^T$,
where $M_{ij}\in\mathbb{R}$ is the $(i,j)$-th element of matrix $M$.
$ \langle \cdot,\cdot\rangle_F$ denotes the Frobenius inner product.
$\otimes$ and $\oplus$ indicate the Kronecker product and Kronecker sum, respectively. 
Given a set $Q$, ${\rm int}(Q)$ denotes the interior of $Q$.
$B_\varepsilon$ denotes an open ball centered at the origin with radius $\varepsilon$.
A  function $f: Q\rightarrow \mathbb{R}_+$, where $Q\subseteq\mathbb{R}^n$ and $0\in Q$, is called positive definite, if $f(x)>0$ for all $x\in Q\setminus\{0\}$, and $f(0)=0$.
{\mymark For $f: \mathbb{R}\rightarrow \mathbb{R}_+$ and $g: \mathbb{R}\rightarrow \mathbb{R}_+$, denote $f(x) = o(g(x))$ if $\lim_{x\rightarrow 0}f(x)/g(x)=0$.}

\section{Preliminaries}\label{sec2}

\subsection{System description}\label{MP_SD}

Consider the following linear time-invariant system:
\begin{align}
\dot x=Ax+Bu,\label{sys_equ1}
\end{align}
where $x\in \mathbb{R}^n$ is the system state,  $u\in\mathbb{R}^m$ is the control input, and $A\in\mathbb{R}^{n\times n}$ and $B\in\mathbb{R}^{n\times m}$ are system matrices.
 Assume $(A,B)$ is stabilizable.

Denote the cost corresponding to system \cref{sys_equ1} as 
\begin{align}
\mathcal{J}(x(0);u)=\int_0^{\infty}(x^TQx+u^TRu)ds, \label{cost_equ1}
\end{align}
where $Q=Q^T\geq0$, $R=R^T>0$, and $(A,Q^{1/2})$ is observable.
It is well known that $\mathcal{J}$ is minimized under the optimal controller $u^*=-K^*x$, where $K^*=R^{-1}B^TP^*$, with $P^*$ the unique symmetric positive definite solution to the following ARE:
\begin{align}
0=A^TP^*+P^*A-P^*BR^{-1}B^TP^*+Q.\label{are_equ1}
\end{align}
Moreover, $A-BK^*$ is Hurwitz.


\subsection{DMRE and continuous-time VI}\label{DMREVI}
Since \cref{are_equ1} is a nonlinear matrix equation, it is not easy to solve $P^*$ from the ARE directly.
One way of finding $P^*$ is to use the continuous-time VI \cite{Bian2016b}.
Before introducing the VI algorithm, we define
a real sequence $\{h_k\}_{k=0}^\infty$ satisfying
\begin{align*}
h_k>0,\quad \lim_{k\rightarrow \infty}h_k=0,\quad \sum_{k=0}^\infty h_k=\infty.
\end{align*}
In addition, denote $\{B_q\}_{q=0}^\infty$ as an increasing real sequence with $B_0>0$ and $\lim_{q\rightarrow\infty}B_q=\infty$.

The continuous-time VI is recalled from \cite{Bian2016b} and shown in  \cref{alg0}. 
Note that if $Q>0$, then the initial choice on $P_0$ can be relaxed to $P_0=P_0^T\geq0$.
Detailed convergence analysis on \cref{alg0}, and its extensions to model-free adaptive optimal controller design, can be found in  \cite{Bian2016b}. 
However, it still remains an open problem how robust  \cref{alg0} is to various types of disturbance.
 As shown in subsequent sections, we will provide the first solution to this fundamentally challenging issue for continuous-time dynamical systems.
 
\begin{algorithm}[t]
\caption{Continuous-time value iteration}
\label{alg0}
\begin{algorithmic}
\STATE Choose $P_0=P_0^T> 0$. Let $\bar\varepsilon>0$ be a small threshold. $k, q\gets 0$.
\LOOP
    \STATE $ P_{k+1/2}\gets P_k+h_k(A^TP_k+P_kA-P_kBR^{-1}B^TP_k+Q)$
    \IF {$P_{k+1/2}>0$ and $| P_{k+1/2}-P_{k}|/h_k<\bar\varepsilon$}
    \RETURN $P_{k}$ as an approximation to $P^*$
    \ELSIF {$| P_{k+1/2}|>B_q$ or $ P_{k+1/2}\not>0$}    
     \STATE $P_{k+1}\gets P_0$. $q\gets q+1$.
     \ELSE
     \STATE $ P_{k+1}\gets  P_{k+1/2}$
    \ENDIF
    \STATE $k\gets k+1$
    \ENDLOOP
 \end{algorithmic}
\end{algorithm}


%


\section{Robust DP and VI for continuous-time systems}\label{RVI}
The purpose of this section is to extend \cref{alg0} in different directions by providing a concrete stability and robustness analysis for the DMRE and VI.
\subsection{Robust DP and DMRE}\label{RVI_DMRE}
As it has been shown in \cite{Willems1971,Kucera1973}, for any $P(0)=P(0)^T\geq0$, the solution to the following DMRE converges to $P^*$ asymptotically as $t$ goes to infinity:
 \begin{align}
 \dot P=A^TP+PA-PBR^{-1}B^TP+Q.\label{dmre_equ1}
 \end{align}
Denoting $K=R^{-1}B^TP$,  we have from \cref{dmre_equ1} that 
 \begin{align*}
 \dot P=&A^TP+PA-PBR^{-1}B^TP+Q\\
 =&(A-BK)^TP+P(A-BK)+K^TRK+Q\\
  =&(A-BK^*)^TP+P(A-BK^*)+(K^*)^TRK^*+Q-(K-K^*)^TR(K-K^*).
   \end{align*}
   Subtracting \cref{are_equ1} from the above equation, and letting $\tilde{P} = P - P^*$, we have
    \begin{align}
 \dot {\tilde P}=(A-BK^*)^T\tilde P+\tilde P(A-BK^*)-\tilde PBR^{-1}B^T\tilde P.\label{DP_equ1}
 \end{align}
 
 The following  two lemmas play an important role in developing our robust VI:
  \begin{lemma}\label{lem1}
$\hat P$ is globally\footnote{Global in the sense that the region of attraction is the entire normed space of all  $n$-by-$n$ real matrices equipped with the induced matrix norm.} exponentially stable at $P^*$, where $\hat P$ is the solution of the following system:
  \begin{align}
 \dot {\hat P}&=(A-BK^*)^T\hat P+\hat P(A-BK^*)+(K^*)^TRK^*+Q,\quad \hat P(0)\in\mathbb{R}^{n\times n}.\label{PE_equ1}
 \end{align}
 \end{lemma}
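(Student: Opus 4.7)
The plan is to reduce the matrix ODE \cref{PE_equ1} to a homogeneous linear Lyapunov-type equation around $P^*$ and then invoke the fact that $A - BK^*$ is Hurwitz (as stated just after \cref{are_equ1}).

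First, I would verify that $P^*$ is an equilibrium of \cref{PE_equ1}. Rewriting the ARE \cref{are_equ1} in terms of $K^* = R^{-1}B^T P^*$ gives
\begin{equation*}
0 = (A-BK^*)^T P^* + P^*(A-BK^*) + (K^*)^T R K^* + Q,
\end{equation*}
exactly the right-hand side of \cref{PE_equ1} evaluated at $\hat P = P^*$. Hence $P^*$ is an equilibrium.

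Next, I would introduce the error $\tilde P = \hat P - P^*$ and subtract. All the constant terms $(K^*)^T R K^* + Q$ cancel, leaving the purely linear dynamics
\begin{equation*}
\dot{\tilde P} = (A-BK^*)^T \tilde P + \tilde P(A-BK^*), \qquad \tilde P(0) \in \mathbb{R}^{n\times n}.
\end{equation*}
This is a standard Sylvester-type matrix ODE whose closed-form solution is $\tilde P(t) = e^{(A-BK^*)^T t}\,\tilde P(0)\,e^{(A-BK^*) t}$.

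Since $A-BK^*$ is Hurwitz, there exist constants $c\geq 1$ and $\lambda>0$ such that $|e^{(A-BK^*) t}| \leq c\, e^{-\lambda t}$ for all $t\geq 0$. Applying the submultiplicativity of the induced matrix norm to the closed-form solution yields
\begin{equation*}
|\tilde P(t)| \leq c^2\, e^{-2\lambda t} |\tilde P(0)|,
\end{equation*}
which is the definition of global exponential stability of $\hat P$ at $P^*$ on the entire normed space of $n\times n$ real matrices. There is no real obstacle here: the construction is routine once one recognizes that subtracting the ARE kills the inhomogeneous terms and linearizes the flow exactly; the only thing to be careful about is to note that \cref{PE_equ1} is posed on general (not necessarily symmetric) matrices, so the norm estimate should be written for the induced matrix norm rather than invoking a symmetric Lyapunov function, though a symmetric $P(0)$ stays symmetric under the flow and the same bound applies.
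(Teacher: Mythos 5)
Your proposal is correct and follows essentially the same route as the paper: both subtract the ARE from \cref{PE_equ1} to obtain the homogeneous linear error dynamics $\dot{\tilde P}=(A-BK^*)^T\tilde P+\tilde P(A-BK^*)$ and then conclude from the Hurwitz property of $A-BK^*$. The only difference is in the last step, where you write the explicit solution $e^{(A-BK^*)^Tt}\tilde P(0)e^{(A-BK^*)t}$ and bound it by submultiplicativity, while the paper vectorizes and cites that the Kronecker sum $(A-BK^*)\oplus(A-BK^*)$ is Hurwitz; these are two equivalent ways of certifying the same fact, yours being marginally more explicit about the decay rate.
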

 \begin{proof}
Denote $\xi=\ves(\hat P-P^*)\in\mathbb{R}^{n^2}$.
Then, by subtracting \cref{are_equ1} from  \cref{PE_equ1}, one has 
  \begin{align}
  \dot {\xi}&=((A-BK^*)\oplus (A-BK^*))^T \xi.\label{lem1_pf_equ1}
 \end{align}
 Since $A-BK^*$ is Hurwitz, $((A-BK^*)\oplus (A-BK^*))^T$ is also Hurwitz \cite{Brewer1978}.
 This completes the proof.
 \end{proof}
 
 \begin{remark}\label{rem0}
 Note from \cref{PE_equ1} that when $\hat P(0)\in\mathcal{S}^n$, we have $\hat P(t)\in\mathcal{S}^n$ for all $t>0$.
 Since $\mathcal{S}^n\subset \mathbb{R}^{n\times n}$, we know $\hat P$ is also exponentially stable at $P^*$ in  $\mathcal{S}^n$.
 \end{remark}

 \begin{lemma}\label{lem0}
 Consider a dynamical system defined on the {\highlight inner product space $(\mathcal{S}^n,\langle\cdot,\cdot\rangle_F)$}:
 \begin{align}
 \dot P=G(P),\label{lem0_equ1}
 \end{align}
 where $G:\mathcal{S}^n\rightarrow\mathcal{S}^n$ is locally Lipschitz, and satisfies $G(0)=0$.
 If $R_A$ is the region of attraction of the origin for system \cref{lem0_equ1}, 
 then there exists a smooth Lyapunov function $V: R_A\rightarrow \mathbb{R}_+$, such that 
\begin{align*}
 \langle\partial_xV(P),G(P)\rangle_F&<0, \quad V(P)>0\quad \forall P \in R_A \setminus \{0\},\\
\lim_{P\rightarrow \partial R_A}V(P)&= \infty,\quad \langle V(0), G(0)\rangle_F=0,\quad V(0)=0.
\end{align*}
 \end{lemma}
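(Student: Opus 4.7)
The plan is to reduce the statement to the classical converse Lyapunov theorem of Kurzweil/Massera for asymptotically stable equilibria of locally Lipschitz ODEs on Euclidean space. Since $\mathcal{S}^n$ is a finite-dimensional real inner product space of dimension $N:=n(n+1)/2$ with respect to $\langle\cdot,\cdot\rangle_F$, it is isometrically isomorphic to $(\mathbb{R}^N,\langle\cdot,\cdot\rangle)$. Concretely, one may take $\Phi:\mathcal{S}^n\rightarrow \mathbb{R}^N$ defined by $\Phi(P)_{ii}=P_{ii}$ on the diagonal entries and $\Phi(P)_{ij}=\sqrt{2}\,P_{ij}$ for $i<j$; this is a linear isometry because $\langle P,Q\rangle_F=\sum_i P_{ii}Q_{ii}+2\sum_{i<j}P_{ij}Q_{ij}=\langle\Phi(P),\Phi(Q)\rangle$.

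First, I would push the dynamics forward under $\Phi$ to obtain $\dot y=\tilde G(y)$, where $\tilde G(y):=\Phi(G(\Phi^{-1}(y)))$ is locally Lipschitz on the open set $\tilde R_A:=\Phi(R_A)$ and satisfies $\tilde G(0)=0$. The origin is an asymptotically stable equilibrium of the transformed system whose region of attraction is precisely $\tilde R_A$, since $\Phi$ conjugates the two flows. Second, I would invoke the strong converse Lyapunov theorem due to Kurzweil (1956) on $\tilde R_A$, which guarantees a $C^\infty$ function $\tilde V:\tilde R_A\rightarrow \mathbb{R}_+$ with $\tilde V(0)=0$, $\tilde V(y)>0$ for $y\in \tilde R_A\setminus\{0\}$, $\nabla\tilde V(y)\cdot \tilde G(y)<0$ for $y\in \tilde R_A\setminus\{0\}$, and $\tilde V(y)\rightarrow\infty$ as $y\rightarrow\partial\tilde R_A$.

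Setting $V:=\tilde V\circ \Phi$ on $R_A$, the Fr\'echet derivative satisfies $\langle \partial_PV(P),H\rangle_F=\nabla\tilde V(\Phi(P))\cdot \Phi(H)$ for every $H\in\mathcal{S}^n$ because $\Phi$ is a linear isometry; plugging in $H=G(P)$ and using $\Phi(G(P))=\tilde G(\Phi(P))$ gives $\langle \partial_PV(P),G(P)\rangle_F=\nabla\tilde V(\Phi(P))\cdot \tilde G(\Phi(P))<0$ on $R_A\setminus\{0\}$. The positivity and the blow-up at the boundary transfer directly since $\Phi$ is a homeomorphism sending $\partial R_A$ to $\partial\tilde R_A$, and the last two conditions at the origin are immediate from $G(0)=0$ and $V(0)=0$.

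The main obstacle I anticipate is pinpointing a reference that delivers the boundary blow-up property $\lim_{P\rightarrow \partial R_A}V(P)=\infty$ in addition to smoothness on the entire region of attraction; most introductory expositions of the converse Lyapunov theorem state only a local version on a neighborhood of the equilibrium, whereas here one needs the global-on-$R_A$ version. Kurzweil's 1956 construction (via integration of a suitable monotone functional of the flow, composed with a smoothing mollifier) supplies exactly this, and the isometric transfer above then yields the claim on $\mathcal{S}^n$ with no further work.
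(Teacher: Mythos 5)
Your proposal is correct and follows essentially the same route as the paper's proof: both map $\mathcal{S}^n$ to $\mathbb{R}^{n(n+1)/2}$ via the same $\sqrt{2}$-weighted isometric isomorphism, invoke the converse Lyapunov theorem on the region of attraction of the transformed ODE (the paper cites Khalil, Theorem 4.17, rather than Kurzweil directly), and pull the resulting smooth Lyapunov function back through the isometry.
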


\begin{proof}
Denote a mapping $\mathcal{M}(\cdot):\mathcal{S}^n\rightarrow\mathbb{R}^{n(n+1)/2}$, such that 
\begin{align*}
\mathcal{M}(M)=[M_{11},\sqrt{2}M_{12},\cdots,\sqrt{2}M_{1n},M_{22},\sqrt{2}M_{23},\cdots, \sqrt{2}M_{(n-1)n},M_{nn}]^T.
\end{align*}
Then, for any $M_1,M_2\in\mathcal{S}^n$, $\mathcal{M}^T(M_1)\mathcal{M}(M_2) = \langle M_1,M_2\rangle_F$.
Hence, $\mathcal{M}(\cdot)$ is a smooth isometric isomorphism\footnote{
A bounded linear operator is called an isometric isomorphism if it is a norm preserving bijection which is continuous and has a continuous inverse \cite[pp. 71]{Reed1980}.}.
Then, one can rewrite \cref{lem0_equ1} as the following ODE:
\begin{align}
\dot p=g(p),\label{lem0_equ2}
\end{align}
where $p=\mathcal{M}(P)$, and $g=\mathcal{M} \circ G\circ\mathcal{M}^{-1}$.
Denote the region of attraction of $0\in \mathbb{R}^{n(n+1)/2}$ by $R_A'\subseteq \mathbb{R}^{n(n+1)/2}$.
Since $R_A$ is not empty, $R_A'$ is also not empty.
By converse Lyapunov theorem \cite[Theorem 4.17]{Khalil2002}, we know there exists a smooth function $W(\cdot): R_A'\rightarrow \mathbb{R}_+$, such that 
\begin{align*}
\partial_x W(p)g(p)&<0, \quad W(p)>0\quad \forall p \in R_A' \setminus \{0\},\\
\lim_{p\rightarrow \partial R_A'}W(p)&= \infty,\quad \partial_x W(0)g(0)=0,\quad W(0)=0.
\end{align*}

We claim $R_A' = \mathcal{M}(R_A)$.
Otherwise, if there exist $P_0\in R_A$ and $\mathcal{M}(P_0)\not\in R_A'$, then $R_A'$ is no longer the region of attraction for \cref{lem0_equ2} since the solution to \cref{lem0_equ2}  starting from $\mathcal{M}(P_0)$ also converges to the origin, by the norm preserving property of $\mathcal{M}$.
Similarly, if there exists $p_0\in R_A'$ such that $\mathcal{M}^{-1}(p_0)\not\in R_A$,  then $R_A$ is no longer the region of attraction for \cref{lem0_equ1}.

Now, we  define a function $V(\cdot): R_A\rightarrow \mathbb{R}_+$, such that $V = W\circ\mathcal{M}$.
By the definition of matrix calculus, $\partial_xV = \mathcal{M}^{-1}\circ\partial_xW \circ\mathcal{M}$.
It is easy to see that  all the higher-order derivatives of $V$ can be defined in a  similar manner.
Hence,  $V$ is also smooth.
By the definition of Frobenius inner product, 
\begin{align*}
\partial_x W(p)g(p) = \langle\partial_xV(P),G(P)\rangle_F, \quad \forall P\in \mathcal{S}^n.
\end{align*}
This concludes the proof.
\end{proof}

\begin{remark}\label{rem_lem0}
\cref{lem0} extends the converse Lyapunov theorem for general nonlinear systems \cite[Theorem 4.17]{Khalil2002} to the space of real symmetric matrices. 
{\highlight The converse statement of \cref{lem0}, i.e., the Lyapunov theorem for the stability of general nonlinear systems over  $(\mathcal{S}^n,\langle\cdot,\cdot\rangle_F)$, can also be derived in a similar way.}
Moreover, one can also generalize the converse Lyapunov theorem for exponentially stable systems \cite[Theorem 4.14]{Khalil2002}.
We omit this direct extension to avoid duplication.
\end{remark}

 \begin{proposition}\label{thm1}
 $P$ is exponentially stable at $P^*$ over $\mathcal{S}^n$.
 \end{proposition}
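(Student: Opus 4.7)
The plan is to reduce the DMRE dynamics on $\mathcal{S}^n$ to a standard ODE on Euclidean space via the isometric isomorphism $\mathcal{M}$ introduced in \cref{lem0}, and then invoke Lyapunov's indirect method. Concretely, letting $\tilde P = P - P^*$, the derivation preceding \cref{DP_equ1} gives
\begin{align*}
\dot{\tilde P} = (A-BK^*)^T \tilde P + \tilde P (A-BK^*) - \tilde P B R^{-1} B^T \tilde P,
\end{align*}
which defines a smooth (polynomial) vector field $F:\mathcal{S}^n\to\mathcal{S}^n$ with $F(0)=0$.

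First, I would set $\tilde p = \mathcal{M}(\tilde P)$ and rewrite the above equation in the form $\dot{\tilde p} = f(\tilde p)$, where $f = \mathcal{M}\circ F\circ \mathcal{M}^{-1}$ is smooth on $\mathbb{R}^{n(n+1)/2}$. Next, I would identify the Jacobian of $f$ at $0$: by chain rule and the linearity of $\mathcal{M}$, it equals $\mathcal{M}\circ L\circ \mathcal{M}^{-1}$, where $L:\mathcal{S}^n\to\mathcal{S}^n$ is the linear operator $L(\tilde P)=(A-BK^*)^T\tilde P+\tilde P(A-BK^*)$. By \cref{lem1} together with \cref{rem0}, the flow generated by $L$ is (globally) exponentially stable on $\mathcal{S}^n$; since $\mathcal{M}$ is an isometric isomorphism, this implies that the Jacobian matrix of $f$ at the origin is Hurwitz.

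Having a Hurwitz linearization, I would apply the classical Lyapunov indirect method (e.g.\ \cite[Theorem 4.15]{Khalil2002}) to the ODE $\dot{\tilde p}=f(\tilde p)$ to conclude that $\tilde p = 0$ is exponentially stable. Finally, I would transfer this conclusion back to $\mathcal{S}^n$: because $|\mathcal{M}(\tilde P)|=|\tilde P|_F$, the exponential decay estimate $|\tilde p(t)|\le c\,|\tilde p(0)|\,e^{-\lambda t}$ is equivalent to the same estimate on $|\tilde P(t)|_F$, proving that $P^*$ is an exponentially stable equilibrium of \cref{dmre_equ1} on $\mathcal{S}^n$.

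I do not expect any single step to be a genuine obstacle, since each ingredient is already in place: the nonlinear correction $-\tilde P B R^{-1} B^T\tilde P$ is $o(|\tilde P|_F)$ in a neighborhood of $0$, the linearization is Hurwitz by \cref{lem1}, and \cref{lem0}'s isomorphism cleanly converts matrix-valued results to the vector form required by standard Lyapunov theorems. The only subtle point is book-keeping: one must verify that $F$ genuinely maps $\mathcal{S}^n$ to $\mathcal{S}^n$ (which follows since $R$ and hence $R^{-1}$ is symmetric), so that the isomorphism $\mathcal{M}$ is applicable and one really obtains a self-contained ODE on $\mathbb{R}^{n(n+1)/2}$ rather than being forced to work in the larger space $\mathbb{R}^{n^2}$.
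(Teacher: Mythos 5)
Your proposal is correct and follows essentially the same route as the paper: both arguments amount to Lyapunov's indirect method, using the isomorphism of \cref{lem0} to pass to an ODE on $\mathbb{R}^{n(n+1)/2}$ and the Hurwitz linearization supplied by \cref{lem1} and \cref{rem0}, with the quadratic term $-\tilde PBR^{-1}B^T\tilde P$ treated as a higher-order perturbation. The only difference is presentational: the paper unpacks the indirect method by explicitly constructing a converse Lyapunov function $V$ with the gradient bound $|\partial_x V(\tilde P)|<C_4|\tilde P|$ (which it then reuses in the proof of \cref{thm2}), whereas you invoke the linearization theorem as a black box --- equally valid for the proposition itself.
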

 \begin{proof}
 Note from \cref{lem1} and \cref{rem0} that for any $\hat P(0)\in\mathcal{S}^n$, ${\hat P}$ converges exponentially to $P^*$.
 Then, following \cref{lem0}, \cref{rem_lem0}, and \cref{lem1_pf_equ1}, there exists a smooth Lyapunov function $V: \mathcal{S}^n\rightarrow \mathbb{R}_+$ satisfying
 \begin{align*}
 C_1|\hat P-P^*|^2\leq V(\hat P-P^*)\leq  C_2|\hat P-P^*|^2,\\
\dot V(\hat P-P^*)\leq-C_3 |\hat P-P^*|^2,\quad  |\partial_x V(\hat P-P^*)|<C_4|\hat P-P^*|,
 \end{align*}
{\highlight for any $\hat P$ on $\mathcal{S}^n$}, where $C_i>0$, $i=1,2,3,4$.
 Note that we use the induced norm here instead of the Frobenius norm, due to the equivalence of matrix norms.
 
Comparing the dynamics of $ \hat P$ and $ P$, we see the only difference between these two systems is the quadratic term $\tilde PBR^{-1}B^T\tilde P$.
Now, by taking the derivative of $V$ along the solutions of system \cref{DP_equ1}, we have
  \begin{align*}
\dot V(\tilde P)\leq-C_3 |\tilde P|^2+C_5 |\partial_x V(\tilde P)||\tilde P|^2\leq-C_3 |\tilde P|^2+C_4C_5|\tilde P|^3,
 \end{align*}
 {\highlight for any $\tilde P\in\mathcal{S}^n$ and constant $C_5>0$.}
 From the above inequality, we know there exist $\varepsilon>0$ and $C_6>0$, such that, 
   \begin{align}
\dot V(\tilde P)\leq-C_6 |\tilde P|^2\leq -\frac{C_6}{C_2}V(\tilde P),\quad \forall |\tilde P|<\varepsilon.\label{thm1_pf2}
 \end{align}
The proof is then completed using the Lyapunov theorem \cite[Theorem 4.10]{Khalil2002}.
 \end{proof}
 
  \begin{remark}\label{rem1}
Compared with \cite[Remark 21]{Willems1971} and \cite[Theorem 17]{Kucera1973}, \cref{thm1} provides a stronger result, in the sense that it characterizes the convergence speed of $\tilde P(t)$ in a neighborhood of the origin. 
This is the foundation of our robustness analysis on DMRE and VI.
 \end{remark}
 

We will exploit the important feature of exponential stability further in the rest  of this paper.
First,  let us consider the following variant of \cref{dmre_equ1} subject to a disturbance input $\Delta(t)=\Delta^T(t)$:
  \begin{align}
 \dot P_\Delta&=A^TP_\Delta+P_\Delta A-P_\Delta BR^{-1}B^TP_\Delta+Q+\Delta,\quad P_\Delta(0)=P_\Delta^T(0)\geq0.\label{dmre_equ2}
 \end{align}
 
 \begin{remark}
$\Delta$ can represent a large class of disturbances. 
 In particular, we conduct robustness analysis on \cref{dmre_equ2} in \cref{thm2} below by considering three different forms of $\Delta$, including 
\begin{enumerate*}[label=\textit{\alph*)}]
\item a state-independent external signal (\cref{thm2}, parts (i) and (ii)); 
\item the output of a nonlinear dynamical system  (\cref{thm2}, part (iii)); and 
\item a stochastic disturbance (\cref{thm2}, part (iv)).
\end{enumerate*}
The assumption $\Delta(t)=\Delta^T(t)$ is to guarantee that $P_\Delta$ is always symmetric.
This condition can be easily satisfied in practice, since for any $M\in\mathbb{R}^{n\times n}$, $x^TMx=\frac{1}{2}x^T(M+M^T)x$, and $\frac{1}{2}(M+M^T)$ is real symmetric.
 \end{remark}
 
 \begin{theorem}\label{thm2}
 Consider system \cref{dmre_equ2} with $Q>0$.
{\highlight  Denoting $\tilde P_\Delta = P_\Delta-P^*$,} we have
\begin{tasks}[counter-format=(tsk[r]),label-width=4.2ex](1)
 \task  If $\inf_t\lambda_m(Q+\Delta(t))\geq0$ and $\sup_t\lambda_M(Q+\Delta(t))<\infty$, then $P_\Delta$ is well defined on $\mathbb{R}_+$, and there exists  $M\in\mathcal{S}^n$ that is dependent on $P_\Delta(0)$, such that $0\leq P_\Delta(t)<M$ for all $t>0$.
 \task  If $\Delta$ satisfies the conditions in (i), and $\lim_{t\rightarrow \infty}\Delta(t)=0$, then $\lim_{t\rightarrow\infty}P_\Delta(t)=P^*$.
If in addition $\Delta\in L^2$, then {\highlight $\tilde P_\Delta\in L^2$.}
  \task  There exists $\gamma>0$, such that if the following system\footnotemark[3]
  \begin{align}
  \dot M=f(M,P_\Delta),\quad \Delta(t)=\Delta(P_\Delta,M),\label{thm2_equ1}
  \end{align}
  where 
$f$ {\highlight and $\Delta$ are} locally Lipschitz, $f(M^*,P^*)=0$, and $\Delta(P^*,M^*)=0$,
is zero-state detectable\footnotemark[4] and admits an IOS Lyapunov function  {\highlight $V_f$} satisfying
   \begin{align}
{\highlight \dot V_f(\tilde M)\leq- |\Delta|^2+\gamma^2| \tilde P_\Delta|^2}, \quad \forall M\in B_{\varepsilon_0}(M^*),\ \varepsilon_0>0,\label{thm2_equ2}
 \end{align}
{\highlight where $\tilde M = M-M^*$,}
then $(P_\Delta,M)$ is asymptotically stable at $(P^*,M^*)$.
\task  Suppose $\Delta(t)=\sum_{i=1}^N\Delta_i (P_\Delta)v_i(t)$, where $N>0$, $\Delta_i:\mathcal{S}^n\rightarrow\mathcal{S}^n$, and the $v_i$ are one-dimensional i.i.d. Gaussian white noises.
Then, there exists $\gamma>0$, such that if  {\highlight  $\sum_i|\Delta_i|^2<\gamma|\tilde P_\Delta|$} in a neighborhood of $P^*$,  $P_\Delta$ is asymptotically stable at $P^*$ in the mean square sense over $\mathcal{S}^n$.
\end{tasks}
 \end{theorem}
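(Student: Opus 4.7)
My plan is to treat all four parts as robustness analyses built upon the local exponential stability of the unperturbed DMRE established in \cref{thm1}. Let $V:\mathcal{S}^n\to\mathbb{R}_+$ denote the smooth Lyapunov function from the proof of \cref{thm1}, which satisfies the quadratic sandwich and decay estimates
\begin{align*}
C_1|\tilde P|^2\leq V(\tilde P)\leq C_2|\tilde P|^2,\quad \langle\partial_P V(\tilde P),G(\tilde P)\rangle_F\leq -C_3|\tilde P|^2,\quad |\partial_P V(\tilde P)|\leq C_4|\tilde P|,
\end{align*}
where $G(\tilde P)=(A-BK^*)^T\tilde P+\tilde P(A-BK^*)-\tilde P BR^{-1}B^T\tilde P$ is the right-hand side of \cref{DP_equ1}. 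Each part then reduces to controlling the additional term that $\Delta$ contributes when $V$ is differentiated along trajectories of the perturbed DMRE \cref{dmre_equ2}.

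For \emph{part (i)}, I would complete the square exactly as in the derivation of \cref{DP_equ1}, but with $K=R^{-1}B^TP_\Delta$, to rewrite \cref{dmre_equ2} as a Lyapunov equation driven by $Q+\Delta+K^{*T}RK^*-(K-K^*)^TR(K-K^*)$ with the Hurwitz closed-loop matrix $A-BK^*$. The cross term is non-positive, so the comparison principle bounds $P_\Delta(t)$ above by the solution of the corresponding linear Lyapunov equation with $\sup_t\lambda_M(Q+\Delta)$ as forcing, which exists on $\mathbb{R}_+$. Non-negativity $P_\Delta(t)\geq 0$ follows from the variation-of-constants formula along the time-varying closed-loop matrix $A-BK(t)$ together with $Q+\Delta\geq 0$ and $P_\Delta(0)\geq 0$. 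For \emph{part (ii)}, boundedness from (i) combined with the classical global asymptotic convergence of \cite{Willems1971,Kucera1973} and a LaSalle-type argument using $\Delta(t)\to 0$ drives $P_\Delta$ into the local exponential region; once there, differentiating $V$ along \cref{dmre_equ2} yields
\begin{align*}
\dot V(\tilde P_\Delta)\leq -C_3|\tilde P_\Delta|^2+C_4|\tilde P_\Delta||\Delta|\leq -\tfrac{C_3}{2}|\tilde P_\Delta|^2+\tfrac{C_4^2}{2C_3}|\Delta|^2
\end{align*}
by Young's inequality, from which $\tilde P_\Delta\in L^2$ follows by integration whenever $\Delta\in L^2$.

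\emph{Part (iii)} is the genuine small-gain step and, I expect, the main technical obstacle. The local $V$-estimate above reads as an ISS-Lyapunov characterization of the $P_\Delta$-subsystem with $L^2$ gain of order $C_4/C_3$ from $\Delta$ to $\tilde P_\Delta$, while \cref{thm2_equ2} is precisely an IOS Lyapunov function for the $M$-subsystem with $L^2$ gain $\gamma$ from $\tilde P_\Delta$ to $\Delta$. The nonlinear small-gain theorem of \cite{Jiang1994}, applied to this feedback interconnection, yields asymptotic stability of the output pair provided the loop gain $\gamma\cdot(C_4/C_3)$ is strictly less than one, which determines the admissible range of $\gamma$. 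Zero-state detectability of \cref{thm2_equ1} then upgrades output stability to full-state asymptotic stability of $(P_\Delta,M)$ at $(P^*,M^*)$. The delicate point is keeping the interconnected trajectory inside the neighborhood on which the local $V$-estimates hold; I would handle this by a standard invariance argument, tightening $\varepsilon_0$ and $\gamma$ together so that the small-gain condition forces a forward-invariant sublevel set of $V+V_f$.

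For \emph{part (iv)}, I would apply It\^o's formula to $V(\tilde P_\Delta)$, lifted to $\mathbb{R}^{n(n+1)/2}$ through the isometric isomorphism $\mathcal{M}$ from \cref{lem0} so that the matrix-valued SDE is a well-defined Euclidean SDE. The drift contributes the deterministic bound $-C_3|\tilde P_\Delta|^2$, the martingale part has zero expectation, and the It\^o correction contributes a term of order $\sum_i|\Delta_i|^2$ using the second-order smoothness of $V$ and boundedness of $\partial_P^2 V$ on a neighborhood of $P^*$. Reading the stated hypothesis in its dimensionally consistent form $\sum_i|\Delta_i|^2<\gamma|\tilde P_\Delta|^2$, choosing $\gamma$ small enough yields $\frac{d}{dt}\mathbb{E}[V(\tilde P_\Delta)]\leq -c\,\mathbb{E}[V(\tilde P_\Delta)]$ for some $c>0$, and mean-square asymptotic stability then follows by Gr\"onwall's inequality together with the sandwich bound on $V$.
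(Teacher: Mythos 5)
Your proposal is correct in substance and, for parts (ii)--(iv), follows essentially the same route as the paper: the local Lyapunov function inherited from \cref{thm1} gives $\dot V(\tilde P_\Delta)\leq -C_1|\tilde P_\Delta|^2+C_2|\tilde P_\Delta||\Delta|$, Young's inequality yields the $L^2$ bound in (ii), the interconnection in (iii) is closed by exactly the composite function $V+\tfrac{C_2^2}{C_1}V_f$ you describe as a forward-invariant sublevel set (the paper writes out the Lyapunov-based small-gain computation directly rather than citing the trajectory-based theorem, and then invokes zero-state detectability with LaSalle), and (iv) is the same It\^o computation; you also correctly read the hypothesis of (iv) as $\sum_i|\Delta_i|^2<\gamma|\tilde P_\Delta|^2$, which is what the paper's proof actually uses. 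The genuine divergence is in part (i): the paper bounds $P_\Delta$ via the LQR variational interpretation, identifying $P_\Delta(-t)$ with the value matrix of a finite-horizon cost with running weight $Q+\Delta(s)\geq 0$ and dominating it by the value with a constant weight $\overline Q>Q+\Delta$, whereas you complete the square around $K^*$ and invoke the comparison principle for matrix Lyapunov differential equations, discarding the nonpositive term $-(K-K^*)^TR(K-K^*)$ and bounding $P_\Delta$ by the solution of a linear Lyapunov ODE with Hurwitz $A-BK^*$; nonnegativity follows from variation of constants along $A-BK(t)$. Both arguments are valid and of comparable length; yours stays entirely at the level of the matrix ODE and makes the mechanism (monotonicity of the Riccati flow) explicit, while the paper's optimal-control phrasing is reused later in \cref{thm3}. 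Two minor points you should make explicit: in (i), the a priori bounds must be stated on the maximal interval of existence to rule out finite escape before concluding global existence; and in (ii), the ``LaSalle-type argument'' for the non-autonomous convergence step needs the asymptotically-autonomous machinery spelled out --- the paper does this by combining uniform attraction of the nominal flow over a fixed horizon with continuity of trajectories under vanishing perturbations (Sontag), which is the rigorous version of what you sketch.
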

 
 \footnotetext[3]{$M$ can be either a real vector or a real matrix, depending on the specific problem formulation.
 For  consistency, here we consider $M$ as a real matrix of an appropriate dimension.}
 \footnotetext[4]{Here, with slight abuse of notation, we say \cref{thm2_equ1} is zero-state detectable if $\Delta\equiv0$ and $P_\Delta\equiv P^*$ imply $M\equiv M^*$.}
 
 \begin{proof}
To prove part (i), we first introduce the following finite-horizon cost: 
 \begin{align*}
\mathcal{J}_{t}(x(t);u,Q)=x^T(0)P_\Delta(0)x(0)+\int_{t}^{0}(x^T(s)Q(s)x(s)+u^T(s)Ru(s))ds,
\end{align*}
where $t<0$ is an arbitrary time instant, and $Q(s)=Q+\Delta(s)$. 
Since $Q(s)\geq0$ on $[t,0]$, it is well known from the LQR theory \cite[Chapter 6.1]{Liberzon2012} that $\inf_u\mathcal{J}_{t}(x(t);u,Q)=x^T(t)M(t)x(t)$, where $M(s)=M^T(s)>0$, $s\in[t,0]$, satisfies
  \begin{align*}
 -\dot M&=A^TM+M A-M BR^{-1}B^TM+Q+\Delta,\quad M(0)=P_\Delta(0).
 \end{align*}
Moreover, the optimal controller for $\mathcal{J}_{t}$ is $u^o(s):=-R^{-1}B^TM(s)$.

 On the other hand, we have from the conditions on $Q+\Delta$ that there exists a constant matrix $\overline Q\in\mathcal{S}$, such that $0\leq Q(s)<\overline Q$ for all $s$.
  Thus,
 \begin{align}
0\leq x^T(t)M(t)x(t)
\leq x^T(0)P_\Delta(0)x(0)+\int_{t}^{0}(x^T Q(s)x+(\bar u^o)^TR\bar u^o)ds\notag\\
\leq x^T(0)P_\Delta(0)x(0)+\int_{t}^{0}(x^T\overline Qx+(\bar u^o)^TR\bar u^o)ds,\label{thm1_pf_equ1}
\end{align}
where $\bar u^o:=\arg\inf_u\mathcal{J}_{t}(x(t);u,\overline Q)$.
Since  $\overline Q$ is positive definite, we know there exists a real symmetric matrix $\overline M>0$, such that 
\begin{align*}
\inf_u\mathcal{J}_{t}(x(t);u,\overline Q)< x^T(t)\overline Mx(t).
\end{align*}
Then, we have from \cref{thm1_pf_equ1} that $0\leq M(t)<\overline M$ for all $t<0$.
Comparing the definitions of $M$ and $P_\Delta$, we know $M(t)=P_\Delta(-t)$.
Thus, $0\leq P_\Delta(t)<\overline M$ for all $t>0$.

To prove part (ii), note from part (i) that $P_\Delta$ is bounded on $\mathbb{R}_+$.
Then, since $P(t)$ converges to $P^*$,  for any $\varepsilon>0$, there exists $T_0>0$, such that $\sup_{T>T_0}|P(t+T)-P^*|<\varepsilon$, given $P(t)=P_\Delta(t)$  for any $t>0$.
On the other hand, by \cite[Theorem 55]{Sontag1998}, for any $T_1>0$ and $\varepsilon>0$, we can find $t_0>0$ under which $\sup_{t\geq t_0}|\Delta(t)|$ is sufficiently small,  so that  $\sup_{T\in[0,T_1]}|P(t+T)-P_\Delta(t+T)|<\varepsilon$, given $P(t)=P_\Delta(t)$  for all $t>t_0$.

Now, by picking $T_1=2T_0$, one can guarantee from the above analysis that $|P^*-P_\Delta(t+T)|<2\varepsilon$ for all $t>t_0$ and $T\in[T_0,2T_0]$.
Thus, we know $\sup_{t>t_0+T_0}|P_\Delta(t)-P^*|\leq2\varepsilon$.
Since $t_0$ exists for any $\varepsilon$, which can be made arbitrarily small, we have  $\lim_{t\rightarrow\infty}P_\Delta(t)=P^*$.

Moreover, choosing the same Lyapunov function in the proof of \cref{thm1}, we know there exist positive constants $C_1$, $C_2$, and $\varepsilon_1$, such that 
   \begin{align}
\dot V(\tilde P_\Delta)\leq-C_1 |\tilde P_\Delta|^2 + C_2|\tilde P_\Delta||\Delta|,\quad \forall |\tilde P_\Delta|<\varepsilon_1,\label{thm2_pf2}
 \end{align}
 where $\tilde P_\Delta = P_\Delta-P^*$.
  By completing the squares, we have from \cref{thm2_pf2} that \cref{dmre_equ2} admits a finite linear $L^2$ gain in a neighborhood of $P^*$.
 Thus, by $H^\infty$ control theory, $\tilde P_\Delta\in L^2$ if $\Delta\in L^2$.

Now, we prove part (iii).
Note from \cref{thm2_equ2} and  \cref{thm2_pf2} that 
 if  
\begin{align*}
\gamma<\frac{C_1}{\sqrt{2}C_2},
 \end{align*}
 then {\highlight by defining $\bar V(P,M) = V(P)+\frac{C_2^2}{C_1} V_f(M)$,}
    \begin{align*}
\frac{d}{dt}{\highlight\bar V(\tilde P_\Delta,\tilde M)} &\leq-C_1 |\tilde P_\Delta|^2 + C_2|\tilde P_\Delta||\Delta|- \frac{C_2^2}{C_1}|\Delta|^2+\frac{C_2^2}{C_1}\gamma^2|\tilde P_\Delta|^2\\
&=-\left(\frac{C_1}{2}-\frac{C_2^2}{C_1}\gamma^2\right)|\tilde P_\Delta|^2-\frac{C_2^2}{2C_1}|\Delta|^2\\
&\leq-C_3 |\tilde P_\Delta|^2-\frac{C_2^2}{2C_1}|\Delta|^2,\quad \forall |\tilde P_\Delta|<\varepsilon_1,\ {\highlight |\tilde M|< \varepsilon_0,}
 \end{align*}
for some $C_3>0$.
Since \cref{thm2_equ1} is zero-state detectable, we have from LaSalle's invariance principle \cite[Corollary 4.1]{Khalil2002} that $(P_\Delta,M)$ is asymptotically stable at $(P^*,M^*)$.

Finally, to prove part (iv) involving stochastic disturbance,  from It{\^o}'s lemma \cite{Steele2001} and \cref{thm1_pf2}, it follows that
   \begin{align*}
\mathcal{L} V(\tilde P_\Delta)\leq-C_1 |\tilde P_\Delta|^2 + C_4\sum_{i=1}^N|\Delta_i|^2,\quad \forall |\tilde P_\Delta|<\varepsilon,
 \end{align*}
 for some positive constants $\varepsilon$ and $C_4$, where $\mathcal{L}$ denotes the differential generator.
 Note that $C_4$ is bounded since $\partial_x^2V$ is  bounded on any compact sets, as $V$ is smooth.
 Obviously, if
\begin{align*}
 \sum_{i=1}^N|\Delta_i|^2<\frac{C_1}{C_4}|\tilde P_\Delta|^2,
 \end{align*}
 then
\begin{align*}
\mathcal{L}  V(\tilde P_\Delta)\leq-C_5 |\tilde P_\Delta|^2,\quad \forall |\tilde P_\Delta|<\varepsilon,
 \end{align*}
for some $C_5>0$.
This concludes the proof.
 \end{proof}
 
\cref{thm1} and \cref{thm2} imply that the DMRE behaves very similar to an exponentially stable linear system in a  neighborhood of $P^*$, and thus exhibits a series of nice properties.
However, the stability and robustness results in these two theorems are of limited use in practice, since they hold only in a neighborhood of $P^*$.
In order to obtain desirable transient performance for the DMRE in a sufficiently large compact set, we need to  design carefully the cost \cref{cost_equ1}.
Indeed, the following corollary shows that by choosing $Q$ and $R$ properly, we can guarantee the {\it semi-global} exponential stability of \cref{dmre_equ1} at $P^*$. By "semi-global", we mean that the domain of attraction is bounded but can be made as large as possible \cite{Sastry1999}.

  \begin{corollary}\label{thm3}
Given $Q_0=Q_0^T>0$ and $R_0=R_0^T>0$, for any compact set $\mathcal{S}_0\subset\mathcal{S}^n_+$, there exists a constant $\lambda>0$, such that by choosing $Q=\lambda Q_0$ and $R=\lambda R_0$, each trajectory of \cref{dmre_equ1} starting at $P(0)\in\mathcal{S}_0$ converges exponentially to $P^*$.
 \end{corollary}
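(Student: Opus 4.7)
The strategy is to reduce the claim to the standard DMRE with the fixed weights $(Q_0,R_0)$ via a scaling trick, then paste the classical global asymptotic convergence of \cite{Willems1971,Kucera1973} together with the local exponential stability supplied by \cref{thm1}.

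First I would observe that if $P^*_\lambda$ denotes the ARE solution for $(\lambda Q_0,\lambda R_0)$, then $P^*_\lambda = \lambda\bar P^*$, where $\bar P^*$ solves the ARE for $(Q_0,R_0)$. Substituting $P = \lambda\bar P$ in \cref{dmre_equ1} transforms the DMRE into the $\lambda$-free equation
\begin{align*}
\dot{\bar P} = A^T\bar P + \bar P A - \bar P B R_0^{-1} B^T \bar P + Q_0,
\end{align*}
with new initial condition $\bar P(0) = P(0)/\lambda \in \mathcal{S}_0/\lambda$. Since $\mathcal{S}_0$ is compact, $\mathcal{S}_0/\lambda$ shrinks to $\{0\}$ as $\lambda\to\infty$. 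Crucially, the ARE-optimal gain $K^* = R_0^{-1}B^T\bar P^*$, and hence the closed-loop matrix $A-BK^*$, are independent of $\lambda$.

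Next, I would apply \cref{thm1} to this rescaled DMRE to extract a neighborhood $\mathcal{N}$ of $\bar P^*$ in $\mathcal{S}^n$ and constants $\bar C,\bar\alpha>0$ such that any trajectory hitting $\mathcal{N}$ at some time $T$ satisfies $|\bar P(t)-\bar P^*|\leq \bar C e^{-\bar\alpha(t-T)}$ thereafter. The Willems--Kucera theorem applied to the specific trajectory $\bar P_\circ$ from $\bar P_\circ(0)=0$ furnishes a finite time $T$ with $\bar P_\circ(T)$ in the interior of $\mathcal{N}$. Continuous dependence of the ODE on initial data on the compact interval $[0,T]$, which I would quantify via Gronwall's inequality applied to the locally Lipschitz right-hand side on the invariant bounded set guaranteed by part (i) of \cref{thm2} (with $\Delta\equiv 0$), yields a neighborhood $\mathcal{U}$ of $0$ such that every trajectory from $\mathcal{U}$ still reaches $\mathcal{N}$ by time $T$. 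Choosing $\lambda$ large enough that $\mathcal{S}_0/\lambda\subset\mathcal{U}$ then forces $\bar P(T)\in\mathcal{N}$ for every $P(0)\in\mathcal{S}_0$.

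To conclude, I would combine the uniform boundedness of $\bar P$ on $[0,T]$ (uniform in the initial condition by compactness of $\mathcal{S}_0/\lambda$) with the exponential decay on $[T,\infty)$, absorbing the bounded transient into an enlarged constant, and rescale via $|P(t)-P^*|=\lambda|\bar P(t)-\bar P^*|$ to recover exponential convergence of $P$ to $P^*$. The step I expect to require the most care is arguing that a single entry time $T$ into $\mathcal{N}$ works simultaneously for every initial condition in $\mathcal{S}_0/\lambda$; this is precisely where compactness together with continuous dependence must be invoked, and it is exactly the place where the freedom to enlarge $\lambda$ is used to absorb any fixed-size set of initial conditions into an arbitrarily small $\bar P$-neighborhood of the origin.
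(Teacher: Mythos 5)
Your proposal is correct, but it takes a genuinely different route from the paper's. The paper picks $\lambda$ \emph{small}: since $P^*=\lambda\bar P^*$ shrinks with $\lambda$ while the compact set $\mathcal{S}_0\subset\mathcal{S}^n_+$ is uniformly bounded below, a small $\lambda$ forces $P^*<P(0)$ for every $P(0)\in\mathcal{S}_0$; monotonicity of the Riccati flow then gives $P^*\leq P(t)$, and the optimal-control interpretation of the DMRE gives $P(t)\leq\hat P(t)$, where $\hat P$ solves the \emph{linear} Lyapunov differential equation \cref{PE_equ1} started from the same $P(0)$. Exponential convergence is then inherited by squeezing from \cref{lem1}, with an explicit rate governed by $A-BK^*$ and no appeal to \cref{thm1}, to continuous dependence, or to a uniform entry time. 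Your argument goes the opposite way ($\lambda$ large, so the rescaled initial set $\mathcal{S}_0/\lambda$ collapses to the origin) and is softer: it uses only the scaling symmetry, the Willems--Kucera asymptotic convergence, the local exponential stability of \cref{thm1}, and a compactness/continuous-dependence argument to obtain a uniform time of entry into the exponential basin. That is more generic---it would work for any flow possessing a dilation symmetry together with a globally attracting, locally exponentially stable equilibrium---but it yields non-explicit constants and is longer; the step you yourself flag as delicate (a single entry time $T$ valid for all initial conditions) is exactly the step the paper's order-theoretic comparison avoids entirely. Both choices of $\lambda$ are legitimate, since the statement only asserts existence of some $\lambda>0$.
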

 \begin{proof}
 First, note that under the choice of $Q=\lambda Q_0$ and $R=\lambda R_0$, $K^*$ is independent of $\lambda$, as both $Q$ and $R$ are derived from $Q_0$ and $R_0$ by multiplying the same scaling factor.
Moreover, $P^*$ is a linear function of $\lambda$, and $\lim_{\lambda\rightarrow0^+}P^*=0$.
Now, for any $\mathcal{S}_0$, we can find a small enough $\lambda>0$, such that $P^*<P(0)$ for all $P(0)\in\mathcal{S}_0$.
Then, by choosing $\hat P(0)=P(0)$ in \cref{PE_equ1}, we have for any given $t>0$ and $x(-t)\in\mathbb{R}^n$,
 \begin{align*}
x^T(-t)P(t)x(-t)=\inf_u\left\{x^T(0)P(0)x(0)+\int_{-t}^{0}(x^T Qx+u^TRu)ds\right\}\notag\\
\leq (x^*(0))^TP(0)x^*(0)+\int_{-t}^{0}(x^*)^T (Q+(K^*)^TRK^*)x^*ds=x^T(-t)\hat P(t)x(-t),
\end{align*}
where $x^*$ is the solution to system \cref{sys_equ1} with $u=-K^*x^*$ and $x^*(-t)=x(-t)$.
Moreover, by monotonicity \cite[Lemma 1]{Bian2016f}, $P^*\leq P(t)$ for all $t$.
Since by \cref{lem1} $\hat P(t)$ converges to $P^*$ exponentially, $x^T\hat Px$ also converges to $x^TP^*x$ exponentially for all $x$.
Using $x^TP^*x\leq x^TPx\leq x^T\hat Px$, we know $x^T Px$ converges to $x^TP^*x$ exponentially.
Noting that this is true for all $x$, $P$ thus converges to $P^*$ exponentially.
This completes the proof.
 \end{proof}
 
 \begin{remark}
It is easy to see from \cref{thm3} that although multiplying the same scalar to $Q_0$ and $R_0$ does not influence the optimal feedback gain matrix, the transient performance of the DMRE can be  quite different.
{\highlight Given any $P(0)$, by \cref{thm1} and the converse Lyapunov theorem \cite[Theorem 4.14]{Khalil2002}, we can find a Lyapunov function $V$ satisfying
 \begin{align*}
 C_1|\tilde P|^2\leq V(\tilde P)\leq  C_2|\tilde P|^2,\quad
\dot V(\tilde P)\leq-C_3 |\tilde P|^2,\quad  |\partial_x V(\tilde P)|<C_4|\tilde P|,
 \end{align*}
 where  $C_i>0$, $i=1,2,3,4$,
over a  connected compact set including $P(0)$ and $P^*$.}
As a result, \cref{thm3} allows us to extend the result obtained in \cref{thm2} to any compact sets containing $P^*$ in $\mathcal{S}^n_+$.
 \end{remark}
 
If we are allowed to have more freedom on choosing $Q$ and $R$, it is possible to have the following semi-global gain assignment result:
  \begin{corollary}\label{thm4}
  Given $Q_0=Q_0^T>0$ and $R_0=R_0^T>0$, if $B$ has full rank, then for any $\varepsilon>0$ and $\gamma>0$,  there exists $\lambda>0$, such that \cref{dmre_equ2}  admits a finite linear $L^2$ gain from $\Delta$ to $\tilde P_\Delta$ less than or equal to $\gamma$ {\highlight for  $P(0)\in\{ P\in\mathcal{S}^n_+: P\in B_\varepsilon(P^*)\}$}, with $Q=\lambda Q_0$ and $R=o(\lambda) R_0$.
 \end{corollary}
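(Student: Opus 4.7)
The plan is to use the scaling freedom in $(Q,R)$ to make the closed-loop matrix $A-BK^*$ arbitrarily stable, so that the $L^2$ gain of the perturbed DMRE around $P^*$ can be driven below any prescribed $\gamma$. The full-rank hypothesis on $B$ is the enabling ingredient: it ensures that aggressively small $R$ produces a well-conditioned large $K^*$ that pushes every eigenvalue of $A-BK^*$ uniformly far into the left half plane.

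First, I would carry out a scaling analysis on the ARE. Writing $R=\mu R_0$ with $\mu/\lambda\to 0$, substituting the ansatz $P^*=\sqrt{\lambda\mu}\,\bar P$ into \cref{are_equ1} and balancing the dominant terms yields $\bar P BR_0^{-1}B^T\bar P=Q_0$. Because $B$ has full rank, $BR_0^{-1}B^T>0$, so this algebraic equation determines a unique $\bar P>0$ independent of $\lambda$. Hence $K^*\sim\sqrt{\lambda/\mu}\,R_0^{-1}B^T\bar P$ and the spectral abscissa of $A-BK^*$ is of order $-\sqrt{\lambda/\mu}$. Second, I would rescale the state and time by $P=\sqrt{\lambda\mu}\,\hat P$ and $s=\sqrt{\lambda/\mu}\,t$. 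Under this change of variables, \cref{dmre_equ2} becomes
\begin{align*}
\frac{d\hat P}{ds}=\beta^{-1}(A^T\hat P+\hat P A)+Q_0-\hat P BR_0^{-1}B^T\hat P+\frac{\Delta}{\lambda},\qquad \beta=\sqrt{\lambda/\mu}.
\end{align*}
The limiting equation as $\beta\to\infty$ is a $\lambda$-independent DMRE with equilibrium $\bar P$ whose linearization $\tilde{\hat P}\mapsto-\tilde{\hat P}\,BR_0^{-1}B^T\bar P-\bar P\,BR_0^{-1}B^T\tilde{\hat P}$ is Hurwitz, since $BR_0^{-1}B^T\bar P$ is similar to the positive definite matrix $\bar P^{1/2}BR_0^{-1}B^T\bar P^{1/2}$. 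Applying \cref{thm1} to this $\lambda$-independent limit yields a quadratic storage function $\hat V$ and a finite $L^2$ gain $c_0$ from the rescaled disturbance $\Delta/\lambda$ to $\hat P-\bar P$, valid on some fixed ball $|\hat P-\bar P|<r_0$.

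Translating back to original coordinates, $|\tilde P_\Delta|^2=\lambda\mu\,|\hat P-\bar P|^2$ while the time change contributes $\sqrt{\mu/\lambda}$ under the $L^2$ integral, so bookkeeping gives $\int|\tilde P_\Delta|^2\,dt\leq c_0^2(\mu/\lambda)\int|\Delta|^2\,dt+(\text{transient})$, and the original-coordinate gain is bounded by $c_0\sqrt{\mu/\lambda}$. The rescaled neighborhood $|\hat P-\bar P|<r_0$ corresponds to $|\tilde P|<r_0\sqrt{\lambda\mu}$ in original coordinates, which contains $B_\varepsilon(P^*)$ as soon as $\sqrt{\lambda\mu}\geq\varepsilon/r_0$. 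The two conditions $\sqrt{\mu/\lambda}\leq\gamma/c_0$ and $\sqrt{\lambda\mu}\geq\varepsilon/r_0$ are jointly feasible precisely when $\lambda\geq c_0\varepsilon/(\gamma r_0)$; picking such $\lambda$ and $\mu$ delivers the claim.

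The main obstacle is to pass rigorously from the $\lambda$-independent limit equation back to the pre-limit system. The $O(\beta^{-1})$ drift $\beta^{-1}(A^T\hat P+\hat P A)$ in the rescaled DMRE must be shown to contribute only an $o(1)$ penalty to the gain, uniformly on the rescaled ball $|\hat P-\bar P|<r_0$. I expect this to follow by applying Young's inequality on that drift term and absorbing the resulting quadratic penalty into the strict Lyapunov decrease of $\hat V$ guaranteed by \cref{thm1}, but verifying uniformity in $\beta$ across a ball whose original-coordinate radius is fixed requires careful estimates exploiting the uniform quadratic lower bound on $\hat V$ together with the monotonicity structure of \cref{thm3}.
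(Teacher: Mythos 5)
Your proof rests on the same mechanism as the paper's — full row rank of $B$ plus the cheap-control regime $R/Q\to 0$ forces the closed-loop spectrum of $A-BK^*$ arbitrarily far into the left half plane, which is then converted into an arbitrarily small $L^2$ gain — but the execution is genuinely different. The paper obtains the limit $\sqrt{\delta_\lambda^{-1}}P^*_\lambda\to\bar P$ by citing Kwakernaak, reads off the pole placement from the Lyapunov equation \cref{thm4_pf1}, invokes an LMI result to assign the gain of the linearized (Kronecker-sum) system, and then transfers this to \cref{dmre_equ2} ``by similar Lyapunov arguments.'' You instead derive $\bar P S\bar P=Q_0$ (with $S=BR_0^{-1}B^T$, whence $\bar P=S^{-1/2}(S^{1/2}Q_0S^{1/2})^{1/2}S^{-1/2}$) by balancing the ARE, and then rescale state and time so that the entire nonlinear DMRE converges to a $\lambda$-independent limit equation to which \cref{thm1} applies; the gain and the validity region are then tracked explicitly through the change of variables. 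What your version buys is precisely the bookkeeping the paper glosses over: you make visible that the quadratic storage function is valid on a ball of radius $O(\sqrt{\lambda\mu})$ around $P^*$, so that covering a \emph{fixed} $\varepsilon$-ball forces $\sqrt{\lambda\mu}\gtrsim\varepsilon$ and hence $\lambda$ bounded \emph{below} — in tension with the paper's closing phrase ``by choosing a sufficiently small $\lambda$,'' and arguably the only reading under which the statement holds on all of $B_\varepsilon(P^*)$ for fixed $\varepsilon$. Three loose ends remain, none fatal: (a) the balancing ansatz needs a rigorous justification of the limit $P^*/\sqrt{\lambda\mu}\to\bar P$ (the implicit function theorem applied to the rescaled ARE, or the Kwakernaak reference the paper uses, supplies this); (b) you should center the rescaled analysis at the exact equilibrium $\hat P^*_\beta=P^*/\sqrt{\lambda\mu}$ rather than at $\bar P$, since otherwise the $\beta^{-1}(A^T\hat P+\hat P A)$ drift does not vanish at the reference point and leaves a constant offset rather than a term absorbable by Young's inequality; and (c) the uniform-in-$\beta$ absorption of that drift on the ball $|\hat P-\hat P^*_\beta|<r_0$, which you flag as the main obstacle, does go through because the drift is linear in $\hat P-\hat P^*_\beta$ up to the vanishing offset and the limit linearization is uniformly Hurwitz for large $\beta$.
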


\begin{proof}
Since $Q_0$ and $R_0$ are multiplied by different scaling factors, different from \cref{thm3}, $K^*$ depends on $\lambda$ here.
Hence, the first step of our proof is to characterize the influence of $\lambda$ on the eigenvalues of $A-BK^*$.

Note that choosing $Q=\lambda Q_0$ and $R=o(\lambda) R_0$ is equivalent to choosing $Q=Q_0$ and $R=\delta_\lambda R_0$, where $\delta_\lambda=o(\lambda)/\lambda$, in the sense that these two choices lead to the same optimal controller.
Denoting $P^*_\lambda$ as the solution to \cref{are_equ1} with $Q=Q_0$ and $R=\delta_\lambda R_0$, we have 
\begin{align}
(A-BK^*_\lambda)^TP^*_\lambda&+P^*_\lambda(A-BK^*_\lambda)=\notag\\
&-\left(Q_0+\left(\sqrt{\delta_\lambda^{-1}} P^*_\lambda\right) BR_0^{-1}B^T\left(\sqrt{\delta_\lambda^{-1}} P^*_\lambda\right)\right),\label{thm4_pf1}
\end{align}
where $K^*_\lambda=\delta_\lambda^{-1} R_0^{-1}B^TP_\lambda^*$.
Since $B$ has full rank, we know from \cite[(40)]{Kwakernaak1972} that there exists $\bar P\in\mathcal{S}^n$, such that $\lim_{\lambda\rightarrow 0}\sqrt{\delta_\lambda^{-1}}P^*_\lambda=\bar P$.
Thus, for any two  positive  constants $C$ and $\varepsilon$, we can choose a small enough $\lambda$, such that $\left|\sqrt{\delta_\lambda^{-1}} P^*_\lambda-\bar P\right|<\varepsilon$ and
\begin{align*}
\sqrt{\delta_\lambda^{-1}}\left(Q_0+\left(\sqrt{\delta_\lambda^{-1}} P^*_\lambda\right) BR_0^{-1}B^T\left(\sqrt{\delta_\lambda^{-1}} P^*_\lambda\right)\right)>C I_n.
\end{align*}
This, together with the Lyapunov equation \cref{thm4_pf1}, implies that for any $\alpha>0$, we can find $\lambda>0$, such that
\begin{align*}
(A-BK^*_\lambda)^TM&+M(A-BK^*_\lambda)<-\alpha M
\end{align*}
for some constant matrix $M=M^T>0$.
This implies that the  eigenvalues of $A-BK^*_\lambda$ can be placed arbitrarily far to the left from the imaginary axis, by choosing a small enough $\lambda$.



Now, by  the linear matrix inequality argument \cite[Lemma 4.1]{Gahinet1994}, we know that for any $\gamma>0$, one can find a $\lambda>0$, such that the following system admits a linear $L^2$ gain from $v$ to $\xi$ less than or equal to $\gamma$:
  \begin{align*}
  \dot {\xi}=((A-BK_\lambda^*)\oplus (A-BK_\lambda^*))^T \xi+v.
 \end{align*}
By following similar Lyapunov theorem arguments in the proof of \cref{thm1},
we know that for any $\varepsilon>0$, 
the $L^2$ gain of system \cref{dmre_equ2} can be made arbitrarily small on $\{ P\in\mathcal{S}^n_+: P\in B_\varepsilon(P^*)\}$, by choosing a sufficiently small $\lambda$.
This completes the proof.
 \end{proof}

 \begin{remark}
The full-rank condition on $B$ is required to satisfy the matching condition, which is a common assumption in nonlinear gain assignment and robust control literature \cite{Jiang1994,Praly1996,Isidori1999,Liu2014}. 
To relax this assumption in the case of unmatched disturbance, one way is to study
cascaded systems with full rank input matrices via recursive backstepping \cite{Krstic1995}, or a combination of the backstepping and  small-gain approaches \cite{Liu2014}. 
 \end{remark}
 
 
  The following corollary is a direct extension of \cref{thm2}, parts (iii) and (iv), and \cref{thm4}, and thus its proof is omitted.
 
  \begin{corollary}\label{col1}
   Given $Q_0=Q_0^T>0$, $R_0=R_0^T>0$, and $\lambda>0$, define $Q=\lambda Q_0$ and $R=o(\lambda) R_0$.
Suppose $B$ has full rank.
\begin{tasks}[counter-format=(tsk[r]),label-width=4.2ex](1)
\task 
For any $\gamma>0$, if system \cref{thm2_equ1} satisfies the conditions in \cref{thm2}, part (iii), then there exist  $\lambda>0$, such that $(P_\Delta,M)$ is  asymptotically stable at $(P^*,M^*)$.
\task For any  $\gamma>0$ and $\varepsilon>0$, if $\Delta$ satisfies the definition in \cref{thm2}, part (iv), 
then there exists $\lambda>0$, such that $P_\Delta$ is  asymptotically stable at $P^*$ in the mean square sense.
\end{tasks}
 \end{corollary}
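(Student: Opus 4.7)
The plan is to combine the semi-global gain assignment of \cref{thm4} with the small-gain / stochastic Lyapunov analysis already developed in \cref{thm2}, parts (iii) and (iv). The key observation is that both parts of \cref{thm2} rest on a small-gain inequality between a constant $C_1/C_2$ (or $C_1/C_4$) determined by the Lyapunov function for the nominal DMRE and an exogenous gain constant ($\gamma$ in the deterministic case, or the stochastic intensity bound in the stochastic case). In \cref{thm4}, the construction of $Q=\lambda Q_0$ and $R=o(\lambda)R_0$ with small $\lambda$ makes the $L^2$ gain of the DMRE from $\Delta$ to $\tilde P_\Delta$ arbitrarily small on any prescribed ball $B_\varepsilon(P^*)\cap \mathcal{S}^n_+$. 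Hence, by trading the freedom of choosing $\lambda$ against the given $\gamma$, the local small-gain condition used inside \cref{thm2} can be enforced.

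For part (i), I would proceed as follows. Given $\gamma>0$ associated with the IOS Lyapunov function $V_f$ for the $M$-system in \cref{thm2_equ1}, I would first fix an $\varepsilon>0$ large enough that the initial pair $(P_\Delta(0),M(0))$ of interest lies in the ball $B_\varepsilon(P^*)\times B_{\varepsilon_0}(M^*)$. By \cref{thm4}, there is $\lambda>0$ such that on this ball the DMRE admits a linear $L^2$ gain less than any prescribed $\gamma_1>0$; choose $\gamma_1<1/\gamma$. Concretely, this means the Lyapunov function $V$ produced in the proof of \cref{thm2} (in combination with \cref{thm1}) has its constants $C_1,C_2$ satisfying $\gamma<C_1/(\sqrt{2}C_2)$, which is exactly the small-gain inequality used in the proof of \cref{thm2}, part (iii). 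Then I would use the same composite Lyapunov function $\bar V(P,M)=V(P)+(C_2^2/C_1)V_f(M)$, reproduce the chain of inequalities from that proof to obtain $\dot{\bar V}\le -C_3|\tilde P_\Delta|^2 - (C_2^2/(2C_1))|\Delta|^2$ on the prescribed ball, and invoke LaSalle's invariance principle together with the zero-state detectability of \cref{thm2_equ1} to conclude asymptotic stability of $(P_\Delta,M)$ at $(P^*,M^*)$.

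For part (ii), the argument is analogous but in the stochastic setting. Given $\gamma$ and $\varepsilon$, I apply the stochastic counterpart of the small-gain inequality used in \cref{thm2}, part (iv): the Lyapunov estimate produced there is $\mathcal{L}V(\tilde P_\Delta)\le -C_1|\tilde P_\Delta|^2+C_4\sum_i|\Delta_i|^2$, and the condition needed is $\sum_i|\Delta_i|^2<(C_1/C_4)|\tilde P_\Delta|^2$. By choosing $\lambda$ small per \cref{thm4}, the ratio $C_1/C_4$ can be enlarged so that the hypothesis $\sum_i|\Delta_i|^2<\gamma|\tilde P_\Delta|^2$ from part (iv) of \cref{thm2} is implied. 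Ito's lemma then yields $\mathcal{L}V(\tilde P_\Delta)\le -C_5|\tilde P_\Delta|^2$ on $B_\varepsilon(P^*)\cap\mathcal{S}^n_+$, giving mean-square asymptotic stability.

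The main technical obstacle is bookkeeping of how the Lyapunov-function constants $C_1,C_2,C_4$ depend on $\lambda$. In particular, when $\lambda$ changes, $P^*$, $K^*$, and therefore the Lyapunov function $V$ of the linearized system all shift; one must verify that the effective gain constant $C_1/C_2$ (or $C_1/C_4$) can actually be made arbitrarily large in the relevant direction. This is precisely what \cref{thm4} accomplishes through the eigenvalue-placement argument for $A-BK^*_\lambda$, together with the fact that the Lyapunov inequality constants inherit the decay rate of the closed-loop matrix; so the proof reduces to invoking \cref{thm4} and then replaying, almost verbatim, the estimates from \cref{thm2}, parts (iii) and (iv), on the prescribed compact set. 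This justifies omitting the detailed calculations.
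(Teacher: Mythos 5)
Your proposal is correct and follows exactly the route the paper intends: the paper omits the proof of this corollary, stating only that it is a direct extension of \cref{thm2}, parts (iii) and (iv), combined with the semi-global gain assignment of \cref{thm4}, which is precisely the combination you carry out. Your filling-in of the small-gain bookkeeping (making $C_1/C_2$, resp.\ $C_1/C_4$, large enough via small $\lambda$ so that the given $\gamma$ satisfies the small-gain inequality) is the right reading of ``direct extension,'' and the minor imprecision in the choice $\gamma_1<1/\gamma$ (versus $1/(\sqrt{2}\gamma)$) does not affect the argument.
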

 


 \subsection{Robust VI algorithm}
In this subsection, we formally introduce the robust VI (\cref{alg1}) based on the theoretical results in \Cref{RVI_DMRE}.
Note that different from \cref{alg0}, \cref{alg1} includes both a deterministic perturbation term $\Delta_k$ and a stochastic noise term $W_k$ in the updating equation of $P_k$.
\begin{algorithm}[t]
\caption{Continuous-time robust VI}
\label{alg1}
\begin{algorithmic}
\STATE Choose $P_0=P_0^T\geq 0$. $k, q\gets 0$.
\LOOP
    \STATE $ P_{k+1/2}\gets P_k+h_k(A^TP_k+P_kA-P_kBR^{-1}B^TP_k+Q+\Delta_k+W_k)$
        \IF {$P_{k+1/2}>0$ and $| P_{k+1/2}-P_{k}-h_k(\Delta_k+W_k)|/h_k<\bar\varepsilon$}
    \RETURN $P_{k}$ as an approximation to $P^*$
    \ELSIF {$| P_{k+1/2}|>B_q$ or $ P_{k+1/2}\not>0$}
     \STATE $P_{k+1}\gets P_0$. $q\gets q+1$.
     \ELSE
     \STATE $ P_{k+1}\gets  P_{k+1/2}$
    \ENDIF
    \STATE $k\gets k+1$
    \ENDLOOP
 \end{algorithmic}
\end{algorithm}
 
The following theorem shows that \cref{alg1} inherits the robustness property from \cref{dmre_equ2}.
\begin{theorem}\label{thm5}
%
Denote a complete probability space $(\Omega,\mathcal{F}, \mathbb{P})$ equipped with a filtration $\{\mathcal{F}_k\}_{k\in\mathbb{Z}_+}$.
Suppose $Q>0$, $W_k$ is $\mathcal{F}_k$-adapted, $h_k$ is a sequence satisfying the conditions in \Cref{DMREVI}, and $\sum_{k=0}^\infty h_kW_k$ converges with probability one.
Given $\{P_k\}_{k=0}^\infty$ defined in \cref{alg1}, we have  with probability one that,
\begin{tasks}[counter-format=(tsk[r]),label-width=4.2ex](1)
 \task there exist ${\highlight \delta_0}>0$, $N\geq0$, and a compact set $\mathcal{S}_0\subset \mathcal{S}^n_+$ with nonempty interior and $P^*\in \mathcal{S}_0$, such that if $|\Delta_k|<{\highlight \delta_0(1+|P_k|)}$, then $\{P_k\}_{k=N}^\infty\subset \mathcal{S}_0$.
 \task if $\lim_{k\rightarrow \infty}\Delta_k=0$ {\highlight uniformly on any compact set in $\mathcal{S}^n$}, then $\lim_{k\rightarrow\infty}P_k=P^*$.
  \task {\mymark\highlight if $\Delta_k:=\Delta(P_k,M_k)$ is the output to the following updating equation:
  \begin{align}
  M_{k+1}=M_k+h_kf(M_{k},P_k)+Z_k,\label{thm5_equ1}
  \end{align}
  where $\{M_k\}_{k=0}^\infty$ is bounded in $B_{\varepsilon_0}(M^*)$ under a projection term $Z_k$, 
then there exists $\gamma>0$, such that if the conditions in part (iii) of \cref{thm2} are satisfied, 
we have $\lim_{k\rightarrow\infty}(P_k,M_k)=(P^*,M^*)$ locally.}
%
\end{tasks}
\end{theorem}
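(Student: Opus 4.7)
The plan is to view \cref{alg1} as an Euler discretization of the perturbed DMRE \cref{dmre_equ2} and to transfer the continuous-time robustness properties established in \cref{thm1,thm2,thm3} to the discrete-time iteration via an ODE-method / discrete Lyapunov argument, using the diminishing step size $h_k$ and the a.s.\ convergence of $\sum_k h_k W_k$ to control the stochastic perturbation.

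For part (i), I would first invoke \cref{thm3} to pick a compact set $\mathcal{S}_0 \subset \mathcal{S}^n_+$ with $P^* \in {\rm int}(\mathcal{S}_0)$ on which the unperturbed DMRE is semi-globally exponentially stable, then use the Lyapunov function $V$ from \cref{thm1} to build a sublevel set $\{V \le c\}$ strictly contained in $\mathcal{S}_0$ and strictly containing $P^*$. Writing the update in the form $P_{k+1} = P_k + h_k F(P_k) + h_k \Delta_k + h_k W_k$, with $F(P) = A^T P + PA - PBR^{-1}B^T P + Q$, one obtains the discrete decrease
\[
V(\tilde P_{k+1}) - V(\tilde P_k) \le -h_k C_6 |\tilde P_k|^2 + h_k C'|\partial_x V(\tilde P_k)|\,|\Delta_k| + O(h_k^2)(1+|P_k|)^2 + h_k \langle \partial_x V(\tilde P_k), W_k\rangle_F,
\]
where the last term's partial sums converge a.s.\ by assumption. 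Choosing $\delta_0$ small enough so that the $|\Delta_k|$ contribution is dominated by the negative quadratic in the sublevel set, and using the restart mechanism (which takes over when $P_{k+1/2} \not> 0$ or $|P_{k+1/2}| > B_q$), I would argue that the number of restarts is a.s.\ finite: since $B_q \to \infty$ and $h_k \to 0$, eventually each fresh trajectory from $P_0$ enters $\mathcal{S}_0$ before triggering a restart. This yields the desired $N$ beyond which $P_k \in \mathcal{S}_0$.

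For part (ii), with $\{P_k\}_{k \ge N}$ confined to the compact set $\mathcal{S}_0$ and $\Delta_k \to 0$ uniformly on compacts, the same Lyapunov inequality gives
\[
V(\tilde P_{k+1}) \le V(\tilde P_k) - h_k C_6 |\tilde P_k|^2 + \varepsilon_k h_k + \eta_k,
\]
with $\varepsilon_k \to 0$ deterministically and $\sum_k \eta_k$ summable a.s. A Robbins--Siegmund-type supermartingale argument then gives $V(\tilde P_k) \to 0$, hence $P_k \to P^*$. For part (iii), I would combine the composite Lyapunov function $\bar V(\tilde P, \tilde M) = V(\tilde P) + \frac{C_2^2}{C_1} V_f(\tilde M)$ used in the proof of \cref{thm2}(iii) with the discrete analogue of the IOS decrease \cref{thm2_equ2} along \cref{thm5_equ1}. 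Under the small-gain condition $\gamma < C_1/(\sqrt{2}C_2)$, the projection term $Z_k$ keeping $M_k \in B_{\varepsilon_0}(M^*)$, and the zero-state detectability hypothesis, a telescoping and LaSalle-type argument yields local asymptotic convergence $(P_k, M_k) \to (P^*, M^*)$ a.s.

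The main obstacle will be the interplay of three non-smooth features in a single a.s.\ argument: the restart mechanism (which makes the iteration non-monotone and discontinuous in $k$), the projection $Z_k$ in part (iii) (whose jumps must be shown not to increase $\bar V$, e.g.\ by choosing the projection to target a convex set on which $V_f$ is non-expanding), and the stochastic residue $W_k$. Standard ODE-method results for stochastic approximation apply to smooth iterations with bounded iterates; here I must first prove boundedness via (i), then handle the higher-order $O(h_k^2)(1+|P_k|^2)^2$ terms from the quadratic $PBR^{-1}B^T P$ (which are controlled on $\mathcal{S}_0$), and only then invoke the Lyapunov decrease. Carefully separating the analysis on the ``post-restart'' subsequence and verifying that the supermartingale-convergence hypotheses are met on this subsequence is where the bulk of the technical work lies.
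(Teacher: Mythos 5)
Your overall strategy (treat \cref{alg1} as an Euler discretization of \cref{dmre_equ2}, prove boundedness first, then convergence via a Lyapunov/ODE-method argument, and use the composite function $\bar V$ for part (iii)) is aligned with the paper's, but two of your key steps do not go through as written. First, for part (i) you propose to build the invariant sublevel set from the Lyapunov function of \cref{thm1} together with \cref{thm3}. The function from \cref{thm1} satisfies $\dot V\leq -C_6|\tilde P|^2$ only on a small ball $|\tilde P|<\varepsilon$, and \cref{thm3} cannot be invoked to enlarge it here: that corollary achieves semi-global stability by \emph{redesigning} $Q=\lambda Q_0$ and $R=\lambda R_0$, whereas in \cref{thm5} the pair $(Q,R)$ is fixed and only $Q>0$ is assumed. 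The paper instead uses \cref{lem0} (the converse Lyapunov theorem on $\mathcal{S}^n$) to obtain a \emph{proper} Lyapunov function $\mathcal{V}$ on the entire region of attraction, with $\mathcal{V}\to\infty$ at the boundary and compact sublevel sets, and then runs an up-crossing/contradiction argument on the band $\{C_0\leq\mathcal{V}\leq C_1\}$. Without a function of this global type your sublevel-set construction has nothing to anchor it outside a small neighborhood of $P^*$.

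Second, your Robbins--Siegmund step for part (ii) (and implicitly the telescoping in parts (i) and (iii)) treats $\eta_k=h_k\langle\partial_x V(\tilde P_k),W_k\rangle_F$ as a.s.\ summable. The hypothesis is only that the matrix series $\sum_k h_kW_k$ converges a.s.; $W_k$ is \emph{not} assumed to be a martingale difference (that stronger assumption appears only in \cref{col3}), and convergence of $\sum_k h_kW_k$ does not imply convergence of $\sum_k h_k\langle g_k,W_k\rangle_F$ when the adapted weights $g_k=\partial_x V(\tilde P_k)$ vary with $k$ (Abel summation leaves a residual $\sum_k\langle g_{k+1}-g_k,S_k-S_\infty\rangle_F$ with $|g_{k+1}-g_k|=O(h_k)$ and $\sum_k h_k=\infty$). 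The paper sidesteps this in two ways you would need to adopt: in the boundedness argument it freezes the gradient at the start of each crossing window, so the noise enters only through $\langle\partial_x\mathcal{V}(P_{k_j}),\sum_{i=k_j}^{L_\varepsilon(j)-1}h_iW_i\rangle_F$, which vanishes by the Cauchy property of the convergent series; and in part (ii) it passes to continuous-time interpolations and shifted processes, establishes relative compactness in $\mathcal{D}([0,T],\mathcal{S}^n)$, shows the interpolated noise and projection processes vanish in the limit, and identifies the limit with the ODE $\dot P=\mathcal{R}(P)+\Delta$ before invoking \cref{thm2}. Your sketch of part (iii) is otherwise consistent with the paper's (coupled Lyapunov function plus the boundedness and ODE-limit machinery of parts (i)--(ii)), but it inherits both of the above gaps.
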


\begin{proof}
Before proving the part (i), we denote an operator $\mathcal{R}:\mathcal{S}^n\rightarrow \mathcal{S}^n$, such that 
\begin{align*}
\mathcal{R}(P)=A^TP+PA-PBR^{-1}B^TP+Q. 
\end{align*}

Suppose $P_0\neq P^*$.
By \cref{lem0},
we know there exists a smooth Lyapunov function $\mathcal{V}: R_A\rightarrow \mathbb{R}_+$, where $R_A\subset\mathcal{S}^n$ is the region of attraction of $P^*$,  such that 
\begin{align*}
\langle\partial_x \mathcal{V}(P),\mathcal{R}(P)\rangle_F&<0, \quad \mathcal{V}(P)>0,\quad \forall P \in R_A \setminus \{P^*\},\\
\lim_{P\rightarrow \partial R_A}\mathcal{V}(P)&= \infty,\quad \langle\partial_x \mathcal{V}(P^*),\mathcal{R}(P^*)\rangle_F=0,\quad \mathcal{V}(P^*)=0.
\end{align*}
Note that $\mathcal{V}$ defined here is different from the Lyapunov function used in the proof of  \cref{thm1}.
As a result, $\{P:\mathcal{V}(P)\leq C\}$ is a compact subset of $R_A$, for all $C>0$.
Then, there exist $C_0>0$ and $C_1>0$, such that $C_0<\mathcal{V}(P_0)<C_1$.
Furthermore, we can find a sufficiently small constant ${\highlight \varepsilon_\delta}>0$, such that  for all $|\zeta|<{\highlight \varepsilon_\delta}$,
\begin{align}
 \sup_{ \{P:C_0\leq \mathcal{V}(P)\leq C_1\}} \{\langle\partial_x \mathcal{V}(P),(\mathcal{R}(P)+\zeta)\rangle_F\}=- \delta,\label{thm5_pf_equ1}
\end{align}
for some $\delta>0$.

By contradiction, suppose $\{P_k\}_{k=0}^\infty$ is unbounded.
Then, there exists an up-crossing interval $[C_2,C_3]$, with $\mathcal{V}(P_0)<C_2<C_3<C_1$, such that $\{\mathcal{V}(P_k)\}_{k=0}^\infty$ crosses this interval from below infinitely many times.

From the conditions on $W_k$, we know there exists  $E\in\mathcal{F}$ with $\mathbb{P}(E)=1$, such that for all $\omega\in E$, $\{W_k(\omega)\}_{k=0}^\infty$ is bounded.
Fixing $\omega\in E$, we can define two subsequences $\{P_{k_j}\},\{P_{k_j'}\}\subset\{P_k\}$, such that 
\begin{align}
\mathcal{V}(P_{k_j-1})<C_2\leq \mathcal{V}(P_m)<C_3<\mathcal{V}(P_{k_j'}),\ \forall k_j\leq m<k_j'.\label{thm5_pf_equ4}
\end{align}
Choose a sufficiently small $\varepsilon>0$, such that for any $P\in \{P_{k_j}\}$, $B_\varepsilon(P)\subset \{P\in\mathcal{S}^n_+:\mathcal{V}(P)<C_1\}$.
Suppose $q$ is sufficiently large. 
Then,  for any $j\in\mathbb{Z}_+$,
\begin{align}
\varepsilon<|P_{L_\varepsilon(j)}-P_{k_j}|&=\left|\sum_{i=k_j}^{L_\varepsilon(j)-1}h_i(\mathcal{R}(P_i)+\Delta_i+W_i)\right|\notag\\
&\leq\sum_{i=k_j}^{L_\varepsilon(j)-1}h_i(|\mathcal{R}(P_i)|+|\Delta_i|+|W_i|)\leq {\highlight \varepsilon_C}\sum_{i=k_j}^{L_\varepsilon(j)-1}h_i,\label{thm5_pf_equ2}
\end{align}
where $L_\varepsilon(j)=\inf\{i\geq k_j:|P_i-P_{k_j}|>\varepsilon\}$, and  ${\highlight \varepsilon_C}>0$ is a constant independent of $j$.


Then, by \cref{thm5_pf_equ1} and the assumption on $W_k$, one has
\begin{align*}
&\mathcal{V}(P_{L_\varepsilon(j)})-\mathcal{V}(P_{k_j})\\
=&\int_0^1\langle\partial_x\mathcal{V}(P_{k_j}+t(P_{L_\varepsilon(j)}-P_{k_j})),(P_{L_\varepsilon(j)}-P_{k_j})\rangle_Fdt\\
=&\langle\partial_x\mathcal{V}(P_{k_j}),(P_{L_\varepsilon(j)}-P_{k_j})\rangle_F\\
&+\int_0^1\int_0^1\langle\frac{d}{ds}\partial_x\mathcal{V}(P_{k_j}+st(P_{L_\varepsilon(j)}-P_{k_j})),(P_{L_\varepsilon(j)}-P_{k_j})\rangle_Fdsdt\\
=&\sum_{i=k_j}^{L_\varepsilon(j)-1}h_i\langle\partial_x\mathcal{V}(P_{k_j}),(\mathcal{R}(P_{k_j})+\bar \Delta_{i,j})\rangle_F
+\langle\partial_x\mathcal{V}(P_{k_j}),\sum_{i=k_j}^{L_\varepsilon(j)-1}h_iW_i\rangle_F\\
&+\int_0^1\int_0^1\langle\frac{d}{ds}\partial_x\mathcal{V}(P_{k_j}+st(P_{L_\varepsilon(j)}-P_{k_j})),(P_{L_\varepsilon(j)}-P_{k_j})\rangle_Fdsdt,
\end{align*}
where $\bar \Delta_{i,j}= \Delta_i+\mathcal{R}(P_i)-\mathcal{R}(P_{k_j})$.
Note that $\lim_{j\rightarrow \infty}|P_{L_\varepsilon(j)}-P_{k_j}|=\varepsilon$, as $\lim_{k\rightarrow \infty}h_k=0$.
Then, since $P_{k_j}$ is bounded,
\begin{align*}
\lim_{j\rightarrow \infty}\left|\int_0^1\int_0^1\langle\frac{d}{ds}\partial_x\mathcal{V}(P_{k_j}+st(P_{L_\varepsilon(j)}-P_{k_j})),(P_{L_\varepsilon(j)}-P_{k_j})\rangle_Fdsdt \right|= O(\varepsilon^2).
\end{align*}
Since $\lim_{j\rightarrow \infty}\sum_{i=k_j}^{L_\varepsilon(j)-1} h_i W_i=0$, there exists a sufficiently large $\bar j$, such that for all $j>\bar j$, by choosing sufficiently small $\varepsilon$ and $\delta_0$, we have 
$|\bar \Delta_{i,j}|<{\highlight \varepsilon_\delta}$, and by \cref{thm5_pf_equ1} and \cref{thm5_pf_equ2} it follows that
\begin{align*}
\mathcal{V}(P_{L_\varepsilon(j)})-\mathcal{V}(P_{k_j})
&\leq\langle\partial_x\mathcal{V}(P_{k_j}),\sum_{i=k_j}^{L_\varepsilon(j)-1}h_iW_i\rangle_F-\delta\sum_{i=k_j}^{L_\varepsilon(j)-1}h_i +O(\varepsilon^2)\\
&\leq \langle\partial_x\mathcal{V}(P_{k_j}),\sum_{i=k_j}^{L_\varepsilon(j)-1}h_iW_i\rangle_F-{\highlight \delta\varepsilon/{\highlight \varepsilon_C}}+O(\varepsilon^2)<0.
\end{align*}
This implies that for a large enough $k$, if $P_k\in\{P_{k_j}\}$, then there exists $k'>k$, such that $ \mathcal{V}(P_{k'})<C_2$, and $P_i$ stays in a $\varepsilon$-neighborhood of $P_k$ for $k\leq i\leq k'$.
{\highlight Thus, $P_k$ is bounded, and the proof of part (i) is concluded by contradiction.}

Now, we prove part (ii).
First, rewrite the updating equation in \cref{alg1} as
\begin{align*}
P_{k+1}= P_k&+h_k(\mathcal{R}(P_k)+\Delta_k+W_k)+Z_k,\qquad   k\geq N,\ P_k\in\mathcal{S}_0,
\end{align*}
where  $N$ is chosen as in part (i), and {\mymark the projection term $Z_k$} is defined as
\begin{align*}
{\mymark Z_k=
\begin{cases}
P_0- P_{k+1/2},& \text{if $ P_{k+1/2}\not\in \mathcal{S}_0$},\\
0,&\text{otherwise}.
\end{cases}}
\end{align*}
Define the following continuous-time interpolation:
\begin{align*}
P^0(t)=
\begin{cases}
P_0, & t\leq0,\\
P_k,&  t\in[t_k,t_{k+1}),
\end{cases}
\quad
\Delta^0(t)=
\begin{cases}
\Delta_0, & t\leq0,\\
\Delta_k,&  t\in[t_k,t_{k+1}),
\end{cases}
\end{align*}
where  $t_0=0$ and $t_k=\sum_{i=0}^{k-1}h_i$, for $k\geq 1$.
Define the shifted process $P^k(t)=P^0(t_k+t)$ and $\Delta^k(t)=\Delta^0(t_k+t)$, for all $ t\in\mathbb{R}$.

Then, we have for all $k\geq N$ and $t\geq0$ that
\begin{align}
P^k(t)&=P_k+\sum_{i=k}^{m(t+t_k)-1}h_i(\mathcal{R}(P_i)+\Delta_i)+W^k(t)+Z^k(t)\notag\\
&=P_k+H^k(t)+e^k(t)+W^k(t)+Z^k(t),\label{pk_equ1}
\end{align}
where 
\begin{align*}
H^k(t)&  =\int_0^t   (\mathcal{R}(P^k (s))+ \Delta^k(s))ds,\quad Z^k(t) =  \sum_{i=k}^{m(t+t_k)-1}  Z_i,\\
   W^k(t) &=  \sum_{i=k}^{m(t+t_k)-1}  h_iW_i,\quad
 m(t)=
\begin{cases}
j,& 0\leq t_j\leq t< t_{j+1},\\
0,&t<0,
\end{cases}
\end{align*}
$e^k(t)$ is due to replacing the second  term on the right-hand side of the first equality in \cref{pk_equ1} with $H^k(t)$.
By convention, the above definition assumes $\sum_{i=k}^{m(t+t_k)-1}*=0$, when $0\leq t<h_k$.
Note that for all $\omega\in E$, $W^k(\cdot,\omega)$ converges to $0$ uniformly on any finite time interval.

Fixing  $T>0$ and following the proof of \cite[Theorem 3.3]{Bian2016b}, we can show that  $\{H^k(\cdot)\}_{k=N}^\infty$, $\{Z^k(\cdot)\}_{k=N}^\infty$, and  $\{e^k(\cdot)\}_{k=N}^\infty$ are all relatively compact in $\mathcal{D}([0,T],\mathcal{S}^{n})$, where $\mathcal{D}([0,T],\mathcal{S}^{n})$ denotes the space of functions from $[0,T]$ to $\mathcal{S}^{n}$, that are right-continuous with left-hand limits, equipped with the Skorokhod topology \cite{Skorokhod1956}.
Following the procedure in the proof of \cite[Lemma 3.4]{Abounadi2002}, one can show that the limit of $\{Z^k(\cdot)\}_{k=N}^\infty$ is identically $0$.
Then, the limit of $\{P_k,\Delta_k\}$ satisfies
\begin{align*}
\dot P= \mathcal{R}(P)+\Delta,
\end{align*}
where $\Delta$ converges to $0$ by its definition.
By part (i), we know $\{P_k\}_{k=N}^\infty$ remains in the region of attraction of $P^*$.
Thus, part (ii) is established by \cref{thm2}, part (ii) and the Part 2 of the proof of \cite[Theorem 5.2.1]{Kushner2003}.


{\highlight
To prove part (iii), we note from the part (iii) of \cref{thm2} that the following coupled system is  asymptotically stable at $(P^*,M^*)$:
\begin{align*}
\dot P= \mathcal{R}(P)+\Delta(P,M),\\
\dot M = f(M,P).
\end{align*}
Moreover, by defining $\bar {\mathcal{V}}(P, M) = \bar V(P-P^*, M-M^*)$, where the Lyapunov function $\bar V$ is defined in the proof of \cref{thm2}, we also have
\begin{align*}
\langle\partial_{x_1} \bar {\mathcal{V}}( P, M),(\mathcal{R}(P)+\Delta+\zeta)\rangle_F+\langle\partial_{x_2} \bar {\mathcal{V}}( P, M),f(M,P)\rangle_F<0,
\end{align*}
for all $(P, M)$ in a small neighborhood of $(P^*,M^*)$ with $( P, M)\neq(P^*,M^*)$.
Since $M_k$ is bounded, $\Delta_k$ is bounded for all bounded $P_k$.
Now, following the steps in part (i), we have $(P_k,M_k)$ is bounded, provided $P_0$ stays in a small neighborhood of $P^*$, and $\varepsilon_0$ is small enough.
Applying the analysis in part (ii), we know $(P_k,M_k)$ converges to the solution to the above coupled ODE. 
By the part (iii) of \cref{thm2}, this completes the proof.
}
\end{proof}



\begin{remark}
The first two parts of \cref{thm5} focus on handling static uncertainties represented by either a bounded external disturbance input or a bounded function of $P_k$.
The third part of \cref{thm5} deals with dynamic uncertainty, and hence is more suitable for developing decentralized VI algorithms.
\end{remark}

The following corollary is a direct extension of  \cref{col1}, part (i) and \cref{thm5}, part (iii), and thus its proof is omitted:
 \begin{corollary}\label{col2}
Given $Q_0=Q_0^T>0$, $R_0=R_0^T>0$, and $\lambda>0$, denote $Q=\lambda Q_0$ and $R=o(\lambda) R_0$. 
Suppose $B$ has full rank.
{\highlight
For any $\gamma>0$, if the conditions in the part (iii) of \cref{thm2} are satisfied, then there exist $\lambda>0$, such that $\lim_{k\rightarrow\infty}(P_k,M_k)=(P^*,M^*)$ locally, where $M_k$ is defined in \cref{thm5_equ1}. }
 \end{corollary}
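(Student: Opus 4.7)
The plan is to reduce the statement to two previously established results, Corollary~\ref{thm4} (semiglobal $L^2$-gain assignment) and Theorem~\ref{thm5}, part (iii) (robustness of the discrete VI to a coupled dynamic uncertainty). The logic is the discrete-time analog of Corollary~\ref{col1}, part (i): use the freedom in $\lambda$ afforded by $Q=\lambda Q_0$, $R=o(\lambda)R_0$ to make the $L^2$ gain of the DMRE \cref{dmre_equ2} from $\Delta$ to $\tilde P_\Delta$ arbitrarily small, so that the IOS small-gain hypothesis that drives Theorem~\ref{thm5}, part (iii) is automatically satisfied.

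Concretely, I would first extract the explicit threshold $\gamma^\star$ implicit in the proof of Theorem~\ref{thm2}, part (iii): the composite Lyapunov candidate $\bar V(P,M)=V(\tilde P)+(C_2^2/C_1)V_f(\tilde M)$ is strictly dissipative along the coupled continuous-time dynamics whenever $\gamma<C_1/(\sqrt{2}\,C_2)$, where $C_1,C_2>0$ are the local quadratic Lyapunov constants attached to $V$ on a neighborhood of $P^*$ in $\mathcal{S}^n$. Next, because $B$ has full rank, Corollary~\ref{thm4} allows me to select $\lambda>0$ small enough that the $L^2$ gain of \cref{dmre_equ2} on $\{P\in\mathcal{S}^n_+:P\in B_\varepsilon(P^*)\}$ is strictly below $\gamma^\star$. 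For this $\lambda$, the chained Lyapunov dissipation used in Theorem~\ref{thm5}, part (iii) is in force; that theorem then yields $\lim_{k\to\infty}(P_k,M_k)=(P^*,M^*)$ locally, with $\{M_k\}$ held in $B_{\varepsilon_0}(M^*)$ by the projection term in \cref{thm5_equ1}.

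The delicate point, and the one I would check most carefully, is that $C_1$, $C_2$, the equilibrium $P^*$, and thus the threshold $\gamma^\star$ all vary with $\lambda$, so the gain bound and the threshold must be tracked simultaneously. I would therefore mirror the scaling argument in the proof of Corollary~\ref{thm4}, exploiting the limit $\sqrt{\delta_\lambda^{-1}}P^*_\lambda\to\bar P$ established via \cref{thm4_pf1}, to confirm that the assigned $L^2$ gain vanishes faster than $\gamma^\star(\lambda)$ degrades as $\lambda\to 0^+$; equivalently, one checks that on the relevant $\varepsilon$-ball the Lyapunov constants remain bounded away from degeneracy while the closed-loop spectrum is pushed arbitrarily far into the left half-plane. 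Once this matching is verified, the small-gain chain closes and the corollary follows without further computation.
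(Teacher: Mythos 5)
Your proposal is correct and follows essentially the same route the paper intends: the paper omits the proof, declaring \cref{col2} a direct extension of \cref{col1}, part (i) (itself built from \cref{thm4} and \cref{thm2}, part (iii)) combined with \cref{thm5}, part (iii), which is exactly the chain you unroll. Your additional care in tracking how the Lyapunov constants and $P^*$ scale with $\lambda$ addresses a subtlety the paper leaves implicit, but it does not change the argument's structure.
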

 


\begin{remark}
The boundedness of $M_k$  can be relaxed, by extending the projection term in \cref{thm5_equ1} to the adaptive boundary case as in \cref{alg1}.
The conclusions of \cref{thm5} and \cref{col2} still hold, under minor changes of the proof.
\end{remark}


The following corollary plays an important role in developing adaptive optimal control methods on the basis of the proposed robust VI framework.

\begin{corollary}\label{col3}
%
Denote a complete probability space $(\Omega,\mathcal{F}, \mathbb{P})$ equipped with a filtration $\{\mathcal{F}_k\}_{k\in\mathbb{Z}_+}$.
Consider \cref{alg1} with $W_k=\sigma(P_k)v_k$, $\Delta_k=\Delta_k(P_k)$, and $\sum_{k=0}^\infty h_k^2<\infty$, where $\lim_{k\rightarrow\infty}\Delta_k=0$ uniformly on any compact set, $\sigma_i$ are continuous, and $v_k$ is a $\mathcal{F}_k$-adapted martingale difference with finite variance.
Then, $\lim_{k\rightarrow\infty}P_k=P^*$  with probability one.
\end{corollary}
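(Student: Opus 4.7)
The plan is to reduce the claim to \cref{thm5}, parts (i) and (ii), by verifying their two main hypotheses for our specific $W_k$ and $\Delta_k$: almost-sure convergence of $\sum_{k=0}^\infty h_k W_k$ and an envelope bound of the form $|\Delta_k| < \delta_0(1+|P_k|)$ for $k$ large. Once these hold, \cref{thm5}(i) pins $\{P_k\}_{k\geq N}$ inside a compact set $\mathcal{S}_0 \ni P^*$, and \cref{thm5}(ii) delivers $\lim_{k\to\infty} P_k = P^*$ almost surely.

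The central technical step is the a.s.\ convergence of $\sum_{k=0}^\infty h_k \sigma(P_k) v_k$. Since $P_k$ is $\mathcal{F}_{k-1}$-measurable and $v_k$ is a martingale difference with uniformly bounded conditional variance, the partial sums form an $\mathcal{F}_k$-martingale whose conditional quadratic variation is $\sum_k h_k^2 |\sigma(P_k)|^2 \mathbb{E}[|v_k|^2 \,|\, \mathcal{F}_{k-1}]$. I would first apply a localization: for each $M>0$ set $\tau_M := \inf\{k : |P_k|>M\}$, so that on $\{k<\tau_M\}$ continuity of $\sigma$ gives $|\sigma(P_k)| \leq C_M := \sup_{|P|\leq M}|\sigma(P)|$. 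The stopped martingale then has summable quadratic variation (using $\sum h_k^2 < \infty$) and converges almost surely by the $L^2$ martingale convergence theorem. It therefore suffices to prove $\sup_k |P_k| < \infty$ almost surely, because then $\tau_M = \infty$ for $M$ large on that sample path and the full series agrees with the stopped one from some $M$ onwards.

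To obtain a.s.\ boundedness of $\{P_k\}$, I would exploit the adaptive-boundary reset mechanism built into \cref{alg1}. Between consecutive resets (within one ``epoch'' indexed by $q$) one has $|P_k|\leq B_q$, so the noise series over that epoch converges by the truncation argument above. Running the Lyapunov-function argument from the proof of \cref{thm5}(i) epoch-by-epoch --- the Lyapunov function $\mathcal{V}$ cannot repeatedly up-cross a fixed level interval while $h_k\to 0$ continuously shrinks each increment --- shows that the number of epochs is a.s.\ finite, i.e.\ $q$ stabilizes, and hence $\sup_k|P_k|<\infty$ almost surely. Combining this with the preceding step yields the desired a.s.\ convergence of $\sum_{k=0}^\infty h_k W_k$.

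Finally, the envelope condition is immediate from uniform-on-compacts decay of $\Delta_k$: once $\{P_k\}$ is known to lie eventually in a compact set, for any prescribed $\delta_0>0$ we get $|\Delta_k(P_k)| < \delta_0 \leq \delta_0(1+|P_k|)$ for all large $k$, and the same uniform decay gives the $\lim_k \Delta_k = 0$ needed for \cref{thm5}(ii). Applying those two parts of \cref{thm5} then concludes $P_k \to P^*$ almost surely. The main obstacle is precisely the circularity between a.s.\ boundedness of $\{P_k\}$ (needed to control $|\sigma(P_k)|$ and hence the noise quadratic variation) and a.s.\ convergence of the noise series (used by \cref{thm5}(i) to establish boundedness); the truncation-plus-epoch scheme sketched above is designed to break this circularity.
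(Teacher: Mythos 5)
Your proposal is correct and follows essentially the same route as the paper's proof: reduce the claim to almost-sure boundedness of $\{P_k\}$, establish that boundedness via the up-crossing Lyapunov argument from part (i) of \cref{thm5} with the martingale convergence theorem controlling the accumulated noise, and then conclude by part (ii) of \cref{thm5}. The only difference is packaging---the paper breaks the circularity you identify without stopping times or epoch counting, by defining the noise martingale only over the up-crossing index set, where the iterates automatically lie in a fixed compact sublevel region so that $\sigma(P_i)$ is bounded and the restricted martingale has bounded second moment.
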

\begin{proof}
We only need to show that $P_k$ is bounded.
Then, the convergence is proved by the part (ii) of \cref{thm5}.

Again, by contradiction, suppose $\{P_k\}_{k=0}^\infty$ is unbounded.
Following the analysis in the proof of \cref{thm5}, part (i), we  still have
\begin{align*}
\varepsilon<|P_{L_\varepsilon(j)}-P_{k_j}|&=\left|\sum_{i=k_j}^{L_\varepsilon(j)-1}h_i(\mathcal{R}(P_i)+\Delta_i(P_i)+\sigma_i(P_i)v_i)\right|\leq {\highlight \varepsilon_C}\sum_{i=k_j}^{L_\varepsilon(j)-1}h_i,
\end{align*} 
for some ${\highlight \varepsilon_C}>0$, where $\varepsilon$, $k_j$, and $L_\varepsilon(j)$ follow the same definitions in the proof of part (i) of \cref{thm5}.
Since $\lim_{k\rightarrow\infty}\Delta_k=0$  uniformly on any compact set, $\sup_{i\in[k_j, L_\varepsilon(j)]}|\Delta_i(P_i)|$ can be made arbitrarily small, by choosing a large enough $j$.
Then, there exists a sufficiently large $\bar j$, such that for all $j>\bar j$, 
\begin{align}
V(P_{L_\varepsilon(j)})-V(P_{k_j})\leq\langle\partial_xV(P_{k_j}),\sum_{i=k_j}^{L_\varepsilon(j)-1} h_i \sigma_i(P_i)v_i\rangle_F-\delta\varepsilon/{\highlight \varepsilon_C} +O(\varepsilon^2).\label{col3_pf_equ1}
\end{align}

Now, define a sequence $\{M_k\}$, such that 
\begin{align*}
M_k=\sum_{i\in\cup_{j\in\{j\in\mathbb{Z}_+:k_j'\leq k\}}[k_j,L_\varepsilon(j)-1]\cap \mathbb{Z}_+} h_i \sigma_i(P_i)v_i,
\end{align*}
where $k_j'$ is defined in \cref{thm5_pf_equ4}.
Obviously, $\{M_k\}$ is a martingale with respect to $\{\mathcal{F}_k\}$, and $\mathbb{E}(|M_k|^2)$ is bounded, since $P_i$ is bounded,  $\sum_{k=0}^\infty h_k^2<\infty$,  and $v_i$ has finite variance.
By the martingale convergence theorem \cite[Theorem 2.6]{Steele2001}, $M_k$ converges with probability one, and thus $\lim_{j\rightarrow \infty}\sum_{i=k_j}^{L_\varepsilon(j)-1} h_i \sigma_i(P_i)v_i=0$.
This, together with \cref{col3_pf_equ1}, shows that $P_k$ is bounded with probability one.
\end{proof}


\section{ Applications to adaptive/stochastic/decentralized optimal control}\label{sec_applicaiton}
In this section, we provide four applications of the above robust VI method in solving adaptive optimal control problems that appear intractable  using traditional DP methods.

\subsection{VI in the presence of modeling errors}\label{modelerr}
Solving optimal control problems using \cref{alg1} requires precise knowledge of system matrices.
In practice, these system parameters may not be directly available, and are often estimated from measurement data subject to stochastic noise.
In this subsection, we investigate the convergence of \cref{dmre_equ1} under estimated model parameters.


Suppose the system matrix $A$ is not known {\it a priori}, and is approximated by a time series $\{\hat A(t)\}_{t\in\mathbb{R}_+}$, where $\hat A(t)=A+\sum_{i=1}^N\Delta_iv_i(t)$, $N>0$, $\Delta_i\in\mathbb{R}^{n\times n}$ are constants, and $v_i$ denote independent continuous-time Gaussian white noises.

Instead of \cref{dmre_equ1}, let us consider the following equation:
\begin{align}
 \dot P&=\hat A^TP+P\hat A-PBR^{-1}B^TP+Q\notag\\
 &=\mathcal{R}(P)+\sum_{i=1}^N(\Delta_i^TP+P\Delta_i)v_i\notag\\
  &=\mathcal{R}(P)+\sum_{i=1}^N(\Delta_i^T\tilde P+\tilde P\Delta_i)v_i+\sum_{i=1}^N(\Delta_i^T P^*+ P^*\Delta_i)v_i.\label{dmre_equ3}
\end{align}
Since $\Delta_i$ are constants, there exists a constant $\gamma>0$, such that $\sum_{i=1}^N|\Delta_i^TP+P\Delta_i|^2<\gamma|P|^2$.
By \cref{thm2}, part (iv), if $\gamma$ is sufficiently small, then we can find a pair $(Q,R)$ under which there exists a smooth Lyapunov function $V$ satisfying 
   \begin{align*}
\mathcal{L} V(\tilde P)&\leq-C_1 |\tilde P|^2 +\gamma C_2|\tilde P|^2+ \gamma  C_2| P^*|^2\\
&\leq-C_3 |\tilde P|^2 + \gamma  C_2| P^*|^2,\quad \forall P\in\{P\in\mathcal{S}_+^n: P\in B_\varepsilon(P^*)\},
 \end{align*}
for some constants $\varepsilon>0$ and $C_i>0$, $i=1,2,3$.
Moreover, the noise-to-state stability gain \cite {Krstic1998} can be made arbitrarily small by choosing $Q$ and $R$ properly, if $B$ has full rank.

The above inequality shows that the DMRE under noisy measurement of $A$ is either locally or semi-globally (in $\mathcal{S}^n_+$) practically stable, with probability one.
Indeed, due to the additive noise in \cref{dmre_equ3}, $\lim_{t\rightarrow \infty}P(t)$ follows a steady state distribution.

To improve the convergence result, lets consider the following DMRE:
\begin{align}
 \dot P=\left(\frac{1}{t}\int_0^t\hat Ads\right)^TP+P\left(\frac{1}{t}\int_0^t\hat Ads\right)-PBR^{-1}B^TP+Q.\label{dmre_equ4}
\end{align}
By definition,
\begin{align*}
\frac{1}{t}\int_0^t\hat Ads=A+\sum_{i=1}^N\Delta_i\frac{1}{t}\int_0^tv_i(s)ds=A+\sum_{i=1}^N\frac{1}{t}\Delta_iw_i(t),
\end{align*}
where $w_i$ are independent Brownian motions \cite[Chapter 3]{Steele2001}, and the last equality comes from the fact that $v_i$ are independent Gaussian white noises.
By the strong law of large number \cite[Appendix I]{Steele2001}, $\lim_{t\rightarrow\infty}\frac{1}{t}w_i(t)=0$ with probability one.

\begin{theorem}
Denote a complete probability space $(\Omega,\mathcal{F}, \mathbb{P})$ equipped with a filtration $\{\mathcal{F}_t\}_{t\geq0}$.
Suppose $(w_1(t),w_2(t),\cdots,w_N(t))$ is $\mathcal{F}_t$-adapted.
For any $P(0)\in\mathcal{S}^n_+$,  we have $\lim_{t\rightarrow \infty}P(t)=P^*$ with probability one, where $P(t)$ is defined by \cref{dmre_equ4}. 
\end{theorem}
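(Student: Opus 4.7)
The plan is to recast \cref{dmre_equ4} as an instance of the perturbed DMRE \cref{dmre_equ2} whose disturbance vanishes almost surely, and then to invoke \cref{thm2}, part (ii), pathwise. Using the identity $\frac{1}{t}\int_0^t \hat A\,ds = A + \epsilon(t)$ with $\epsilon(t) := \frac{1}{t}\sum_{i=1}^N \Delta_i w_i(t)$ derived just before the theorem statement, \cref{dmre_equ4} becomes
\begin{align*}
\dot P = A^T P + PA - PBR^{-1}B^TP + Q + \Delta(t), \qquad \Delta(t) := \epsilon(t)^T P(t) + P(t)\epsilon(t),
\end{align*}
which has the form \cref{dmre_equ2} with a symmetric, trajectory-dependent disturbance $\Delta$.

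By the strong law of large numbers for Brownian motion, there is an event $E\in\mathcal{F}$ with $\mathbb{P}(E)=1$ on which $\epsilon(t,\omega)\to 0$ as $t\to\infty$; fix $\omega\in E$. Since $\Delta(t,\omega)$ tends to $0$ only if $P(t,\omega)$ stays bounded, the crux is to establish pathwise boundedness of the solution. I would use the quadratic converse-Lyapunov function $V$ built in the proof of \cref{thm1} for $\tilde P = P - P^*$. Computing $\dot V$ along \cref{dmre_equ4} and comparing with \cref{DP_equ1}, the extra contributions are of the form $\epsilon^T(\tilde P + P^*) + (\tilde P + P^*)\epsilon$, so the estimates $|\partial_x V|\leq C_4|\tilde P|$ and $\dot V\leq -C_3|\tilde P|^2$ available in the local exponential-stability neighborhood of $P^*$ yield, after completing the square,
\begin{align*}
\dot V(\tilde P)\leq -\tfrac{C_3}{2}|\tilde P|^2 + C_5|\epsilon(t,\omega)|^2 |P^*|^2,
\end{align*}
once $|\epsilon(t,\omega)|$ is small enough. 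As the forcing $|\epsilon|^2$ vanishes, this keeps $\tilde P$ bounded on $[T_\omega,\infty)$ for large enough $T_\omega$. On the compact interval $[0,T_\omega]$, I dominate $P(t,\omega)$ by the finite-horizon LQR comparison used in the proof of \cref{thm2}, part (i), replacing $Q+\Delta(s)$ by a constant upper bound $\overline Q$ (allowed since $\epsilon$ is locally bounded a.s. for $t$ bounded away from $0$).

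With $P(t,\omega)$ bounded and $\epsilon(t,\omega)\to 0$, the disturbance $\Delta(t,\omega) = \epsilon^T P + P\epsilon$ tends to $0$ and, for $t$ large enough, satisfies $Q+\Delta \geq 0$ automatically (once $|\Delta|\leq \lambda_m(Q)$), together with the upper bound $\sup_t\lambda_M(Q+\Delta)<\infty$. Invoking \cref{thm2}, part (ii), along the shifted pathwise trajectory then yields $P(t,\omega)\to P^*$, and since $\omega\in E$ was arbitrary, $P(t)\to P^*$ with probability one.

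The main obstacle I anticipate is the pathwise boundedness step. The Lyapunov argument based on $V$ is only local to $P^*$, so one must argue separately that $P(t,\omega)$ is eventually captured by this neighborhood rather than escaping first. A subtle point is the small-$t$ behavior: since $w_i(t)/t\sim 1/\sqrt{t}$ near $t=0$, the drift matrix $\bar A(t)$ has an integrable singularity at the initial time, and $P$ could spike before the SLLN pulls the drift back toward $A$. The Riccati comparison exploiting the quadratic damping $-PBR^{-1}B^TP$ handles this, but the estimate must be arranged carefully to keep $P$ in $\mathcal{S}^n_+$ throughout and deliver it into the basin of attraction of \cref{thm1} before $T_\omega$.
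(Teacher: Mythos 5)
Your reduction to \cref{dmre_equ2} with $\Delta = \epsilon^T P + P\epsilon$ and the pathwise use of the SLLN match the paper, and your final step (invoke the argument of \cref{thm2}, part (ii), once boundedness is in hand) is also the paper's. The genuine gap is exactly the one you flag and then do not close: establishing that $P(t,\omega)$ is bounded and gets captured by the basin where the local Lyapunov estimates of \cref{thm1} apply. Your two-part strategy does not work as stated. On $[0,T_\omega]$ you propose the finite-horizon LQR comparison of \cref{thm2}, part (i), after replacing $Q+\Delta(s)$ by a constant $\overline Q$; but here $\Delta = \epsilon^T P + P\epsilon$ is trajectory-dependent, so bounding it by a constant presupposes the very boundedness of $P$ you are trying to prove --- the local boundedness of $\epsilon$ alone does not bound $\Delta$. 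And even granting no finite-time blow-up on $[0,T_\omega]$, nothing in your argument places $P(T_\omega,\omega)$ inside the small neighborhood $|\tilde P|<\varepsilon$ where $\dot V\leq -\tfrac{C_3}{2}|\tilde P|^2 + C_5|\epsilon|^2|P^*|^2$ is valid, so the subsequent local estimate has no starting point for a general $P(0)\in\mathcal{S}^n_+$.

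The paper closes this gap with a \emph{global} comparison rather than a local Lyapunov one: it treats the perturbation as a drift perturbation $A\mapsto A+\Delta$ (not as an additive perturbation of $Q$) and defines $\hat P$ by the linear differential Lyapunov equation
\begin{align*}
\dot{\hat P} = (A-BK^*)^T\hat P + \hat P(A-BK^*) + (K^*)^TRK^* + Q + \Delta^T\hat P + \hat P\Delta,\qquad \hat P(0)=P(0),
\end{align*}
i.e., the cost of the fixed suboptimal feedback $u=-K^*x$ applied to $\dot x=(A+\Delta)x+Bu$. Because this system is linear with Hurwitz nominal part (as in \cref{lem1}) and a vanishing perturbation, $\hat P$ converges to $P^*$ from \emph{any} initial condition; the optimality interpretation of \cref{dmre_equ4} then gives the sandwich $0\leq x^TP(t)x\leq x^T\hat P(t)x$ for all $x$, hence $0\leq P(t)\leq \hat P(t)$. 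This yields boundedness and containment in a compact subset of the region of attraction in one stroke, with no restriction to a neighborhood of $P^*$ and no circularity, after which the argument of \cref{thm2}, part (ii), finishes the proof. If you want to repair your write-up, replace both your finite-horizon bound and your local capture step with this comparison.
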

\begin{proof}
By the definition of $w_i$, we know there exists $E\in\mathcal{F}$ with $\mathbb{P}(E)=1$, such that for any $\omega\in E$, $\lim_{t\rightarrow\infty}\frac{1}{t}\sum_{i=1}^N\Delta_iw_i(\omega,t)=0$.
Now, fix a $\omega\in E$, and denote $\frac{1}{t}\sum_{i=1}^N\Delta_iw_i(\omega,t):=\Delta(t)$.

Define 
\begin{align*}
\dot {\hat P} =(A-BK^*)^T\hat P+\hat P(A-BK^*)+K^*RK^*+Q+\Delta^T\hat P+\hat P\Delta,\quad \hat P(0)=P(0).
\end{align*}
Since $\lim_{t\rightarrow\infty}\Delta(t)=0$, we can easily show that $\hat P$ is globally asymptotically stable at $P^*$ on $\mathcal{S}^n$, by using the same Lyapunov function given in the proof of \cref{thm1}.
On the other hand, we have from \cref{dmre_equ4} that
\begin{align*}
0\leq x^T(-t)P(t)x(-t)=\inf_u\left\{x^T(0)P(0)x(0)+\int_{-t}^{0}(x^T Qx+u^TRu)ds\right\}\notag\\
\leq (x^*(0))^TP(0)x^*(0)+\int_{-t}^{0}(x^*)^T (Q+(K^*)^TRK^*)x^*ds=x^T(-t)\hat P(t)x(-t),
\end{align*}
where $x$ is the solution to the following system:
\begin{align*}
\dot x = (A+\Delta)x+Bu.
\end{align*}
Thus, $P(t)$ is bounded and stays in the region of attraction of $P^*$, for each given $\omega\in E$.
The proof is then concluded following the similar analysis in the proof of \cref{thm2}, part (ii).
\end{proof}


In practice, instead of having $v_i(t)$, we usually have discrete-time white noise sequences $\{v_i(k)\}_{k=0}^\infty$ sampled from the continuous-time series, with constant variance $\sigma_i^2$.
In this case, \cref{dmre_equ3} and \cref{dmre_equ4} can be numerically approximated by
\begin{align*}
  P_{k+1}&=P_k+h_k(\hat A_k^TP_k+P_k\hat A_k-P_kBR^{-1}B^TP_k+Q),\\
    P_{k+1}&=P_k+h_k\left(\left(\frac{1}{k}\sum_{i=0}^k\hat A_i\right)^TP_k+P_k\left(\frac{1}{k}\sum_{i=0}^k\hat A_i\right)-P_kBR^{-1}B^TP_k+Q\right),
\end{align*}
respectively, where $ P_0=P(0)$, $\hat A_k=A+\sum_{i=1}^N\Delta_iv_i(k)$, and $h_k>0$ is the step size.
By the property of Gaussian white noise, we have $\lim_{h\rightarrow 0}\sum_{k=0}^{\lfloor t/h\rfloor}\sqrt{h} v_i(k)=\sigma_i w_i(t)$.
Then the convergence of $P_k$ can also be obtained.

\begin{remark}
Note that the system input matrix $B$ can also be replaced by a time series $\hat B(t)$ in the above analysis.
\end{remark}

\subsection{VI-based ADP for linear continuous-time systems}\label{VIADP}


ADP aims at solving the optimal control problem in real-time using the online input-state or input-output information.
However, in traditional PI-based ADP methodologies, a matrix inverse is calculated in each learning iteration, which may induce a heavy computational burden in real world applications.
Here, we solve this problem from the perspective of robust DP.
This result is especially useful for high-order systems where solving matrix inverse online is not practical.


For all $x\in\mathbb{R}^n$ and $P\in\mathcal{S}^n$, taking the derivative along the solutions of system \cref{sys_equ1}, one has
\begin{align}
\frac{d}{dt}\bar x^T \vecs(P)&=\frac{d}{dt}(x^T Px)=(Ax+Bu)^TPx+x^TP(Ax+Bu)
=\bar z^T \theta(P),\label{VIADP_equ1}
\end{align}
where $z=[x^T,u^T]^T$, 
\begin{align*}
\theta(P)=\vecs\left(
\begin{bmatrix}
PA+A^TP&PB\\
B^TP&0\\
\end{bmatrix}\right),
\end{align*}
and for any $\xi\in \mathbb{R}^q$,  $q\in\mathbb{Z}_+\setminus\{0\}$, 
\begin{align*}
\bar \xi=[\xi_1^2,2\xi_1\xi_2,\cdots,2\xi_1\xi_q,\xi_2^2,2\xi_2\xi_3,\cdots,2\xi_{q-1}\xi_q,\xi_q^2]^T.
\end{align*} 
Note that once $\theta(P)$ is obtained, we can define two linear transformations $\mathcal{T}_A$ and $\mathcal{T}_B$, such that 
\begin{align*}
A^TP+PA=\mathcal{T}_A(\theta(P)),\quad R^{-1}B^TP=\mathcal{T}_B(\theta(P)).
\end{align*}

To provide an online implementation of \cref{alg0}, we need to solve $A^TP+PA$ and $R^{-1}B^TP$ from \cref{VIADP_equ1} using online data only.
First, define an arbitrary time sequence $0\leq t_1<t_2<\cdots<t_{l+1}<\infty$.
Consider the following linear equation:
\begin{align}
\psi_j^T(z)\theta(P)= \phi_j^T(x) \vecs(P),\quad \forall j\geq0,\label{adp_equ4}  
\end{align}
where 
\begin{align*}
\phi_j(x)=\bar x(t_{j+1})-\bar x(t_j),\quad \psi_j(z)=\int_{t_j}^{t_{j+1}}\bar zdt.
\end{align*}

Now, by means of the RLS \cite[Chapter 10]{Haykin2014}, one can define a sequence $\{\theta_k\}_{k=0}^l$ to approximate $\theta(P)$.
To be specific, $\theta_k$ is updated by the following two equations:
\begin{align}
\Sigma_k&=\Sigma_{k-1}-\frac{\Sigma_{k-1}\psi_k\psi_k^T\Sigma_{k-1}}{1+\psi_k^T\Sigma_{k-1}\psi_k},\notag\\
\theta_k&=\theta_{k-1}+\Sigma_k\psi_k\phi_k^T\vecs(P)-\Sigma_k\psi_k\psi_k^T\theta_{k-1},\label{RLS_equ2}
\end{align}
where $\theta_0=0$, and $\Sigma_0=\lambda^{-1}I_q$ for $q=\frac{1}{2}((n+m)^2+n+m)$ and some $\lambda>0$.

\begin{assumption}\label{assum1}
There exist $l_0>0$ and $\alpha>0$, such that 
\begin{align}
 \frac{1}{l}\sum_{j=1}^l\psi_{j}(z)\psi_{j}^T(z)>\alpha I
\end{align}
for all $l>l_0$.
\end{assumption}

If \cref{assum1} is satisfied, then we have from \cref{RLS_equ2} that
 \begin{align*}
\theta_l&=\left(\sum_{j=1}^l\psi_j\psi_j^T+\lambda I_q\right)^{-1}\sum_{j=1}^l\psi_j\phi_j^T\vecs(P)\\
&=\left(\frac{1}{l}\sum_{j=1}^l\psi_j\psi_j^T+\frac{\lambda}{l} I_q\right)^{-1}\frac{1}{l}\sum_{j=1}^l\psi_j\phi_j^T\vecs(P),
\end{align*}
and thus
 \begin{align*}
 \lim_{l\rightarrow \infty}\theta_l=\theta(P).
\end{align*}
By \cref{RLS_equ2} and using mathematical induction,  we see $\theta_k$ is also linear in $P$.
Hence one can find a matrix $M_k\in\mathbb{R}^{\left(\frac{n(n+1)}{2}+mn\right)\times \frac{n(n+1)}{2}}$, such that $\theta_k=M_k\vecs(P)$, with $M_0=0$, for all $k=0,1,\cdots,l$.
Moreover, since \cref{VIADP_equ1} is true for any $P\in\mathcal{S}^n$, by replacing $\theta_k$ in \cref{RLS_equ2} with $M_k\vecs(P)$, we must have
\begin{align}
M_k&=M_{k-1}+\Sigma_k\psi_k\phi_k^T-\Sigma_k\psi_k\psi_k^TM_{k-1}.\label{RLS_equ3}
\end{align}
By the convergence  of $\theta_l$, we have
 \begin{align*}
 \lim_{l\rightarrow \infty}M_l= \lim_{k\rightarrow \infty}M_k=M,
\end{align*}
where $M$ satisfies $\theta(P)=M\vecs(P)$ for all $P\in\mathcal{S}^n$.

Based on the above analysis, the VI-based ADP algorithm for linear continuous-time systems is given in \cref{alg2}.


\begin{algorithm}[t]
\caption{Continuous-time VI-based ADP}
\label{alg2}
\begin{algorithmic}
\STATE Choose $P_0=P_0^T\geq0$, $\Sigma_0=\lambda^{-1}I$, and $M_0=0$. $k, q\gets 0$.
\STATE Apply a measurable locally essentially bounded input $u$ to \cref{sys_equ1}.
\LOOP
    \STATE $\Sigma_k\gets \Sigma_{k-1}-\Sigma_{k-1}\psi_k\psi_k^T\Sigma_{k-1}/(1+\psi_k^T\Sigma_{k-1}\psi_k)$
    \STATE $M_k\gets M_{k-1}+\Sigma_k\psi_k(\phi_k^T-\psi_k^TM_{k-1})$
    \STATE $\hat \theta_k\gets M_k\vecs(P_k)$
    \STATE $ P_{k+1/2}\gets P_k+h_k(\mathcal{T}_A(\hat \theta_k)-\mathcal{T}_B^T(\hat \theta_k)R\mathcal{T}_B(\hat \theta_k)+Q)$
    \IF {$P_{k+1/2}>0$ and $| P_{k+1/2}-P_k|/h_k<\bar\varepsilon$}
     \RETURN $P_{k}$ as an approximation to $P^*$
    \ELSIF {$| P_{k+1/2}|>B_q$ or $ P_{k+1/2}\not>0$}
    \STATE $P_{k+1}\gets P_0$. $q\gets q+1$.
    \ELSE
        \STATE $ P_{k+1}\gets  P_{k+1/2}$
    \ENDIF
    \STATE $k\gets k+1$
 \ENDLOOP
 \end{algorithmic}
\end{algorithm}



\begin{theorem}\label{thm6}
Under \cref{assum1}, we have $\underset{k\rightarrow \infty}{\lim}P_k=P^*$ and $\underset{k\rightarrow \infty}{\lim}K_k=K^*$, where $\{P_k\}_{k=0}^\infty$ is obtained from \cref{alg2}, and $K_k=\mathcal{T}_B(\hat \theta_k)$.
\end{theorem}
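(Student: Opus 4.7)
The plan is to recast Algorithm 2 as an instance of the robust value iteration in Algorithm 1 with $W_k = 0$ and a deterministic perturbation $\Delta_k$ that captures the RLS estimation error, and then to invoke parts (i) and (ii) of \cref{thm5}. Writing $\tilde M_k := M_k - M$, and using the identity $\theta(P) = M\vecs(P)$ established before \cref{alg2}, one has $\hat\theta_k - \theta(P_k) = \tilde M_k \vecs(P_k)$. The update in \cref{alg2} can then be rewritten as
\begin{align*}
P_{k+1/2} = P_k + h_k(\mathcal{R}(P_k) + \Delta_k),\qquad \Delta_k := \mathcal{T}_A(\tilde M_k \vecs(P_k)) - \bigl[\mathcal{T}_B^T(\hat\theta_k)R\mathcal{T}_B(\hat\theta_k) - P_k B R^{-1} B^T P_k\bigr],
\end{align*}
which matches the structure of \cref{alg1}.

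Next, I would establish that $\lim_{k\to\infty} M_k = M$ deterministically. Under \cref{assum1}, the closed-form expression already derived for $M_l$ (just after \cref{RLS_equ3}) yields $M_l = \bigl(\frac{1}{l}\sum_{j=1}^l \psi_j\psi_j^T + \frac{\lambda}{l} I_q\bigr)^{-1}\bigl(\frac{1}{l}\sum_{j=1}^l \psi_j\phi_j^T\bigr)$, and the lower bound $\frac{1}{l}\sum_{j=1}^l\psi_j\psi_j^T > \alpha I$ keeps the inverse uniformly bounded for $l > l_0$. Because $\theta(P) = M\vecs(P)$ is consistent with every sample equation \cref{adp_equ4}, taking the limit identifies $\lim_l M_l = M$. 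Continuity of $\mathcal{T}_A$ and $\mathcal{T}_B$ and smoothness of the quadratic term then give the bound $\sup_{P \in \mathcal{K}}|\Delta_k(P)| \leq c(\mathcal{K})\,|\tilde M_k|$ on any compact set $\mathcal{K}\subset \mathcal{S}^n$, so that $\Delta_k \to 0$ uniformly on compacts, which is precisely the hypothesis of \cref{thm5} part (ii).

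The remaining task is to secure boundedness of $\{P_k\}$ so that part (ii) of \cref{thm5} can actually be applied. This is where part (i) of \cref{thm5} enters: I would use the bound $|\Delta_k| \leq c_1 |\tilde M_k|(1 + |P_k|^2)$ together with the adaptive thresholding $\{B_q\}$ built into \cref{alg2}. Since $|\tilde M_k| \to 0$, for any preselected $B_q$ and $\delta_0$, one can choose $N$ large enough that $c_1|\tilde M_k|(1 + B_q^2) < \delta_0(1 + B_q)$ for $k \geq N$, thereby enforcing the linear bound $|\Delta_k| < \delta_0(1+|P_k|)$ on the threshold ball. The reset step of \cref{alg2} ensures that $q$ stabilizes after finitely many resets (by the same reasoning as in the analysis of \cref{alg0}), so that part (i) of \cref{thm5} yields $\{P_k\}_{k \geq N}\subset \mathcal{S}_0$ for some compact $\mathcal{S}_0 \subset \mathcal{S}^n_+$ containing $P^*$. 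Then part (ii) of \cref{thm5} gives $P_k \to P^*$, and continuity of $\mathcal{T}_B$ together with $M_k \to M$ yields $K_k = \mathcal{T}_B(M_k \vecs(P_k)) \to \mathcal{T}_B(M\vecs(P^*)) = R^{-1}B^T P^* = K^*$.

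The main obstacle is reconciling the quadratic-in-$|P_k|$ growth of $\Delta_k$ with the linear-growth hypothesis of \cref{thm5} part (i). The interplay between the shrinking RLS error $|\tilde M_k|$ and the adaptive threshold $B_q$ is what makes this work: the thresholding confines $P_k$ to a bounded region long enough for $|\tilde M_k|$ to become small, after which the linear bound self-consistently holds and the robust VI machinery of Section~\ref{RVI} takes over. Once boundedness is guaranteed, the rest of the argument is a direct specialization of \cref{thm5}.
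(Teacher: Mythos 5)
Your proposal is correct and follows essentially the same route as the paper: both decompose $\hat\theta_k = \theta(P_k) - (M-M_k)\vecs(P_k)$, absorb the resulting mismatch into a perturbation $\Delta_k$ that vanishes uniformly on compact sets because $M_k\to M$ under \cref{assum1}, and then invoke part (ii) of \cref{thm5}. Your additional discussion of how the adaptive threshold $B_q$ reconciles the quadratic growth of $\Delta_k$ in $|P_k|$ with the linear-growth hypothesis of part (i) is a useful elaboration of a step the paper's proof leaves implicit, but it is a refinement of the same argument rather than a different one.
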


\begin{proof}
Noting that 
 \begin{align*}
 \theta(P_k)
 =M\vecs(P_k)=\hat \theta_k+(M-M_k)\vecs(P_k),
\end{align*}
we have
 \begin{align*}
\mathcal{T}_A(\hat \theta_k)&=A^TP_k+P_kA+\Delta_{1,k}(P_k),\\
\mathcal{T}_B(\hat \theta_k)&=R^{-1}B^TP_k+\Delta_{2,k}(P_k),
\end{align*}
for some  linear functions $\Delta_{1,k}$ and $\Delta_{2,k}$.
Thus, \cref{alg2} is essentially a special case of \cref{alg1} with
 \begin{align*}
\Delta_k&=\Delta_{1,k}(P_k)+\Delta_{2,k}^T(P_k)R\Delta_{2,k}(P_k)+\Delta_{2,k}^T(P_k)B^TP_k+P_kB\Delta_{2,k}(P_k).
\end{align*}
Since $\lim_{k\rightarrow \infty}M_k=M$ under \cref{assum1}, both $\Delta_{1,k}$ and $\Delta_{2,k}$ converge to  zero over any compact set.
Then, the convergence of $P_k$ to $P^*$ is proved by \cref{thm5}, part (ii). 
Following the definition, we then easily have $\lim_{k\rightarrow \infty}K_k=K^*$.
 This completes the proof.
\end{proof}

\begin{remark}
Since the RLS scheme is also robust to stochastic noise \cite[Chapter 10]{Haykin2014},  it is possible to extend \cref{alg2} to the stochastic optimal control framework.
\end{remark}

\subsection{Stochastic ADP for ergodic control problems}\label{stoADP}

In this subsection, we develop an ADP algorithm to solve the ergodic control problem \cite{Borkar2006} for linear stochastic systems with additive noise.

Consider the following system:
\begin{align}
dx&=Axdt+Budt+\sum_{i=1}^{q_x}\sigma_idw_{x,i},\label{sys_equ4}\\
du&=-K_0dx+\sum_{i=1}^{q_u}\sigma_{u,i}dw_{u,i},\label{control_equ4}
\end{align}
where 
$x$, $u$, $A$, and $B$ follow the same definitions as in system \cref{sys_equ1};
$x(0)$ is deterministic;
$w_{x,i}$ and $w_{u,i}$ are independent Brownian motions;  
$q_x,q_u\in\mathbb{Z}_+$;
$\sigma_i\in \mathbb{R}^{n}$ are unknown constant vectors;
$K_0$ is a known initial input matrix; and
$\sigma_{u,i}\in \mathbb{R}^{m}$ are constant vectors.

\begin{remark}\label{rem4_3}
$\sum_{i=1}^{q_x}\sigma_idw_{x,i}$ in \cref{sys_equ4} represents the additive noise in  system \cref{sys_equ4}.
$\sum_{i=1}^{q_u}\sigma_{u,i}dw_{u,i}$ in \cref{control_equ4} serves as an exploration noise, which has been widely used in adaptive control literature to guarantee the persistent excitation condition (PE) \cite[Definition 3.2]{Tao2003}.
Note that besides the Brownian motion, other types of exploration noises can also be used.
For simplicity, we only consider inputs in the form of \cref{control_equ4} here, as in this case system \cref{sys_equ4} is purely driven by Brownian motions, and several standard results from SDE theory can be applied directly.
\end{remark}

\begin{assumption}\label{assum2}
There exists an ergodic stationary probability measure $\mu$ on $\mathbb{R}^n\times \mathbb{R}^m$ for system \cref{sys_equ4}-\cref{control_equ4}.
\end{assumption}

{\highlight A discrete time version  of \cref{assum2} for MDPs has been widely used in approximate DP and RL literature \cite{Tsitsiklis1994,Tsitsiklis1997}.
See \cite{Wonham1967,Haussmann1971} for conditions under which \cref{assum2} holds.}

The objective of ergodic control is to minimize (with probability one)
\begin{align*}
\mathcal{J}(u)=\limsup_{T\rightarrow \infty}\frac{1}{T}\int_0^{T}(x^TQx+u^TRu)dt,
\end{align*}
where $Q=Q^T>0$ and $R=R^T>0$.
It can be shown \cite{Borkar2006} that $\inf_u\mathcal{J}(u)=\sum_{i=1}^{q_3}\sigma_i^TP^*\sigma_i$, with $P^*$ and the optimal controller sharing the same definitions as the ones in \Cref{MP_SD} for deterministic systems.

Now, we derive an online ADP algorithm to solve the above ergodic control problem.
Similar to \cref{VIADP_equ1}, for all $x\in\mathbb{R}^n$ and $P\in\mathcal{S}^n$, by It{\^o}'s lemma \cite[Theorem 8.3]{Steele2001}, we have along the trajectories of \cref{sys_equ4} that 
\begin{align}
{d}(x^T Px)
&=2x^TP(Ax+Bu)dt+\sum_{i=1}^{q_x}\sigma_i^TP\sigma_idt+2x^TP\sum_{i=1}^{q_x}\sigma_idw_{x,i}\notag\\
&= \psi^T(x,u) \theta(P)dt+2x^TP\sum_{i=1}^{q_x}\sigma_idw_{x,i},\label{VIADP_equ4}
\end{align}
where $\psi(x,u)=[\bar x^T,x^T\otimes u^T,1]^T$, and
\begin{align*}
\theta(P)=
\begin{bmatrix}
\vecs(PA+A^TP)\\
\ves(B^TP)\\
\sum_{i=1}^{q_x}\sigma_i^TP\sigma_i
\end{bmatrix}.
\end{align*}
Then, multiplying $ \psi$ on both sides of \cref{VIADP_equ4}, we have on any finite time interval $[0,T]$ that 
\begin{align}
\frac{1}{T}\int_0^T \psi\psi^Tdt\theta(P)&=\frac{1}{T}\int_0^T\psi d(x^TPx)-\frac{2}{T}\int_0^T\psi x^TP\sum_{i=1}^{q_x}\sigma_idw_{x,i}.
\label{stoVIadp_equ1}
\end{align}
Once $\theta(P)$ is obtained from the equation above, we can use the linear transformations defined similarly as the ones in \Cref{VIADP} to find $A^TP+PA$ and $R^{-1}B^TP$:
\begin{align*}
A^TP+PA=\mathcal{T}_A(\theta(P)),\quad R^{-1}B^TP=\mathcal{T}_B(\theta(P)).
\end{align*}




{\highlight
In order to solve \cref{stoVIadp_equ1}, we impose the following assumption:
\begin{assumption}\label{assum3}
$\mu$ satisfies
\begin{align}
\int_{\mathbb{R}^n\times \mathbb{R}^m}\psi\psi^Td\mu>0.\label{equ_PE2}
\end{align}
\end{assumption}
Note that system \cref{sys_equ4}-\cref{control_equ4} is a multidimensional Ornstein-Uhlenbeck process.
Then, its stationary probability measure $\mu$ is also Gaussian, and thus $(x,u)$ has finite $r$-th moment for any $r\in\mathbb{N}_+$.
\cref{assum3} is similar to the PE condition widely used in adaptive control literature (see \cref{rem4_3} for details).}

By a direct extension of Birkhoff's ergodic theorem \cite[Theorem 1.5.18]{Arapostathis2012} and the It{\^o}'s isometry \cite[Theorem 6.1]{Steele2001}, we know\footnote{\highlight
$\|\cdot\|_2$ denotes the matrix 2-norm.
$\mathbb{E}^{\mathbb{P}}$ is the expectation on a probability space $(\Omega,\mathcal{F},\mathbb{P})$, where $\Omega$ is a sample space, $\mathcal{F}$ is a $\sigma$-field of Borel sets of $\Omega$, and $\mathbb{P}$ is a stationary distribution of $(x,u)$ such that $\int_\Omega f(x(\omega),u(\omega))d\mathbb{P}(\omega)=\int_{\mathbb{R}^n\times\mathbb{R}^m} f(x,u)d\mu(x,u)$ for all measurable $f$.}
{\highlight
\begin{align}
\lim_{T\rightarrow\infty}\mathbb{E}^{\mathbb{P}}\left[\left\|\frac{1}{T}\int_0^T\psi\psi^Tdt-\int_{\mathbb{R}^n\times \mathbb{R}^m}\psi\psi^Td\mu\right\|_2^2\right]&=0,\label{ergo_equ1}\\
\lim_{T\rightarrow\infty}\mathbb{E}^{\mathbb{P}}\left[\left\|\frac{1}{T}\int_0^T\psi d(x^TPx)-\frac{1}{T}\int_0^T\psi\psi^Tdt\theta(P)\right\|_2^2\right]&\notag\\
=\lim_{T\rightarrow\infty}\frac{4}{T^2}\mathbb{E}^{\mathbb{P}}\left[\left\|\int_0^T\psi x^TP\sum_{i=1}^{q_x}\sigma_idw_{x,i}\right\|_2^2\right]&=0
\label{ergo_equ2}.
\end{align}}

Choosing a monotone increasing sequence $\{t_k\}_{k=0}^\infty$ with $t_0>0$ and $\lim_{k\rightarrow\infty}t_k=\infty$, we denote 
\begin{align*}
\hat \theta(P,t_k)=\left(\int_0^{t_k}\psi\psi^Tdt\right)^{-1}\int_0^{t_k}\psi d(x^TPx).
\end{align*}
Note from \cref{equ_PE2} and \cref{ergo_equ1} that $t_0$ is a stopping time, and $t_0<\infty$ with probability one, such that $\int_0^{t_k}\psi\psi^Tdt$ is invertible for all $k\geq0$.

For simplicity, denote $\hat \theta_k=\hat \theta(P_k,t_k)$.
The VI-based ADP algorithm for the ergodic control problem is given in \cref{alg5}.


\begin{algorithm}[t]
\caption{Online robust optimal control design for ergodic control}
\label{alg5}
\begin{algorithmic}
\STATE Choose $P_0=P_0^T\geq0$. $k, q\gets 0$. Pick an input $u$ in form of \cref{control_equ4}.
\LOOP
    \STATE $\hat \theta_k=\left(\int_0^{t_k}\psi\psi^Tdt\right)^{-1}\int_0^{t_k}\psi d(x^TP_kx)$.
    \STATE $ P_{k+1/2}\gets P_k+h_k(\mathcal{T}_A(\hat \theta_k)-\mathcal{T}_B^T(\hat \theta_k)R\mathcal{T}_B(\hat \theta_k)+Q)$
    \IF {$P_{k+1/2}>0$ and $| P_{k+1/2}-P_{k}|/h_k<\bar\varepsilon$}
    \RETURN $P_{k}$ as an approximation to $P^*$
    \ELSIF {$| P_{k+1/2}|>B_q$ or $ P_{k+1/2}\not>0$}
    \STATE  $P_{k+1}\gets P_0$. $q\gets q+1$.
    \ELSE
        \STATE $ P_{k+1}\gets  P_{k+1/2}$
    \ENDIF
    \STATE $k\gets k+1$
 \ENDLOOP
 \end{algorithmic}
\end{algorithm}


\begin{theorem}\label{thm7}
Under  \cref{assum2} and \cref{assum3}, we have $\underset{k\rightarrow \infty}{\lim}P_k=P^*$ and $\underset{k\rightarrow \infty}{\lim}K_k=K^*$ with probability one, where $\{P_k\}_{k=0}^\infty$ is obtained from \cref{alg5}, and $K_k=\mathcal{T}_B(\hat \theta_k)$.
\end{theorem}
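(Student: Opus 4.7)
The plan is to cast \cref{alg5} as an instance of the robust VI \cref{alg1} and then invoke \cref{thm5}, part (ii). The key technical task is to control the approximation error $\hat\theta_k - \theta(P_k)$ and show it tends to zero almost surely, uniformly on compact sets of $P$.

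First, I would exploit the linear dependence on $P$. Since $\theta(P)$ is linear in $P$, and since $\hat\theta(P,t_k)$ is obtained by left-multiplying a $P$-independent matrix by $\int_0^{t_k}\psi d(x^TPx)$, both expressions factor as $\hat\theta(P,t_k) = \hat M_k \vecs(P)$ and $\theta(P) = M \vecs(P)$ for appropriate matrices $\hat M_k$ and $M$. From \cref{stoVIadp_equ1}, one has the identity
\begin{align*}
\hat M_k - M = \left(\frac{1}{t_k}\int_0^{t_k}\psi\psi^T dt\right)^{-1}\left[\Xi_k(P) - \left(\frac{1}{t_k}\int_0^{t_k}\psi\psi^T dt - \Sigma_\mu\right)M\right],
\end{align*}
where $\Sigma_\mu = \int \psi\psi^T d\mu$ and $\Xi_k(P) = \frac{2}{t_k}\int_0^{t_k}\psi x^T P\sum_i\sigma_i dw_{x,i}$ contains the martingale noise piece; here the computation is understood as acting componentwise on $\vecs(P)$, so the right-hand side is linear in $P$.

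Next, I would upgrade the $L^2$ limits \cref{ergo_equ1}--\cref{ergo_equ2} to almost sure convergence along the chosen sequence $\{t_k\}$. For \cref{ergo_equ1} this follows from Birkhoff's ergodic theorem applied to the stationary process induced by $\mu$ (\cref{assum2}). For the stochastic integral term, writing it as a continuous $L^2$-martingale with quadratic variation bounded by an ergodic average of $|\psi|^2 |x|^2|P|^2$, the strong law for martingales yields $\frac{1}{t_k}\int_0^{t_k}\psi x^TP\sum_i\sigma_i dw_{x,i}\to 0$ almost surely; one may also choose $t_k$ to grow fast enough that Borel--Cantelli, combined with the Chebyshev bound from \cref{ergo_equ2}, gives the almost sure convergence directly. \cref{assum3} then guarantees that $\frac{1}{t_k}\int_0^{t_k}\psi\psi^T dt$ is eventually invertible with inverse bounded by $2\Sigma_\mu^{-1}$, so $\hat M_k \to M$ almost surely. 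Because $\hat M_k - M$ is a fixed matrix on each $\omega$, the resulting bound $|\hat\theta_k - \theta(P_k)|\le |\hat M_k - M|\, |\vecs(P_k)|$ delivers convergence of $\hat\theta_k - \theta(P_k)$ to zero uniformly on every compact set of $P$, as required by \cref{thm5}(ii).

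With this in hand, I would mimic the proof of \cref{thm6}: decompose
\begin{align*}
\mathcal{T}_A(\hat\theta_k) &= A^TP_k + P_kA + \Delta_{1,k}(P_k),\\
\mathcal{T}_B(\hat\theta_k) &= R^{-1}B^TP_k + \Delta_{2,k}(P_k),
\end{align*}
with $\Delta_{i,k}\to 0$ uniformly on compacts almost surely, so that \cref{alg5} reduces to \cref{alg1} with vanishing perturbation
\begin{align*}
\Delta_k = \Delta_{1,k}(P_k) + \Delta_{2,k}^T(P_k)R\Delta_{2,k}(P_k) + \Delta_{2,k}^T(P_k)B^TP_k + P_kB\Delta_{2,k}(P_k)
\end{align*}
and $W_k\equiv 0$. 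Applying \cref{thm5}(ii) pathwise on the probability-one event where the ergodic and martingale limits hold yields $P_k\to P^*$ with probability one, and $K_k = \mathcal{T}_B(\hat\theta_k) \to R^{-1}B^TP^* = K^*$ follows immediately by continuity of $\mathcal{T}_B$ and the convergence $\hat\theta_k\to\theta(P^*)$. The main obstacle is the upgrade from the $L^2$ statements \cref{ergo_equ1}--\cref{ergo_equ2} to almost sure, compact-uniform-in-$P$ convergence; this is where the linearity of $\theta(\cdot)$ in $P$ is essential, since it lets us reduce the problem to convergence of the deterministic coefficient matrix $\hat M_k$ alone.
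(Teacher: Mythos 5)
Your proposal is correct in outline, but it kills the stochastic error by a genuinely different mechanism than the paper. The paper defines $\Delta_k(P)=\hat\theta(P,t_k)-\theta(P)=2\bigl(\int_0^{t_k}\psi\psi^Tdt\bigr)^{-1}\int_0^{t_k}\psi x^TP\sum_i\sigma_i dw_{x,i}$, shows only that it converges to zero \emph{in mean square} (via \cref{ergo_equ1}--\cref{ergo_equ2}) and, using the Burkholder--Davis--Gundy inequality together with Birkhoff's theorem applied to the quadratic variation, that it has bounded variance; it then splits the update into a zero-mean finite-variance noise $\Delta_{1,k}$ plus a deterministic vanishing term $\Delta_{2,k}$ and invokes \cref{col3}, which absorbs the accumulated noise through the martingale convergence theorem and therefore needs the square-summability $\sum_k h_k^2<\infty$ of the step sizes. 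You instead upgrade the error to \emph{pathwise} almost-sure convergence: factoring $\hat\theta(P,t_k)=\hat M_k\vecs(P)$ by linearity and arguing $\hat M_k\to M$ a.s., so that \cref{alg5} becomes \cref{alg1} with $W_k\equiv 0$ and a perturbation vanishing uniformly on compacts, to which \cref{thm5}(ii) applies pathwise exactly as in the proof of \cref{thm6}. This route is valid: the strong law for continuous local martingales gives $\frac{1}{t}\int_0^{t}\psi x^TP\sum_i\sigma_i dw_{x,i}\to 0$ a.s., since Birkhoff's theorem and the finiteness of all moments of the Gaussian stationary law make the quadratic variation grow at most linearly (and on the event where it stays finite the martingale converges, so the normalized limit is still zero), while \cref{assum3} keeps $\bigl(\frac{1}{t_k}\int_0^{t_k}\psi\psi^Tdt\bigr)^{-1}$ eventually bounded. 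What you buy is the elimination of \cref{col3} and of the implicit condition $\sum_k h_k^2<\infty$; what you pay is having to justify the stronger a.s.\ statement, and you should drop the Borel--Cantelli fallback, since $\{t_k\}$ is prescribed by the algorithm and cannot be chosen to grow fast --- fortunately the martingale strong law makes that fallback unnecessary. The final steps (the decomposition of $\mathcal{T}_A(\hat\theta_k)$ and $\mathcal{T}_B(\hat\theta_k)$ and the conclusion $K_k\to K^*$) match \cref{thm6} and the paper's intent.
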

\begin{proof}
{\highlight
First denote 
\begin{align*}
\Delta_k(P):=\hat \theta(P,t_k)-\theta(P)=2\left(\int_0^{t_k}\psi\psi^Tdt\right)^{-1}\int_0^{t_k}\psi x^TP\sum_{i=1}^{q_x}\sigma_idw_{x,i}.
\end{align*}
Then, by \cref{ergo_equ1} and \cref{ergo_equ2}, we have 
\begin{align*}
\lim_{k\rightarrow \infty}\mathbb{E}^{\mathbb{P}}\left[\|\Delta_k(P)\|_2^2\right]=0.
\end{align*}
Since the above formulation is true for any real symmetric $P$, 
we know for any $P\in\mathcal{S}^n$, $\Delta_k(P)$ is a martingale (element wise), and converges to $0$ as $k$ goes to infinite, in the mean square sense.

By Burkholder-Davis-Gundy inequality \cite[Theorem 1.1]{Burkholder1972}, we have 
\begin{align*}
\mathbb{E}^{\mathbb{P}}\left[|\Delta_k^{i,j}(P)|^4\right]\leq C\mathbb{E}^{\mathbb{P}}\left[[\Delta^{i,j}(P)]_k^2\right]
\end{align*}
for some constant $C>0$, where $\Delta_k^{i,j}$ is the $(i,j)$-th element of $\Delta_k$, and $[\cdot]$ denotes the quadratic variation \cite[Section 8.6]{Steele2001}.
By Birkhoff's ergodic theorem and the fact that $(x,u)$ has finite $r$-th moment for any $r\in\mathbb{N}_+$, $\lim_{k\rightarrow\infty}[\Delta^{i,j}(P)]_k=0$ with probability one. 
Hence $\lim_{k\rightarrow\infty}\mathbb{E}^{\mathbb{P}}\left[[\Delta^{i,j}(P)]_k^2\right]=0$.
This implies that the variance of $\Delta_k^T\Delta_k$ is bounded.
}

Now,  the updating equation in \cref{alg5} is equivalent to
\begin{align*}
 P_{k+1/2}\gets P_k+h_k(\mathcal{R}(P_k)+\Delta_{1,k}(P_k)+\Delta_{2,k}(P_k)),
\end{align*}
where $\Delta_{1,k}(P_k)$ is a zero-mean  stochastic noise with finite variance for each $k$, and $\Delta_{2,k}(\cdot)$ is deterministic  and decreases to $0$ as $k$ goes to infinity.
The proof is then completed by \cref{col3}.
%
\end{proof}


\begin{remark}
It is possible to extend the results in this section to systems with both multiplicative and additive noises:
\begin{align*}
dx=Axdt+Budt+\sum_{i=1}^{q_x}\sigma_idw_{x,i}+\sum_{i=1}^{q_1}F_ixdw_{1,i}+\sum_{i=1}^{q_2}G_iudw_{2,i},
\end{align*}
where $F_i$ and $G_i$ are constant matrices.
In this case, the ARE is given as
\begin{align*}
A^TP^*+P^*A+\sum_{i=1}^{q_1}F_i^TP^*F_i-P^*B\left(R+\sum_{i=1}^{q_2}G_i^TP^*G_i\right)^{-1}B^TP^*+Q=0.
\end{align*}
The convergence of VI in this case is guaranteed using results in \cite[Theorem 3.3]{Bian2016b} and \cite{Ait-Rami2001}.
The robust VI and ADP algorithms similar to  \cref{alg1} and \cref{alg5} can be derived following the analysis given before.
\end{remark}



\subsection{Decentralized VI}\label{decom}
In previous sections, we have studied different types of optimal control problems for continuous-time linear systems.
A common feature in these results is that the optimal controller and value function can be obtained by solving a single ARE.
However, in some applications, including the non-zero-sum differential game and the robust ADP, the optimal solution is solved from a group of cascaded or coupled AREs/HJB equations.
Here, we present a decentralized VI framework for continuous-time linear systems based on the robust VI proposed in \Cref{RVI}.

{\highlight
For simplicity, let us consider a network of two agents, with each agent $i$, $i=1,2$, aiming at solve a linear optimal control problem (see \Cref{MP_SD}) defined by four matrices $(A_i,B_i,Q_i,R_i)$.
Obviously, if $(A_1,B_1,Q_1,R_1)$ and $(A_2,B_2,Q_2,R_2)$ are not dependent on each other, then each agent can solve its own optimal control problem without communicating with the other one.
However, assume now agent $i$'s system information $(A_i,B_i,Q_i,R_i)$ depends on agent $j$'s ($j\neq i$) optimal solution $(P_j^*,K_j^*)$ through a nonlinear relationship $\Delta_i(\cdot)$, and for security reason the two agents cannot exchange their system information $(A_i,B_i,Q_i,R_i)$, $i=1,2$, to each other, then it is no longer a trivial task how to solve $(P_i^*,K_i^*)$ in a decentralized manner.
Reformulating this problem mathematically, we focus on solving  the following two coupled AREs:}
\begin{align*}
0&=A_1^TP_1^*+P_1^*A_1-P_1^*B_1R_1^{-1}B_1^TP_1^*+Q_1+\Delta_1(P_1^*,P_2^*),\\
0&=A_2^TP_2^*+P_2^*A_2-P_2^*B_2R_2^{-1}B_2^TP_2^*+Q_2+\Delta_2(P_2^*,P_1^*),
\end{align*}
where  $(A_i,B_i,Q_i,R_i)\in\mathbb{R}^{n_i\times n_i}\times \mathbb{R}^{n_i\times m_i}\times\mathcal{S}^{n_i}_+\times\mathcal{S}^{m_i}_+$, $\Delta_1=\Delta_1^T$ and $\Delta_2=\Delta_2^T$ are two continuous nonlinear functions.

\begin{assumption}\label{assum4}
There exist four polynomials $\gamma_{i,j}\in{\highlight\mathcal{K}}$, $i,j=1,2$, such that\footnote{A function $\gamma:\mathbb{R}_+\rightarrow\mathbb{R}_+$ is of class ${\highlight\mathcal{K}}$, if it is continuous, strictly increasing, and $\gamma(0)=0$.}
\begin{align*}
|\tilde \Delta_1(P_1,P_2)|&\leq \gamma_{1,1}(|\tilde P_1|)+ \gamma_{1,2}(|\tilde P_2|),\\
|\tilde \Delta_2(P_2,P_1)|&\leq \gamma_{2,2}(|\tilde P_2|)+\gamma_{2,1}(|\tilde P_1|),
\end{align*}
 where $\tilde \Delta_1(P_1,P_2)= \Delta_1(P_1,P_2)-\Delta_1(P_1^*,P_2^*)$, $\tilde \Delta_2(P_2,P_1)= \Delta_2(P_2,P_1)-\Delta_2(P_2^*,P_1^*)$, $\tilde P_1= P_1- P_1^*$, and $\tilde P_2= P_2- P_2^*$.
\end{assumption}

\begin{remark}
\cref{assum4} holds widely in different control problems.
For example, in two-player non-zero-sum differential games, we have $A_1=A_2$ and 
\begin{align*}
\Delta_i(P_i,P_j)=P_jB_jR_j^{-1}R_{ij}R_j^{-1}B_j^TP_j-P_jB_jR_j^{-1}B_j^TP_i-P_iB_jR_j^{-1}B_j^TP_j,
\end{align*}
 where $i\neq j$ and $R_{ij}=R_{ij}^T>0$.
Also, in the robust ADP design for systems with unmatched disturbances \cite[Chapter 5.1.1.2]{Jiang2017}, we have $\Delta_1=0$ and 
\begin{align*}
\Delta_2(P_2,P_1)=P_2R_1^{-1}B_1^TP_1B_1+B_1^TP_1B_1R_1^{-1}P_2.
\end{align*}
Note that $\gamma_{i,j}$ may depend on $P_1^*$ and $P_2^*$.
\end{remark}


The following theorem provides a convergence analysis for the coupled DMREs using small-gain theory.

\begin{theorem}\label{thm8}
Under \cref{assum4},  there exist $\varepsilon>0$ and small enough $|\gamma_{i,j}|$, $i,j=1,2$, such that given $(P_1(0),P_2(0))$ in a $\varepsilon$-neighborhood of $(P_1^*,P_2^*)$, we have $\lim_{t\rightarrow\infty}P_1(t)=P_1^*$ and $\lim_{t\rightarrow\infty}P_2(t)=P_2^*$, where
\begin{align}
\dot P_1&=A_1^TP_1+P_1A_1-P_1B_1R_1^{-1}B_1^TP_1+Q_1+\Delta_1(P_1,P_2),\label{dmre_equ5}\\
\dot P_2&=A_2^TP_2+P_2A_2-P_2B_2R_2^{-1}B_2^TP_2+Q_2+\Delta_2(P_2,P_1).\label{dmre_equ6}
\end{align}
Moreover, if $B_i$ has full rank, the convergence result holds for any $\gamma_{i,j}$ by picking $Q_i$ and $R_i$ properly.
\end{theorem}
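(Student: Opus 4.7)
The plan is a small-gain argument on the error dynamics. Writing $\tilde P_i = P_i-P_i^*$ and $\tilde\Delta_i=\Delta_i(P_i,P_j)-\Delta_i(P_i^*,P_j^*)$ for $i\neq j$, subtraction of the coupled AREs from \cref{dmre_equ5}--\cref{dmre_equ6} puts each error equation into the form \cref{dmre_equ2} with disturbance $\tilde\Delta_i$. Hence \cref{thm1} and the Lyapunov construction from the proof of \cref{thm2}(ii) apply subsystem by subsystem.

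First, for each $i=1,2$ I would obtain a local Lyapunov function $V_i$ on a ball around $P_i^*$ satisfying the usual quadratic sandwich together with
\begin{align*}
\dot V_i(\tilde P_i)\leq -\alpha_{3,i}|\tilde P_i|^2+\alpha_{4,i}|\tilde P_i|\,|\tilde\Delta_i|,
\end{align*}
exactly as in \cref{thm2_pf2}. Because every $\gamma_{i,j}$ is a polynomial of class $\mathcal{K}$, it is dominated by a linear function on any unit ball; concretely, $\gamma_{i,j}(r)\leq \bar c_{i,j}\,r$ for $r\in[0,1]$, where $\bar c_{i,j}$ can be taken as the sum of the absolute values of the coefficients of $\gamma_{i,j}$ and hence is small whenever $|\gamma_{i,j}|$ is. Squaring the two inequalities of \cref{assum4} and applying Young's inequality then gives the local bound $|\tilde\Delta_i|^2\leq 2\bar c_{i,i}^2|\tilde P_i|^2+2\bar c_{i,j}^2|\tilde P_j|^2$.

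I would then combine these estimates in a composite Lyapunov function $V(\tilde P_1,\tilde P_2)=V_1(\tilde P_1)+\mu V_2(\tilde P_2)$, absorb the cross term $|\tilde P_i|\,|\tilde\Delta_i|$ by Young's inequality once more, and reduce $\dot V$ to a negative quadratic form in $(|\tilde P_1|,|\tilde P_2|)$ whose $2\times2$ coefficient matrix $M(\mu,\bar c_{i,j})$ is explicit in $\alpha_{3,i},\alpha_{4,i}$ and the $\bar c_{i,j}$. The standard small-gain balance $\mu\sim\alpha_{3,1}\alpha_{4,2}/(\alpha_{3,2}\alpha_{4,1})$ makes $M$ positive definite provided the gain products $\bar c_{i,j}\alpha_{4,i}/\alpha_{3,i}$ are small enough, which is precisely the smallness condition on $|\gamma_{i,j}|$ in the statement. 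For the full-rank case, \cref{thm4} lets me scale $Q_i=\lambda_iQ_{0,i}$ and $R_i=o(\lambda_i)R_{0,i}$ to drive the individual gains $\alpha_{4,i}/\alpha_{3,i}$ to zero, which absorbs any fixed $\bar c_{i,j}$ and removes the smallness requirement altogether.

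The main obstacle is ensuring forward invariance of the region on which all local bounds hold. I would address this by shrinking the initial $\varepsilon$ in the statement so that the starting sublevel set of the composite $V$ is strictly contained in the product of the local neighborhoods of $V_1$ and $V_2$; strict negativity of $\dot V$ on that sublevel set then traps the trajectory inside it, and Lyapunov's theorem yields local asymptotic stability of $(P_1^*,P_2^*)$.
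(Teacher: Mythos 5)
Your proposal is correct and follows essentially the same route as the paper: both derive the subsystem dissipation inequalities from \cref{thm2_pf2}, exploit the polynomial class-$\mathcal{K}$ structure of the $\gamma_{i,j}$ in \cref{assum4} so that the interconnection terms are locally dominated by the negative quadratic part, close the loop with a composite Lyapunov function and Young's inequality under a small-gain condition, and invoke \cref{thm4} to remove the smallness requirement when the $B_i$ have full rank. Your explicit weighting $V_1+\mu V_2$ and the sublevel-set trapping argument are slightly more detailed than the paper's presentation but not a different method.
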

\begin{proof}
Following the derivation of \cref{thm2_pf2}, there exist $\varepsilon>0$ and a Lyapunov function $V$, such that 
\begin{align*}
\dot V(\tilde P_1,\tilde P_2)&\leq-C_1 (|\tilde P_1|^2+|\tilde P_2|^2)+C_2|\tilde P_1||\tilde \Delta_1|+C_3|\tilde P_2||\tilde \Delta_2|\\
&\leq-C_1 (|\tilde P_1|^2+|\tilde P_2|^2)
+C_2|\tilde P_1|\sum_{j=1,2}\gamma_{1,j}(|\tilde P_j|)
+C_3|\tilde P_2|\sum_{j=1,2}\gamma_{2,j}(|\tilde P_j|)\\
&\leq-\frac{C_1}{2} (|\tilde P_1|^2+|\tilde P_2|^2)+C_4\sum_{i,j=1,2}\gamma_{i,j}^2(|\tilde P_j|),\quad \forall |\tilde P_1|<\varepsilon,\ |\tilde P_2|<\varepsilon,
 \end{align*}
where  $C_i>0$, $i=1,2,3,4$, are constants.
 Since $\gamma_{i,j}$  are polynomials, the second term on the right-hand side of the above inequality decrease to $0$ at least as fast as the first term.
 Hence, \cref{dmre_equ5} and \cref{dmre_equ6} are  asymptotically stable at $(P_1^*,P_2^*)$, as long as the gain of $\gamma_{i,j}$ is small enough.
 
Moreover, if $B_1$ has full rank, we know from  \cref{thm4} that \cref{dmre_equ5} can have an arbitrarily small linear $L^2$ gain from $\tilde \Delta_1$ to $\tilde P_1$, i.e., $C_2/C_1$ can be made sufficiently small, on any compact sets, by choosing $Q_1$ and $R_1$ properly.
If $|\gamma_{2,2}|$ is sufficiently small, then for any $\varepsilon>0$, we can find $Q_1$ and $R_1$, such that
\begin{align*}
\dot V(\tilde P_1,\tilde P_2)\leq&-C_1 (|\tilde P_1|^2+|\tilde P_2|^2)+C_2|\tilde P_1|\gamma_{1,1}(|\tilde P_1|)+\frac{C_2}{2}|\tilde P_1|^2+\frac{C_2}{2}\gamma_{1,2}^2(|\tilde P_2|)\\
&+\frac{C_1}{2}|\tilde P_2|^2+\frac{C_3^2}{2C_1}\gamma_{2,1}^2(|\tilde P_1|)+C_3|\tilde P_2|\gamma_{2,2}(|\tilde P_2|)\\
\leq&-C_5( |\tilde P_1|^2+ |\tilde P_2|^2),\quad  \forall|\tilde P_1|<\varepsilon,\ |\tilde P_2|<\varepsilon,
 \end{align*}
 for some $C_5>0$.
 This completes the proof.
\end{proof}

\begin{algorithm}[t]
\caption{Decentralized value iteration}
\label{alg6}
\begin{algorithmic}
\STATE For the $i$-th subsystem, choose $P_{i,0}=P_{i,0}^T\geq 0$. $k\gets 0$.
\LOOP
    \STATE $P_{i,k+1}\gets P_{i,k}+h_{i,k}(A_i^TP_{i,k}+P_{i,k}A_i-P_{i,k}B_iR_i^{-1}B_i^TP_{i,k}+Q_i+\Delta_i(P_{i,k},P_{j,k}))$
    \IF {$| P_{i,k+1}-P_{i,k}|/h_{i,k}<\bar\varepsilon$}
    \RETURN $P_{i,k}$ as an approximations to $P_i^*$
    \ENDIF
    \STATE $k\gets k+1$
    \ENDLOOP
 \end{algorithmic}
\end{algorithm}
Based on \cref{thm8}, we develop a coupled VI algorithm in \cref{alg6}.
The convergence of \cref{alg6} is given in the following theorem. 

{\highlight 
\begin{theorem}\label{thm9}
Under \cref{assum4}, suppose $B_1$ and $B_2$ have full rank.
If $\sup_k\{h_{i,k}\}$ is sufficiently small, then given $Q_{i,0}\in\mathcal{S}^{n_i}_+$ and $R_{i,0}\in\mathcal{S}^{m_i}_+$, for any $\varepsilon>0$, there exist $\lambda_i>0$, such that by selecting $Q_i=\lambda_i Q_{i,0}$ and $R_i=o(\lambda_i) R_{i,0}$, we have $\lim_{k\rightarrow \infty}P_{i,k}=P_i^*$, where $\{P_{i,k}\}_{k=0}^\infty$ is obtained from \cref{alg6} with $P_{i,0}\in \mathcal{S}^{n_i}_+\cap B_{\varepsilon}(P_i^*)$, and $i=1,2$.
\end{theorem}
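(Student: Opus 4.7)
The plan is to combine the continuous-time small-gain result in \cref{thm8} with the discrete-time interpolation argument used in the proof of \cref{thm5}. Specifically, I will view \cref{alg6} as a coupled perturbed Euler discretization of the continuous-time coupled DMRE in \cref{dmre_equ5}--\cref{dmre_equ6}, and then transfer the local exponential stability of the continuous-time flow to the discrete iterates.

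First, I would apply \cref{thm4} (or \cref{col1}) to each subsystem separately. Since $B_i$ has full rank, for any prescribed $\gamma_i>0$ one can pick $\lambda_i>0$ small enough so that, with $Q_i=\lambda_i Q_{i,0}$ and $R_i=o(\lambda_i) R_{i,0}$, the individual DMRE $\dot P_i = \mathcal{R}_i(P_i)+\Delta_i$ admits a linear $L^2$-gain less than $\gamma_i$ from the perturbation $\tilde\Delta_i$ to $\tilde P_i$ on the prescribed ball $\{P_i\in\mathcal{S}^{n_i}_+ : P_i\in B_\varepsilon(P_i^*)\}$. By \cref{thm1} and \cref{rem_lem0}, this yields two individual Lyapunov functions $V_1,V_2$ satisfying the quadratic sandwich bounds together with $\dot V_i(\tilde P_i) \le -C_{1,i}|\tilde P_i|^2 + C_{2,i}|\tilde P_i||\tilde\Delta_i|$ on suitable neighborhoods of $P_i^*$.

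Next, I would form the composite Lyapunov function $\bar V(\tilde P_1,\tilde P_2)=V_1(\tilde P_1)+V_2(\tilde P_2)$ and repeat the small-gain analysis exactly as in \cref{thm8}: using \cref{assum4} to bound $|\tilde\Delta_i|$ by $\gamma_{i,1}(|\tilde P_1|)+\gamma_{i,2}(|\tilde P_2|)$, completing the squares, and then choosing $\lambda_i$ small enough so that the cross-gain contributions are dominated by the quadratic negative terms. This gives
\begin{equation*}
\dot{\bar V}(\tilde P_1,\tilde P_2) \le -C_3\bigl(|\tilde P_1|^2+|\tilde P_2|^2\bigr),\quad |\tilde P_i|<\varepsilon,
\end{equation*}
for some $C_3>0$, and hence a local exponential stability property for the continuous-time coupled DMRE with a known Lyapunov function.

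Finally, I would translate this to the discrete setting by mimicking the proof of \cref{thm5}. Treat the update in \cref{alg6} as $P_{i,k+1}=P_{i,k}+h_{i,k}(\mathcal{R}_i(P_{i,k})+\Delta_i(P_{i,k},P_{j,k}))$, and construct the piecewise-constant interpolations $P_i^0(t)$ together with the shifted processes $P_i^k(t)$. Using $\bar V$ as the Lyapunov function, the up-crossing argument of \cref{thm5} part (i) shows that once $(P_{1,0},P_{2,0})$ lies in a small enough neighborhood of $(P_1^*,P_2^*)$ and $\sup_k h_{i,k}$ is small enough, the coupled iterates remain in the domain where $\dot{\bar V}<0$; and the relative compactness arguments of \cref{thm5} part (ii) show that any limit of the interpolated trajectories satisfies \cref{dmre_equ5}--\cref{dmre_equ6}. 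The second step of \cref{thm2} then yields $\lim_{k\to\infty}P_{i,k}=P_i^*$.

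The main obstacle is managing the coupling in the discrete phase. Unlike \cref{thm5} part (iii), where the dynamic uncertainty is governed by an auxiliary equation with an IOS Lyapunov function, here the two iterations are mutually perturbing in lockstep, with potentially different step sizes $h_{1,k},h_{2,k}$. The composite Lyapunov function $\bar V$ and the small-gain inequality above are what make the up-crossing and ODE-approximation arguments go through simultaneously for both subsystems; without the full-rank gain-assignment freedom given by \cref{thm4}, the cross-terms $\gamma_{1,2},\gamma_{2,1}$ could not in general be dominated, which is precisely why the full-rank assumption on $B_i$ is crucial.
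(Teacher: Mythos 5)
Your proposal is correct and follows essentially the same route as the paper: use the full-rank gain assignment of \cref{thm4}/\cref{col1} to make the coupled continuous-time DMRE \cref{dmre_equ5}--\cref{dmre_equ6} asymptotically stable on the prescribed ball via the small-gain/composite Lyapunov argument of \cref{thm8}, then keep the discrete iterates bounded by taking $\sup_k\{h_{i,k}\}$ small as in \cref{thm5} part (i), and conclude convergence by the ODE-approximation machinery of \cref{thm5}. The paper's proof is simply a terser version that cites \cref{col1} part (i), \cref{thm4}, and \cref{thm5} parts (i) and (iii) for the steps you spell out explicitly.
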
}
\begin{proof}
{\highlight 
First we show $\{P_{i,k}\}$ is bounded in $\mathcal{S}^{n_i}_+\cap B_{\varepsilon}(P_i^*)$.
By picking  $\lambda_i$ sufficiently small, we know from part (i) of \cref{col1} that the couple system \cref{dmre_equ5} and \cref{dmre_equ6} can be made asymptotically stable at $(P_1^*,P_2^*)$, with $P_i(0)\in \mathcal{S}^{n_i}_+\cap B_{\varepsilon}(P_i^*)$;
and also from \cref{thm4} that  $\varepsilon$ can be made arbitrarily large.}

{\highlight  Now, choosing $\sup_k\{h_{i,k}\}$ sufficiently small, we easily have from part (i) of \cref{thm5} that $\{P_{i,k}\}$ stays in $\mathcal{S}^{n_i}_+\cap B_{\varepsilon}(P_i^*)$. 
Then, the proof is completed by part (iii) of \cref{thm5}.}
\end{proof}

\begin{remark}
The results presented in this section can be extended in different directions, such as for large-scale networks with more than two nodes and decentralized VI under stochastic disturbance.
\end{remark}



\section{Illustrative practical examples}\label{simulate}
In this section, we provide three simulation examples to illustrate our robust VI algorithm.

\subsection{Mean-variance portfolio optimization}\label{portfolio}
In this example, we study the mean-variance portfolio optimization problem \cite{Zhou2000} using non-zero-sum differential game theory and the robust VI results obtained in Sections \Cref{modelerr} and \Cref{decom}.

Consider the price process of $N+1$ assets (or securities) traded continuously in a market \cite{Zhou2000}:
\begin{align*}
\frac{dS_0}{S_0}&=rdt,\\
\frac{dS_i}{S_i}&=b_idt+\sum_{j=1}^{n_i} \sigma_{ij}dw_j,\quad i=1,2,\cdots,N,
\end{align*}
where $S_0$ represents the price of a bond, $S_i$, $i=1,\cdots,N$, represent $N$ stocks, $r>0$ is the interest rate,  $b_i>0$ is the appreciation rate, and $\{\sigma_{ij}\}_{j=1}^{n_i}$ is the volatility of the $i$-th stock.
An investor's total wealth at time $t$, when holding $h_i(t)$ shares of the $i$-th asset,  is given as 
\begin{align*}
x(t)=\sum_{i=0}^N h_i(t)S_i(t).
\end{align*}
Then, 
\begin{align*}
dx=\left(rx + \sum_i (b_i-r)u_i\right)dt+\sum_{i,j}\sigma_{ij}u_idw_j,
\end{align*}
where $u_i:=h_iS_i$ denotes the total market value of the investor's wealth in the $i$-th bond/stock.
The design objective here is to find $u$ to 
\begin{enumerate*}[label={\alph*)}]
\item maximize the average return; and
\item  minimize the volatility of $x$.
\end{enumerate*}

Inspired by  \cite{Zhou2000}, instead of solving the above portfolio optimization problem directly, we consider an auxiliary multi-player non-zero-sum differential game composed with the following cost
\begin{align}
J_i(u)=\mathbb{E}\left[\int_0^\infty \left(Q_i\bar x^2+\sum_{j=1}^NR_{ij}\bar u_i\bar u_j\right)dt\right],\quad i=1,\cdots N,\label{sim1_equ1}
\end{align}
subject to
\begin{align*}
d\bar x=\left(r\bar x + \sum_i (b_i-r)\bar u_i\right)dt+\sum_{i,j}\sigma_{ij}\bar u_idw_j,
\end{align*}
where $\bar x=x-\gamma$, and $\gamma>0$ represents the tradeoff between the two objectives in the portfolio optimization problem.
A larger $\gamma$ means more weights on the average return, and a small $\gamma$ means more weights on the volatility.
Note that the first term in the integrand in \cref{sim1_equ1} is related to the variance of $\bar x$ (and hence $x$) at the steady state, and the second term guarantee that the shares for the $i$-th bond/stock do not diverge to the infinity. 

Since the volatilities of assets are usually difficult to estimate, we borrow the idea of stochastic robust optimal solution from \cite[Section 5]{Bian2016b}, by choosing sufficiently small $Q_i>0$ and $R_{ij}>0$  to guarantee the small-gain condition. 
Then, the above non-zero-sum differential game can be solved using \cref{alg6}, with $A_i=r$ and $B_i=b_i-r$.
Based on the desired expected return, $\gamma$ is chosen as $200$.
Once $\bar u_i^*:=-K_i^*\bar x$ is obtained, the optimal share of the $i$-th asset at time $t$ is chosen as $K_i^*(\gamma-\bar x(t))$.
Totally $20$ stocks and one bond are used to construct the portfolio.
The interest rate is chosen as $2.5\%$, and the appreciation rates are randomly selected from $0-15\%$.
Suppose the real values of these rates are unknown, and are estimated online using techniques developed in \Cref{modelerr}.
After $1000$ iterations, all $P_i^*$ converge to their optimal values.
The prices of the portfolio  is shown in \cref{fig_Portfolio}.
Note that the portfolio constructed using the non-zero-sum differential game approach has a higher return, while maintaining approximately the same volatility compared with the uniform allocation of the asset.

\begin{figure}[htbp]
        \centering
\includegraphics[width=\textwidth]{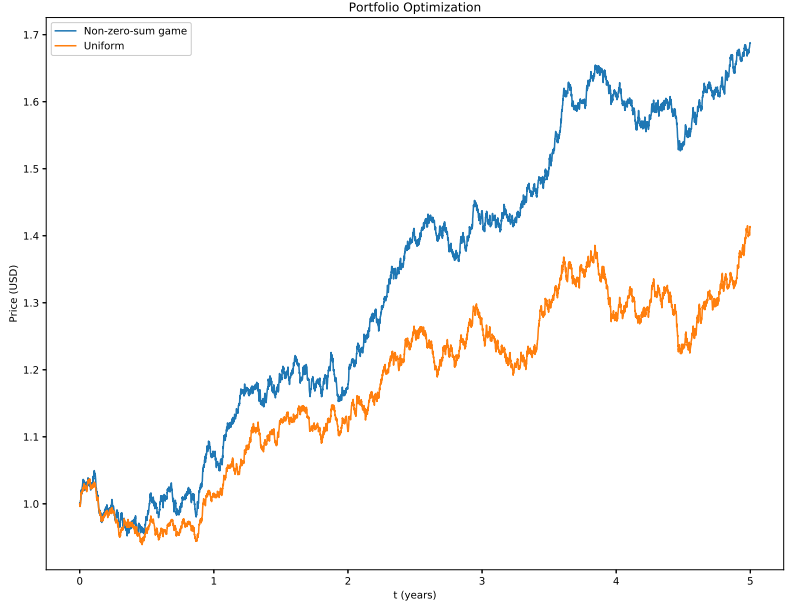}
\caption{Example \ref{portfolio}: Portfolio optimization solved from robust VI.}
\label{fig_Portfolio}
\end{figure}

\subsection{ADP learning for kinematic models}\label{kinematics}
In this example, we use the ADP method proposed in \Cref{VIADP} to develop an online learning mechanism for a class of kinematic models described as follows:
\begin{align*}
\dot p&=v,\\
m \dot v&=f-bv,\\
\tau \dot f&=u-f+w,
\end{align*}
where $p$, $v$, $f$ denote the  relative position to the origin, the velocity, and the actuator force, respectively; $u$ is the control input; 
$m$, $b$, and $\tau$ represent the mass, the viscosity constant, and the time constant, respectively;
and $w$ is an exploration noise used to facilitate the ADP learning.
Note that the above system can represent a large class of practical systems, including human motor system, autonomous vehicle model, power system, to name a few.


In practice, the state information collected from online data is usually corrupted by some observation noises.
As a result, instead of $(p,v,f)$, we assume only $(\hat p,\hat v,\hat f)$ is observed and used in the feedback control design and ADP learning:
\begin{align*}
\hat p&=p+\sigma_pw_p,\\
\hat v&=v+\sigma_vw_v,\\
\hat f&=f+\sigma_fw_f,
\end{align*}
where $\sigma_p$, $\sigma_v$, and $\sigma_f$ denote the noise magnitude; and 
$w_p$, $w_v$, and $w_f$ are independent random variable and follow standard Gaussian distribution.

\begin{table}
\centering
\caption{Parameters of the Kinematic Model.}
\begin{tabular}{ c c  l }
\cline{1-3}
Parameters &   Description & Value \\
\cline{1-3}
$m$                        &   Mass & $1$kg\\
$b$                        &  Viscosity constant & $1$N$\cdot$s$/$m \\
$\tau$                       &  Time constant & $0.1$s \\
$\sigma_p$                       &  Noise magnitude & $0.01$ \\
$\sigma_v$                       &  Noise magnitude & $0.02$ \\
$\sigma_f$                       &  Noise magnitude & $0.1$ \\
\cline{1-3}
\end{tabular}
\label{tab1}
\end{table}

The values of model parameters used in simulation are provided in  \cref{tab1}.
\cref{alg2} is applied online, and the control policy is updated in real time after every $0.02$s.
The weighting matrices in the cost are chosen as $Q=I_3$ and $R=I_1$.
The initial controller $u\equiv0$, i.e., only the exploration noise is injected into the system at the beginning.
The elements in $P_k$ are plotted in \cref{fig_App2P} for each $k$.
For comparison purpose, both the optimal solution $P^*$ and the near optimal solution $\hat P^*$ learned through ADP  are given below:
\begin{align*}
P^*=
\begin{bmatrix}
7.4044     & 1.4311  &0.1000\\
1.4311   & 0.3801  &0.0248\\
0.1000   & 0.0248  &0.0431\\
\end{bmatrix},\quad
\hat P^*=
\begin{bmatrix}
7.4560 & 1.5184 & 0.1084\\
1.5184 & 0.5117 & 0.0216\\
0.1084   & 0.0216  &0.0463\\
\end{bmatrix}.
\end{align*}
Obviously, $P^*$ and $\hat P^*$ are close to each other.
The system trajectories and input are given in \cref{fig_App2xu}.
Note that the system achieves the asymptotical stability in mean square sense.

\begin{figure}[htbp]
        \centering
\includegraphics[width=\textwidth]{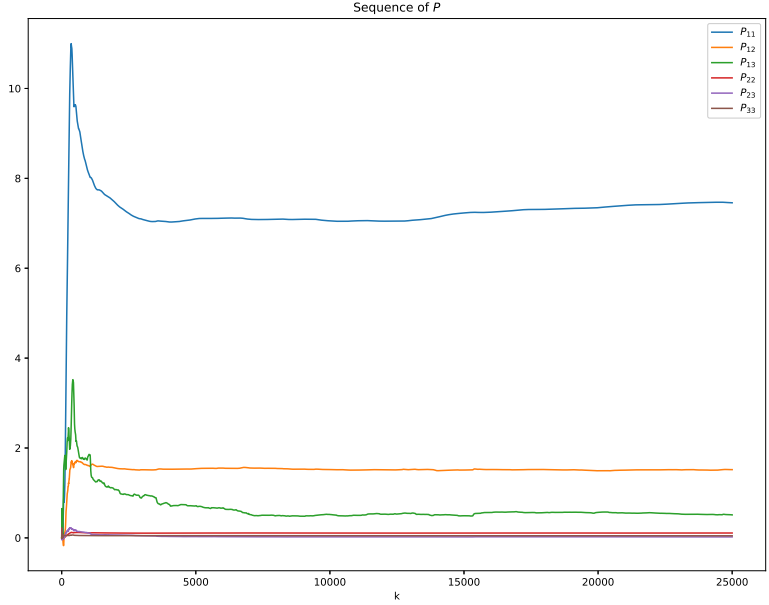}
\caption{Example \ref{kinematics}: elements of $P_k$.}
\label{fig_App2P}
\end{figure}

\begin{figure}[htbp]
        \centering
\includegraphics[width=\textwidth]{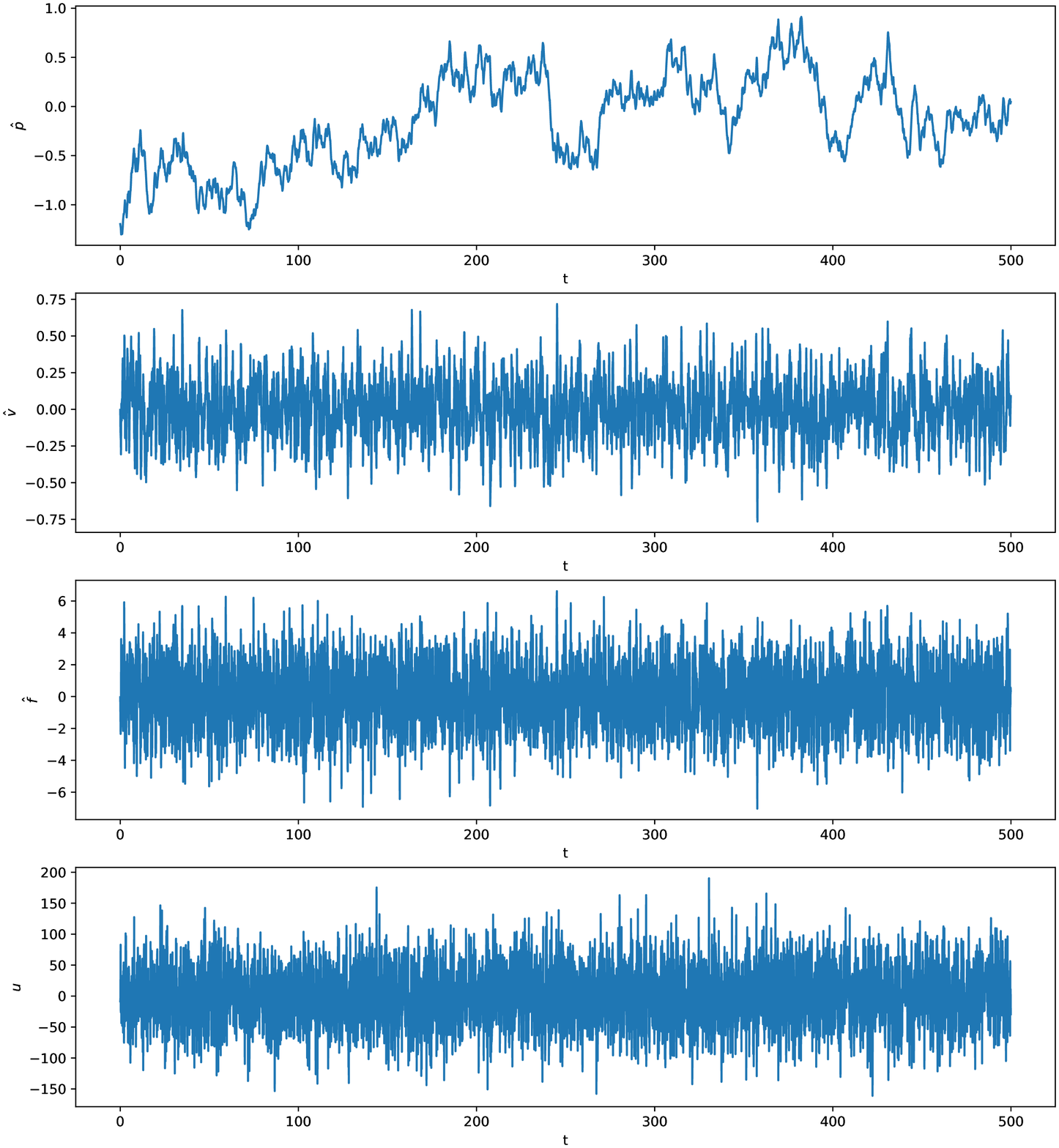}
\caption{Example \ref{kinematics}:  the trajectories of $\hat p$, $\hat v$, $\hat f$, and $u$.}
\label{fig_App2xu}
\end{figure}

%


\subsection{ADP for time-series variance minimization}\label{TSvar}
In this example, we use the ADP method developed in \Cref{stoADP} to study the variance minimization problem for a class of time-series with unknown parameters.
Note that this is a classical problem which has been studied in both finance and signal-processing community, and can be easily addressed using the Kalman filter, when the model parameters are known.

Consider the following time series in continuous-time:
\begin{align*}
\dddot S =\alpha_3 \ddot S + \alpha_2 \dot S + \alpha_1 S + \sigma_0v_0,
\end{align*}
where $ \sigma_0$ and $\alpha_i$, $i=1,2,3$, are unknown model parameters; and
$v_0$ is a Gaussian white noise that drives the output $S$.
Suppose the system is asymptotically stable in mean square sense.
Our objective here is to minimize the variance of $S$.

By rewriting the above differential equation in state space form, we have
\begin{align*}
d x_1 &= x_2 dt + \sigma_1d w_1 - \sigma_2w_2 dt,\\
d x_2 &= x_3 dt+ \sigma_2d w_2 - \sigma_3w_3 dt,\\
d x_3 &= \alpha_3 x_3 dt+  \alpha_2 x_2 dt+ \alpha_1 x_1 dt+  \sigma_3d w_3+ u +  \sigma_0d w_0,
\end{align*}
where $x_1=S+ \sigma_1w_1$, $x_2=\dot S+ \sigma_2w_2$, $x_3=\ddot S + \sigma_3w_3$;
$w_0=\dot v_0$;
 $w_i$, $i=0, 1,2,3$, are  Brownian motions representing the observation noises;
 $\sigma_i$, $i=1,2,3$, are unknown noise magnitudes;
  and $u$ is the control input.
  Note that even if $u\equiv0$, $\mathbb{E}x_i$, $i=1,2,3$, can decrease to $0$ asymptotically, since we assume the system is asymptotically stable in mean square sense.
  However, the variance of $x_i$ may be extremely large due to the presence of $\sigma_0v_0$.
  To reduce the variance of $x_i$, \cref{alg5} is used to develop an ergodic controller.
  Notice that by the law of large numbers, $\int_0^\infty w_i dt=0$ for all $i$.
  Hence, the two terms $\sigma_2w_2 dt$ and $\sigma_3w_3 dt$ have little influence in the time integration in \cref{alg5}.

In the simulation, we choose $\alpha_1=-4$, $\alpha_2=-1$, $\alpha_3=-4$, $\sigma_0=1$, $\sigma_1=0.6$, $\sigma_2=0.4$, and $\sigma_3=0.5$.
For illustration purpose, the weighting matrices in the cost are chosen as $Q=0.1I_3$ and $R=0.01I_1$.
$P_k$ is updated in real time after every $1$s.
The elements in $P_k$ are given in \cref{fig_App3P}.
Both the optimal solution $P^*$ and the near optimal solution  $\hat P^*$ from ADP learning are shown below:
\begin{align*}
P^*=
\begin{bmatrix}
0.2859  & 0.1492  & 0.0110\\
0.1492  & 0.3366  & 0.0539\\
0.0110  & 0.0539  & 0.0206\\
\end{bmatrix},\quad
\hat P^*=
\begin{bmatrix}
0.2854 &  0.1479 &  0.0106\\
0.1479 &  0.3377 &  0.0529\\
0.0106 &  0.0529 &  0.0262\\
\end{bmatrix}.
\end{align*}
The system trajectories are given in \cref{fig_App3xu}.
Note that the controller derived from \cref{alg5} significantly reduces the variance of the output signal.
\begin{figure}[htbp]
        \centering
\includegraphics[width=\textwidth]{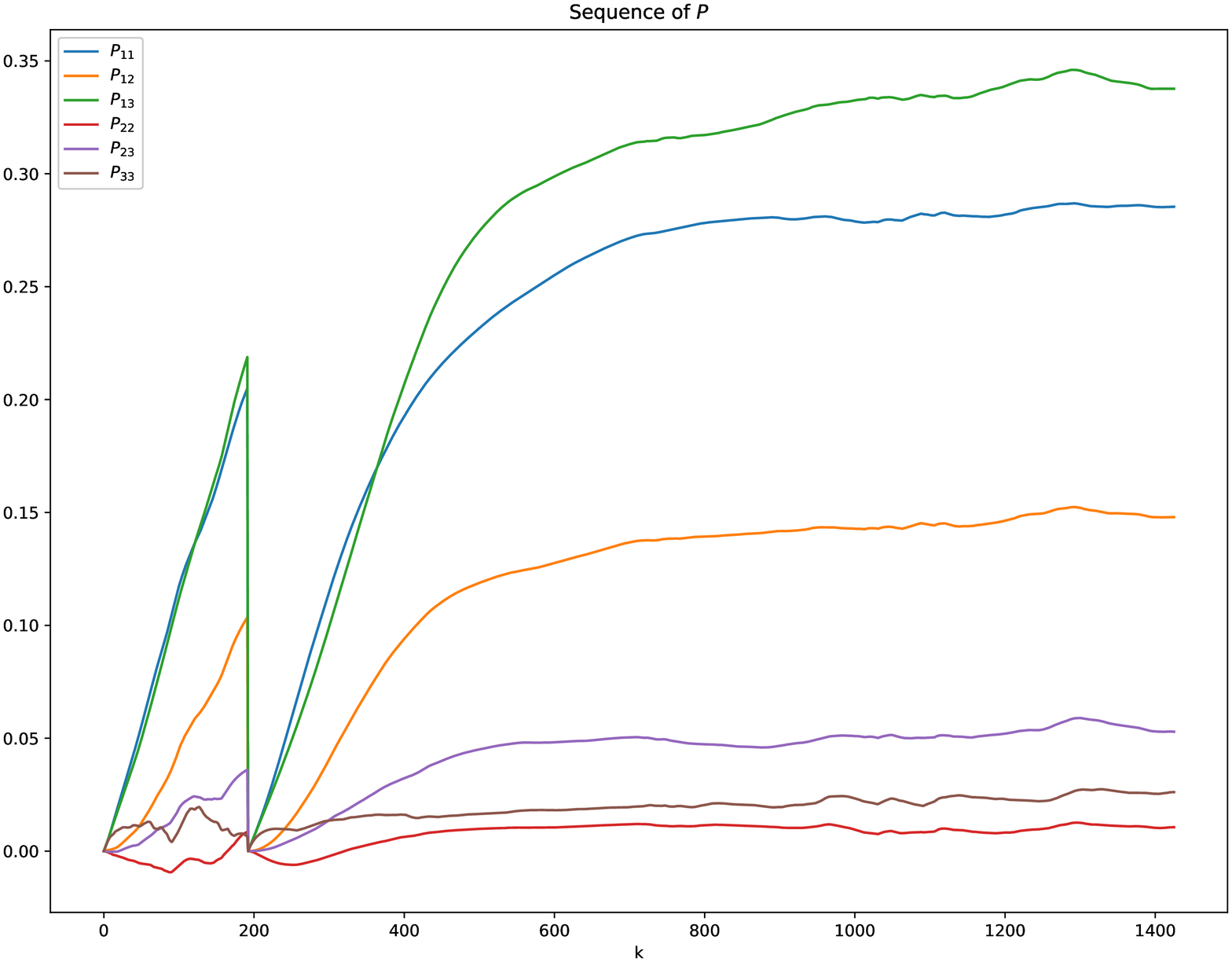}
\caption{Example \ref{TSvar}:  elements of $P_k$.}
\label{fig_App3P}
\end{figure}

\begin{figure}[htbp]
        \centering
\includegraphics[width=\textwidth]{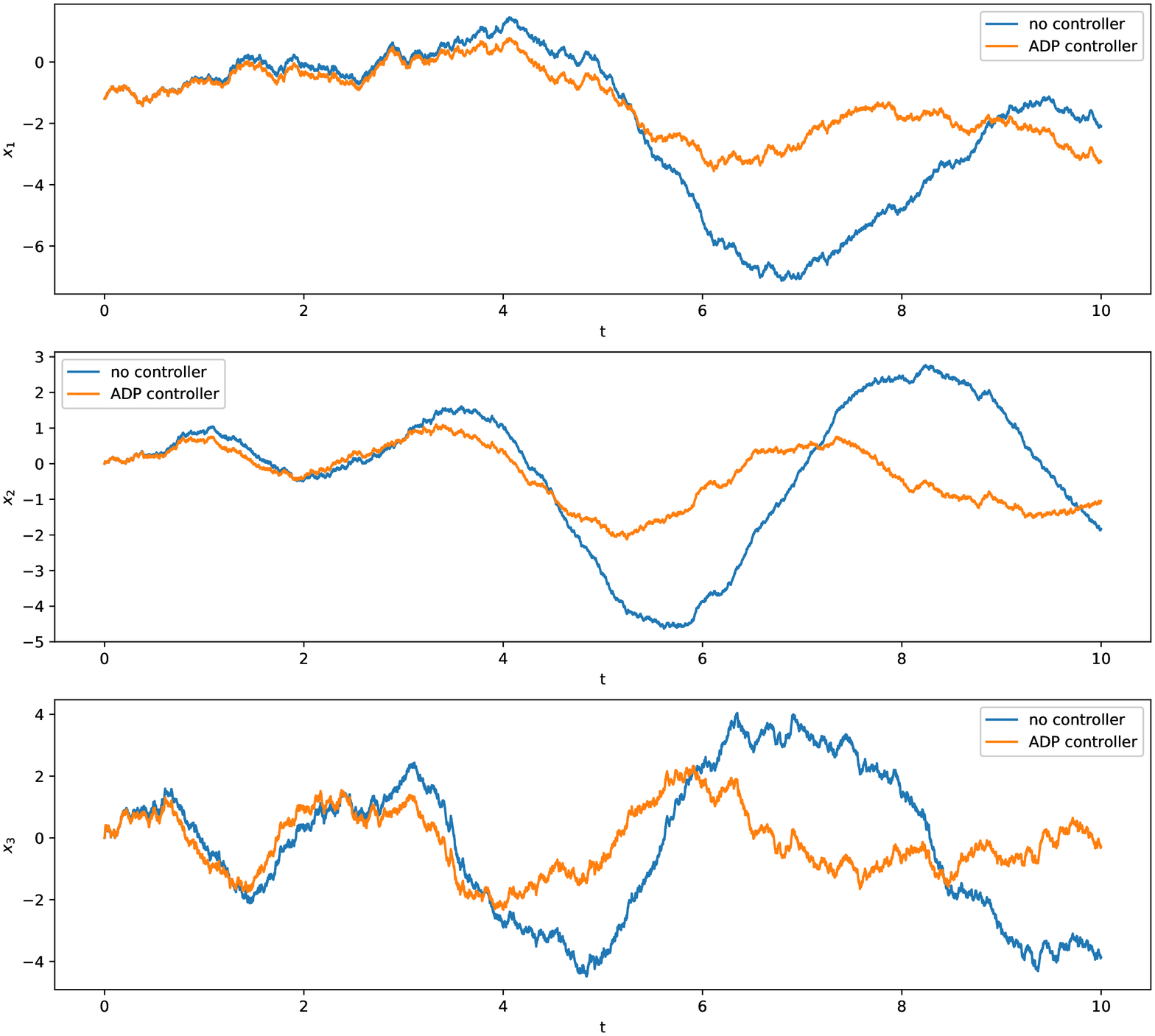}
\caption{Example \ref{TSvar}:  the trajectories of $x_i$, $i=1,2,3$.}
\label{fig_App3xu}
\end{figure}

\section{Summary and future work}

This paper develops a new framework of robust DP.
This novel theory resolves a long-standing issue in DP theory: how to develop DP algorithms that are robust to different types of disturbances?
Empowered by nonlinear and robust control theories, robust DP allows us to develop various DP and RL algorithms with guaranteed convergence to the optimal solution in the presence of different types of disturbances, including  stochastic noise,  external disturbances, and modeling errors such as  nonlinear dynamic uncertainties.
To be specific, we have conducted an innovative input-output gain analysis for the DMRE in \Cref{RVI}, and applied the  result together with the nonlinear small-gain theory to develop a novel robust VI algorithm.
It has been shown that this new algorithm is robust to different kinds of internal and external disturbances, and hence is especially useful in solving non-model-based optimal control problems.

Due to space limitations, we only list a few illustrative applications of our robust DP method in \Cref{sec_applicaiton}.
These examples have demonstrated that robust DP obtained in the present paper is a powerful tool for addressing adaptive optimal control and DP problems.
Last but not least, we point out several additional research topics that deserve further investigations in the future  on the basis of robust DP:
\begin{itemize}
\item {\it Robust PI.}
This paper mainly focuses on developing the robust VI.
Due to the popularity of the PI algorithm in real-world decision making applications, it is important to develop similar robustness analysis results for the PI.
\item {\it Multi-level ADP learning.}
The convergence of ADP algorithms relies heavily on the well-posedness of the cost functional.
However, it may not be easy to identify such a ``qualified'' cost in practice.
One way of solving this problem is to update the cost functional at the same time when the ADP learning is performed.
The convergence of this multi-level learning algorithm can be analyzed using our robust DP framework.
\item {\it Neural network-based ADP methods.}
Robust DP can also play an important role in analyzing the convergence of nonlinear ADP methods with neural network approximation.
The  error induced from neural network approximation can be regarded as an external input to the robust DP algorithm.
Then the gain analysis can be conducted to quantify the influence of such approximation error.
\item {\it Robust ADP learning under unknown disturbance.}
Robust ADP aims at developing a robust adaptive optimal controller for an interconnected system subject to dynamic uncertainty.
A potential drawback of previous robust ADP algorithms is that the disturbance input must be accessible during the learning process. 
This restrictive assumption can be removed with the help of the proposed robust DP methodology.
\item {\it DP and ADP methods for delayed systems.}
The time delay can be handled as a special type of dynamic uncertainty with the unity gain. 
Since robust DP provides a new way of conducting the convergence analysis from a nonlinear small-gain perspective, it can play a vital role in handling the adaptive optimal control design with delayed input and state information.
\item {\it Parallel  and decentralized ADP methods.}
\Cref{decom} has developed a decentralized VI algorithm that is shown especially useful in solving large-scale DP problems and differential games.
Our future work will be directed at developing the model-free counterpart of \cref{alg6}, and extending this result to more general scenarios.
\end{itemize}

\section*{Acknowledgements}
The work of Z.P. Jiang has been supported partially by the National Science Foundation under Grants ECCS-1230040 and ECCS-1501044.

\bibliographystyle{alpha}
\bibliography{VIbib.bib}
\end{document}